\providecommand{\keywords}[1]{\textbf{\textit{Keywords--}} #1}
\providecommand{\subjclass}[1]{\textbf{\textit{MSC 2020---}} #1}
\definecolor{darkolivegreen}{rgb}{0.33, 0.42, 0.18} 
\definecolor{cobalt}{rgb}{0.0, 0.24, 0.43}
\newcounter{comments}
\newenvironment{displaycomment}{\begin{list}{}{\rightmargin=1cm\leftmargin=1cm}\item\sf\begin{small}}{\end{small}\end{list}}
\newcommand{\C}{\mathbb{C}}
\newcommand{\R}{\mathbb{R}}
\newcommand{\Z}{\mathbb{Z}}
\newcommand{\mc}[1]{\mathcal{#1}}
\newcommand{\cstar}{C$^{*}$}
\newcommand{\Pol}{\mathrm{Pol}}
\newcommand{\pullb}{\star}
\newcommand*{\defeq}{\mathrel{\vcenter{\baselineskip0.5ex \lineskiplimit0pt
\hbox{\scriptsize.}\hbox{\scriptsize.}}}%
=}
\newcommand{\ph}{\varphi}
\newcommand{\GL}{\operatorname{GL}}
\newcommand{\U}{\operatorname{U}}
\newcommand{\Cl}{\operatorname{Cl}}
\newcommand{\gra}{\operatorname{graph}}
\renewcommand{\1}{\mathds{1}}
\newcommand{\Sp}{\operatorname{Sp}}
\newcommand{\bose}{H^{\mathrm{b}}}
\newcommand{\fermi}{H^{\mathrm{f}}}
\newcommand{\HS}{\mc{B}_{2}} 
\newcommand\blfootnote[1]{%
  \begingroup
  \renewcommand\thefootnote{}\footnote{#1}%
  \addtocounter{footnote}{-1}%
  \endgroup
}
\newcommand{\sphere}{\overline{\mathbb{C}}}
\newcommand{\disk}{\mathbb{D}}
\renewcommand{\O}{\operatorname{O}}
\renewcommand{\hat}[1]{\widehat{#1}}
\def\thm@space@setup{%
\thm@preskip=0.6em
\thm@postskip=0.00em
}
\declaretheoremstyle[
headfont=\normalfont\bfseries,
bodyfont=\normalfont,
qed={$\triangle$},
]{definitionstyle}
\declaretheorem[
style=definitionstyle,
title=Definition,
refname={Definition,Definitions},
Refname={Definition,Definitions},
numberwithin=section,
]{definition}
\declaretheorem[
style=definitionstyle,
title=Remark,
refname={Remark,Remarks},
Refname={Remark,Remarks},
sibling=definition,
]{remark}
\declaretheorem[
style=definitionstyle,
title=Example,
refname={Example,Example},
Refname={Example,Examples},
sibling=definition,
]{example}
\declaretheorem[
style=definitionstyle,
title=Exercise,
refname={Exercise,Exercises},
Refname={Exercise,Exercises},
sibling=definition,
]{exercise}
\theoremstyle{plain}
\newtheorem{theorem}[definition]{Theorem}
\newtheorem{proposition}[definition]{Proposition}
\newtheorem{lemma}[definition]{Lemma}
\newtheorem{corollary}[definition]{Corollary}
\setlist{topsep=-0.2em,itemsep=0em}
\crefname{enumi}{\unskip}{\unskip}
\title{Grassmannians of Lagrangian Polarizations}
\author{Peter Kristel\footnote{Hausdorff Center for Mathematical Sciences, Bonn; pkristel@gmail.com}\; \& Eric Schippers\footnote{University of Manitoba, Winnipeg; eric.schippers@umanitoba.ca}}
\date{\today}
\renewcommand{\thefootnote}{\roman{footnote}}
\begin{document}

\maketitle

\tableofcontents

\abstract{This paper is an introduction to polarizations in the symplectic and orthogonal settings. They arise in association to a triple of compatible structures on a real vector space, consisting of an inner product, a symplectic form, and a complex structure. A polarization is a decomposition of the complexified vector space into the eigenspaces of the complex structure; this information is equivalent to the specification of a compatible triple.  When either a symplectic form or inner product is fixed, one obtains a Grassmannian of polarizations.
We give an exposition of this circle of ideas, emphasizing the symmetry of the symplectic and orthogonal settings, and allowing the possibility that the underlying vector spaces are infinite-dimensional. This introduction would be useful for those interested in applications of polarizations to representation theory, loop groups, complex geometry, moduli spaces, quantization, and conformal field theory.}\blfootnote{\keywords{Lagrangian Grassmannian, polarizations, symplectic form, inner product, Siegel disk, symplectic group, orthogonal group}}\blfootnote{\subjclass{15A66, 15B77, 32G15, 53D99, 81S10}}

\section{Introduction}

This paper is a self-contained introduction to polarizations of complex vector spaces and Lagrangian Grassmannians of polarizations. These appear in geometry and algebra in many contexts, such as algebraic and complex geometry \cite{farkas_riemann_1992,siegel_topics_1988,birkenhake_complex_2004}, moduli spaces \cite{nag_teichmuller_1995,siegel_topics_1988}, loop groups \cite{segal_unitary_1981,pressley_loop_2003}, and the metaplectic and spin representations \cite{plymen_spinors_1994,habermann_introduction_2006}. They also appear in physics in association with the latter objects \cite{woit_quantum_2017,ottesen_infinite_1995}, and in conformal field theory \cite{huang_two-dimensional_1995,bleuler_definition_1988}. The paper covers the two types of polarization: the Riemannian one, in which the polarization is an orthogonal decomposition; and the symplectic one, in which the polarization is a symplectic decomposition.

Our presentation is suitable for graduate students. We also hope that seasoned researchers might find it useful as a quick introduction. It could serve as preparation or as a reference for those encountering polarizations and Grassmannians in any of the topics above. Although these topics are not treated here, we included many examples with references for orientation. 

Throughout, we have emphasized the symmetry of the Riemannian and symplectic point of view. We therefore chose a ``middle ground'' notationally, sampling from common notation in both fields, as well as from complex geometry. 

At the heart of the idea of polarization is a triple of compatible structures: a complex structure, an inner product, and a symplectic form. An familiar example is that of a K\"ahler manifold, which is a manifold equipped with three compatible structures: 1) a Riemannian metric; 2) an integrable complex structure; 3) a symplectic form. Here the three structures vary from point to point on the manifold. 

The compatibility condition ensures that when we are provided with two out of the three structures, we can reconstruct the third one, if it exists (which is not always the case). 
It then becomes natural to fix \emph{one} of these structures, and study the space of structures of a second type, which allow for the reconstruction of a compatible structure of the third type.
For example, we can fix a symplectic manifold $(M, \omega)$, and study the space of integrable complex structures on $M$, such that $M$ admits a compatible Riemannian metric. 

In this article we restrict our attention to structures on a fixed real vector space $M=H$. (In the example of K\"ahler manifold, one could restrict to a particular tangent space, or, one could consider the case that the manifold is just a complex vector space). In particular we do not discuss integrability.  Instead, we consider deformations of these structures on the fixed vector space, or equivalently, the Grassmannian of polarizations on the complexification of $H$.  We explore different models of the Grassmannians, and their manifold structures.
We also simultaneously generalize the setting, by allowing (and indeed focusing on) the case that $H$ is infinite-dimensional. Doing this requires only a little extra effort, and immediately broadens the scope. 
Some functional analytic issues rear their head when passing to the infinite-dimensional case, but these are easily dealt with.  
There are many interesting examples which fit in this context, many of which appear in the text. In general, most of the examples are completely accessible, but in a few cases where details would be distracting we indicate the relevant literature. 

Here is an outline of the paper.
In \cref{sec:Triples} we recall the definition of compatible triples, and explore their basic properties and characterizations. We also take care of a few functional analytic issues necessary when dealing with the case of (infinite-dimensional) Hilbert spaces.  

In \cref{sec:Polarizations}, we consider both types of polarization. Briefly, 
if $J$ is a complex structure on $H$, then the complexification $H_{\C}$ splits as a direct sum of the $\pm i$-eigenspaces of $J$.
This establishes a correspondence between certain types of decompositions of $H_{\C}$, and complex structures on $H$.
We exploit this relation to phrase our problem in the language of Grassmannians, in anticipation of Sections \ref{sec:Bosons} and \ref{sec:fermions}.
If $g$ is a Riemannian metric, and $J$ is orthogonal, then the corresponding decomposition of $H_{\C}$ is orthogonal.
If $\omega$ is a symplectic form, and $J$ is a symplectomorphism, then the corresponding decomposition of $H_{\C}$ is Lagrangian. 
 
A Lagrangian, resp.~orthogonal decomposition of $H_{\C}$ is part of the data required to carry out bosonic, resp.~fermionic geometric quantization.
In the current context, this procedure produces a representation of the Heisenberg, resp.~Clifford algebra of $H$ on the Fock space of the chosen subspace of $H_{\C}$.
We give a cursory overview of this construction in \cref{sec:HilbertSchmidt} for the sake of motivation.
For us, the most important point is that considerations of unitary equivalence of representations lead to a natural functional analytic condition, which picks out a subset of each of the Grassmannians under consideration. These are the ``restricted'' Grassmannians of representation theory. 

In \cref{sec:Bosons,sec:fermions} we describe the symplectic and orthogonal Grasmannians respectively, giving different models and providing a logically complete exposition of their basic properties. 
These Grassmannians have the desirable property that they can be equipped with the structure of complex manifold. 
We give a detailed construction of an atlas of these manifolds.  In the restricted case, this is modelled on the Hilbert space of Hilbert-Schmidt operators on some infinite-dimensional Hilbert space. 

Section \ref{se:sewing} explores a geometric interpretation of a particular Lagrangian Grassmannian in terms of sewing. This example arises in loop groups, conformal field theory, and Teichm\"uller theory.  Finally, Section \ref{se:solutions} contains a complete set of solutions to the exercises. 

\paragraph{Acknowledgements}

PK gratefully acknowledges support from the Pacific Institute for the Mathematical Sciences, and from the Hausdorff Center for Mathematics.  ES acknowledges the support of the Natural Sciences and Engineering Research Council of Canada (NSERC).

\section{Compatible triples}\label{sec:Triples}

In this section, we introduce the notion of compatible triples. This is the information of an inner product, symplectic form, and complex structure, which are related in a sense defined shortly. We will see various equivalent ways of expressing this compatibility. The ubiquity of compatible triples is illustrated with examples.  
As promised, we give an exposition in the generality of Hilbert spaces. 

We begin with a few considerations about Hilbert spaces.
Those concerned only in the finite-dimensional case could skip this first page, and ignore a few extra lines of justification in some of the proofs, without losing the thread. This could profitably be maintained until the restricted Grassmannians are considered starting in Section \ref{sec:Bosons}.  

Let $H$ be a separable (possibly even finite-dimensional) real Hilbert space.
That is, $H$ is a separable topological vector space over $\R$, which admits an inner product with respect to which it is a Hilbert space.
We recall the following three notions:
\begin{itemize}\itemsep-.2em
	\item A \emph{strong inner product} on $H$ is a continuous symmetric bilinear form on $H$, such that the map $\ph_{g}:H \rightarrow H^{*}$ defined by the relation $\ph_{g}(v)(w) = g(v,w)$ is an isomorphism, and such that $g(v,v) \geqslant 0$ for all $v \in H$.
	\item A \emph{complex structure} $J$ on $H$ is a continuous map $J: H \rightarrow H$ with the property that $J^{2} = -\1$. 
	\item A \emph{strong symplectic form} on $H$ is a continuous anti-symmetric bilinear form $\omega$ on $H$, such that the map $\ph_{\omega}:H \rightarrow H^{*}$ defined by the relation $\ph_{\omega}(v)(w) = \omega(v,w)$ is an isomorphism.
\end{itemize} 
\vspace{.4em}
The maps $\ph_{\omega},\ph_{g}: H \rightarrow H^{*}$ are sometimes called the musical isomorphisms.
Note that here, by isomorphism, we mean a bounded linear bijection. Thus the inverse must also be bounded. Of course in finite dimensions a linear bijection is automatically bounded.

    When a vector space is equipped with a symplectic form $\omega$, one may consider the \emph{canonical commutation relation} algebra of $H$.
    This is the unital, associative algebra generated by $H$ subject to the condition $vw - wv = \omega(v,w)$.
    Similarly, when one is given an inner product, one may consider the \emph{canonical anti-commutation relation} algebra of $H$.
    This is the unital, associative algebra generated by $H$ subject to the condition $vw + wv = g(v,w)$.
    Consideration of (certain variations of) these algebras and their modules motivates many of the definitions and results outlined in this article.
    We will give a sketch of some of this motivation in \cref{sec:HilbertSchmidt} ahead.

If $(H,g)$ is a Hilbert space, then $g$ is a strong inner product. In general, a strong inner product gives Hilbert space structure equivalent to $g$, as the following proposition shows.

\begin{proposition}\label{pr:StrongEquivalence}
    Let $(H,g)$ be a Hilbert space. Let $\psi: H \times H \rightarrow \R$ be a bilinear pairing. Then $\psi$ is a strong inner product if and only if $(H,\psi)$ is a Hilbert space and $\psi$ and $g$ are equivalent.
\end{proposition}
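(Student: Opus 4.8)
The plan is to prove the two implications by translating freely between the three pieces of data attached to a symmetric bilinear form $\psi$ on $H$: continuity with respect to $\|\cdot\|_g$, the map $\ph_\psi$ being a Banach-space isomorphism, and positivity. Throughout I will be careful that $H^*$ carries the norm dual to the fixed norm $\|\cdot\|_g$, so that all operator and dual norms are computed relative to $g$; and I will use ``equivalent'' in the sense that the two induced norms are comparable (equivalently, that the identity map is a topological isomorphism).

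For the forward implication, assume $\psi$ is a strong inner product. I would first note that $\psi$ is in fact positive \emph{definite}: the Cauchy--Schwarz inequality is valid for any positive semidefinite symmetric bilinear form, so $\psi(v,v)=0$ forces $\psi(v,\cdot)=0$, i.e. $\ph_\psi(v)=0$, and hence $v=0$ by injectivity of $\ph_\psi$. Next I would extract the equivalence of norms. Continuity of $\psi$ supplies a constant $C$ with $\|v\|_\psi^2=\psi(v,v)\leqslant C\|v\|_g^2$, which is one half of the equivalence. For the other half, boundedness of $\ph_\psi^{-1}$ gives $m>0$ with $m\|v\|_g\leqslant\|\ph_\psi(v)\|_{H^*}$, while Cauchy--Schwarz for $\psi$ gives $\|\ph_\psi(v)\|_{H^*}=\sup_{\|w\|_g\leqslant 1}|\psi(v,w)|\leqslant\sqrt{C}\,\|v\|_\psi$; chaining these yields $\|v\|_g\leqslant(\sqrt{C}/m)\|v\|_\psi$. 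Thus $\|\cdot\|_\psi$ is equivalent to $\|\cdot\|_g$, so $(H,\|\cdot\|_\psi)$ is complete, and since $\psi$ is a positive-definite symmetric form inducing $\|\cdot\|_\psi$, $(H,\psi)$ is a Hilbert space equivalent to $(H,g)$.

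For the converse, assume $(H,\psi)$ is a Hilbert space with $\|\cdot\|_\psi$ equivalent to $\|\cdot\|_g$, say $c\|v\|_g\leqslant\|v\|_\psi\leqslant C\|v\|_g$. Positivity is automatic, and continuity follows from Cauchy--Schwarz for $\psi$: $|\psi(v,w)|\leqslant\|v\|_\psi\|w\|_\psi\leqslant C^2\|v\|_g\|w\|_g$. It remains to see that $\ph_\psi$ is a Banach-space isomorphism. By the Riesz representation theorem applied to the Hilbert space $(H,\psi)$, the map $\ph_\psi$ is a bijection onto $H^*$. Boundedness is the estimate $\|\ph_\psi(v)\|_{H^*}=\sup_{\|w\|_g\leqslant 1}|\psi(v,w)|\leqslant C^2\|v\|_g$. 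For boundedness of the inverse, given $f\in H^*$ put $v=\ph_\psi^{-1}(f)$; then $\|v\|_\psi^2=\psi(v,v)=f(v)\leqslant\|f\|_{H^*}\|v\|_g\leqslant c^{-1}\|f\|_{H^*}\|v\|_\psi$, so $\|v\|_g\leqslant c^{-1}\|v\|_\psi\leqslant c^{-2}\|f\|_{H^*}$.

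The one step I expect to require genuine care is the reverse norm bound in the forward implication: one must combine the purely one-sided information coming from boundedness of $\ph_\psi^{-1}$ with Cauchy--Schwarz for a form that is, at that stage, known only to be positive semidefinite, and one must be scrupulous about the fact that the dual norm on $H^*$ is the one attached to $\|\cdot\|_g$. Everything else is a routine unwinding of the definitions of ``strong inner product'' and ``equivalent''.
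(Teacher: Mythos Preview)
Your proof is correct and follows essentially the same route as the paper: in the forward direction you use continuity for one norm inequality and combine the lower bound coming from invertibility of $\ph_\psi$ with Cauchy--Schwarz for the other, while in the converse you invoke Riesz on $(H,\psi)$ together with the norm equivalence. You are somewhat more explicit than the paper (e.g.\ isolating positive definiteness and writing out the inverse bound in the converse rather than appealing to equivalence of the dual norms), but the underlying argument is the same.
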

\begin{proof}
    Denote by $\| w \|_\psi$ the norm $\sqrt{ \psi(w,w)}$. By assumption there is a $K$ such that $\| w \|_\psi \leq K \| w\|$ for all $w$, and in particular $\| w \|_\psi$ is finite for any fixed $w$.  
    
    Since $\ph_\psi$ is an isomorphism, we have that it is bounded below.  
    Using this together with the Cauchy-Schwarz inequality for $\psi$ we get 
    \begin{equation*}
      C \| w \|  \leq \| \ph_{\psi}(w) \| = \sup_{\| v \|=1 } |\psi(w,v)| 
      \leq \| w \|_\psi \| v\|_\psi \leq K \| w\|_{\psi}.       
    \end{equation*}
    This shows that $(H,\psi)$ is complete and thus a Hilbert space, and that the norms are equivalent.  

    Conversely, if $(H,\psi)$ is a Hilbert space and the norms induced by $\psi$ and $g$ are equivalent, then they are also equivalent on $H^*$. Since  $\ph_\psi:(H,\psi) \rightarrow (H^*,\psi)$ is an isomorphism by the Riesz representation theorem, it is also a bounded isomorphism with respect to $g$.  
\end{proof}
Emboldened by \cref{pr:StrongEquivalence} we occasionally refer to a Hilbert space without designating a particular metric as special.

  It is worth pausing here to compare the infinite- and finite-dimensional settings.
    If $\psi: H \times H \rightarrow \R$ is a bilinear map, then $\psi$ is \emph{weakly non-degenerate} if the induced map $\ph_{\psi}: H \rightarrow H^{*}$ defined by $\ph_{\psi}(v)(w) = \psi(v,w)$ is injective.
    This is equivalent to the statement that for every $0 \neq v \in H$ there exists a $w \in H$ such that $\psi(v,w) \neq 0$.
    The map $\psi$ is \emph{strongly non-degenerate} if the induced map $\ph_{\psi}: H \rightarrow H^{*}$ is an isomorphism.
    In finite dimensions, these notions coincide, but if $H$ is infinite-dimensional, this is no longer true as \cref{ex:WeakInnerProduct} will show.
    On the other hand, in finite dimensions an inner product is automatically strong on account of its positive-definiteness.  
 
\begin{example}\label{ex:WeakInnerProduct}
    Consider the Hilbert space $\ell^{2}(\R)$ of square-summable sequences, with its standard inner product $g( \{ a_{n} \}, \{ b_{n} \}) = \sum_{n \geqslant 1} a_{n}b_{n}$.
    We equip $\ell^{2}(\R)$ with the symmetric bilinear map $\psi(\{a_{n}\},\{b_{n}\}) = \sum_{n \geqslant 1} a_{n}b_{n}/n$.
    The map $\psi$ is certainly weakly non-degenerate.
    It is however, not strongly non-degenerate, because $\ph_{\psi}$ is not surjective.
    Indeed, let $\xi: H \rightarrow \R$ be the map $\xi(\{a_{n} \}) = \sum_{n \geqslant 1} a_{n}/n$.
    The pre-image $\{ x_{n} \}$ of $\xi$ under the map $\ph_{\psi}$ (if it existed) would have to satisfy
    \begin{equation*}
        \sum_{n \geqslant 1} a_{n}/n = \psi(x_{n},a_{n}) = \sum_{n \geqslant 1} x_{n} a_{n} / n,
    \end{equation*}
    for all $\{ a_{n} \} \in \ell^{2}(\R)$.
    It follows that $x_{n} =1$ for all $n$, but this sequence is not square-summable.
\end{example}
	We will not be concerned with weakly non-degenerate maps, so from now on, we assume that any inner product or symplectic form is strong.
	The term will be dropped except when needed for proof or emphasis.

\begin{definition}\label{def:CompatTriples}
	A \emph{compatible triple} $(g,J,\omega)$ consists of a strong inner product $g$, a complex structure $J$, and a strong symplectic form $\omega$, such that $g(v,w) = \omega(v,Jw)$.
\end{definition}
Compatible triples can also be characterized as follows.
\begin{proposition}  \label{pr:three_properties_compatible} Let $(g,J,\omega)$ be an inner product, complex structure $J$, and symplectic form $\omega$ respectively. The following are equivalent.
	\begin{enumerate}
		\item $g(v,w) = \omega(v,Jw)$ for all $v,w \in H$;
		\item $\omega(v,w) = g(Jv,w)$ for all $v,w \in H$;
		\item $J(v) = \ph_{g}^{-1} \ph_{\omega}(v)$ for all $v \in H$.
	\end{enumerate} 
\end{proposition}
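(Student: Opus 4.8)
The plan is to dispose of the equivalence $(2)\Leftrightarrow(3)$ first, since it is essentially a matter of unwinding the definition of the musical isomorphisms, and then to prove $(1)\Leftrightarrow(2)$ by elementary substitutions that exploit $J^{2}=-\1$, the symmetry of $g$, and the antisymmetry of $\omega$.

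For $(2)\Leftrightarrow(3)$: because $g$ is strong, $\ph_{g}\colon H\to H^{*}$ is a bounded bijection, so the identity $J(v)=\ph_{g}^{-1}\ph_{\omega}(v)$ of maps $H\to H$ is equivalent to $\ph_{g}(Jv)=\ph_{\omega}(v)$ in $H^{*}$ for every $v\in H$. Evaluating this equality of functionals at an arbitrary $w\in H$ and using $\ph_{g}(Jv)(w)=g(Jv,w)$ together with $\ph_{\omega}(v)(w)=\omega(v,w)$ turns it into statement $(2)$; conversely, $(2)$ says precisely that the functionals $\ph_{g}(Jv)$ and $\ph_{\omega}(v)$ agree on all of $H$, hence coincide.

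For $(1)\Rightarrow(2)$: assuming $g(v,w)=\omega(v,Jw)$ for all $v,w$, I would substitute $-Jw$ in place of $w$ and use $J(-Jw)=-J^{2}w=w$ to obtain $\omega(v,w)=-g(v,Jw)$; combining the symmetry of $g$ with the antisymmetry of $\omega$ then rearranges this into $g(Jv,w)=g(w,Jv)=-\omega(w,v)=\omega(v,w)$, which is $(2)$. For the converse, starting from $(2)$ I would first deduce $g(Jv,w)=\omega(v,w)=-\omega(w,v)=-g(Jw,v)=-g(v,Jw)$, i.e.\ that $J$ is $g$-skew-adjoint, and then use $J^{2}=-\1$ to get $g(Jv,Jw)=-g(v,J^{2}w)=g(v,w)$; substituting $Jw$ for $w$ in $(2)$ finally yields $\omega(v,Jw)=g(Jv,Jw)=g(v,w)$, which is $(1)$.

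I do not anticipate any genuine obstacle here: the argument is just a short chain of substitutions, and the only thing requiring care is the bookkeeping of signs arising from the symmetry of $g$ versus the antisymmetry of $\omega$. The one conceptual point worth flagging is that strongness of $g$ — which is what guarantees the existence of $\ph_{g}^{-1}$ — is exactly what makes statement $(3)$ well-posed; beyond this, the equivalences use nothing about the surjectivity of $\ph_{\omega}$ or $\ph_{g}$.
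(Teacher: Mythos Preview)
Your argument is correct and is essentially the same as the paper's, just packaged differently: the paper rewrites all three conditions as operator identities between the musical isomorphisms (namely $\ph_g=-\ph_\omega J$, $\ph_\omega=\ph_g J$, and $J=\ph_g^{-1}\ph_\omega$) and then passes between them by right-multiplying by $J$ or left-multiplying by $\ph_g^{-1}$, whereas you carry out the equivalent substitutions directly at the level of the bilinear forms for $(1)\Leftrightarrow(2)$. Your detour through the $g$-skew-adjointness of $J$ in the $(2)\Rightarrow(1)$ direction is a slightly longer path to the same place, but nothing is missing or wrong.
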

\begin{proof}
    Observe that properties 1 and 2 are equivalent to $\ph_{g} = -\ph_{\omega} J$ and $\ph_{\omega} = \ph_{g} J$ respectively.
    Now by multiplying with $J$ from the right, we see that $\ph_{g} = -\ph_{\omega} J$ if and only if $\ph_{\omega} = \ph_{g} J$, and by multiplying with $\ph_{g}^{-1}$ from the left, we see that $\ph_{\omega} = \ph_{g}J$ if and only if $\ph_{g}^{-1} \ph_{\omega} = J$.
\end{proof}
Thus, any of the above could be taken as equivalent definitions of compatible triple. 

\begin{exercise} \label{exer:one_two_then_J_preserves} Assume that  $(g,J,\omega)$ is a compatible triple.  Show that $J$ preserves both $g$ and $\omega$, i.e.~$g(Jv,Jw) = g(v,w)$ and $\omega(Jv,Jw) = \omega(v,w)$. 
\end{exercise}

\begin{exercise} \label{exer:two_of_three_gives_compat} Show that
	\begin{enumerate}[label=\alph*)]
	    \item If $g$ and $J$ are an inner product and a complex structure, then there exists a symplectic form $\omega$ such that $(g,J,\omega)$ is compatible if and only if $J$ is skew-adjoint with respect to to $g$.
	    \item If $g$ and $\omega$ are an inner product and a symplectic form, then there exists a complex structure $J$ such that $(g,J,\omega)$ is compatible if and only if $\ph_{g}^{-1}\ph_{\omega} = - \ph_{\omega}^{-1} \ph_{g}$.
	    \item If $J$ and $\omega$ are a complex structure and a symplectic form, then if $g(v,w) = \omega(v,Jw)$ defines an inner product, then $J$ is skew-symmetric with respect to $\omega$.
	\end{enumerate}
	\vspace{.4em}
 \end{exercise}
 \begin{exercise} \label{exer:example_not_inner}
	Show that there exist a symplectic form $\omega$ and a complex structure $J$, which is skew-symmetric with respect to $\omega$, such that $g(v,w) = \omega(v,Jw)$ does not define an inner product.
\end{exercise}

\begin{example} \label{ex:ct_standard_finite}
 Let $H=\mathbb{R}^{2n}$.
  Let $\{e_{i}\}_{i=1,...,2n}$ be the standard basis, and write $x_{i} = e_{2i-1}$ and $y_{i} = e_{2i}$.
 Let $g$ be the standard inner product, and define $J$ by $J x_{i} = y_{i}$ and $J y_{i} = - x_{i}$.
 The symplectic form $\omega$ such that $(g, J, \omega)$ is a compatible triple is given by 
 \[ \omega(x_k,y_k)= -\omega(y_k,x_k)=1, \ \ \ \ k=1,\ldots,n,   \]
 and $\omega$ is zero on all other pairs of basis vectors. 
\end{example}
\begin{example} \label{ex:complex_manifold}
 Let $M$ be a complex manifold; that is, a $2n$-dimensional manifold with an atlas of charts $\{ \phi:U \rightarrow \mathbb{C}^n \}$ such that for any pair of charts $\phi_1$, $\phi_2$, it holds that $\phi_2 \circ \phi_1^{-1}$ is a biholomorphism on its domain. If $(z_1,\ldots,z_n)$ are coordinates on $\mathbb{C}^{2n}$ and $z_k=x_k+iy_k$, then a chart $\phi$ induces vector fields $\partial/\partial x_k$ and $\partial/\partial y_k$ in $U$. At a point $p \in U$, the real tangent space $T_p M$ has a complex structure defined to be the real linear extension of
 \begin{equation} \label{eq:almost_complex_structure}
    J_p \frac{\partial}{\partial x_k} = \frac{\partial}{\partial y_k}, \ \ \ J_p \frac{\partial}{\partial y_k} = -\frac{\partial}{\partial x_k}.
 \end{equation} 
 It can be shown that this complex structure is independent of the choice of coordinates, using the Cauchy-Riemann equations.
\end{example}
\begin{remark}
 In general, a smoothly varying choice of complex structure on each tangent space of a real $2n$-dimensional manifold $M$ is called an almost complex structure. Not every manifold with an almost complex structure can be given an atlas making it a complex manifold with almost complex structure arising from Equation \eqref{eq:almost_complex_structure}. If this can be done, the almost complex structure is called integrable. 
\end{remark}

\begin{example} \label{ex:Hodge_star_real}
 Let $\mathscr{R}$ be a compact Riemann surface, (i.e.~a compact complex manifold of dimension 2).
 In local coordinates $z=x+iy$, any real one-form can be expressed as $\beta = a(z) dx + b(z) dy$.  
 The Hodge star operator on one-forms is defined to be the complex linear extension of  
 \[  \ast dx = dy, \ \ \ \ast dy = -dx.   \]
 By the Cauchy-Riemann equations, this is coordinate-independent.
 A real or complex one-form $\beta$ is said to be harmonic if it is closed and co-closed, that is, if $d \beta=0$ and $d \ast \beta =0$ respectively. Equivalently, in local coordinates $\beta(z) = h(z) dz$ where $h(z)$ is harmonic.
 
 By the Hodge theorem, every deRham cohomology class on $\mathscr{R}$ has a harmonic representative. So the set of real harmonic one-forms  $\mathcal{A}^{\mathbb{R}}_{\mathrm{harm}} (\mathscr{R})$ on $\mathscr{R}$ is a real $2g$-dimensional vector space.  Define the pairing  
 \begin{equation} \label{eq:pairing_oneforms} 
     g(\beta,\gamma)  =  \iint_{\mathscr{R}}  \beta \wedge \ast {\gamma}. 
 \end{equation}
 It is easily checked that this is an inner product 
 on $\mathcal{A}^{\R}_{\mathrm{harm}}(\mathscr{R})$. Define also 
  \[ \omega(\beta,\gamma) =  \iint_{\mathscr{R}} \beta \wedge \gamma. \]

  With $H=\mathcal{A}^{\mathbb{R}}_{\mathrm{harm}} (\mathscr{R})$, $(g,\ast,\omega)$ is a compatible triple. 
\end{example}
\begin{remark} 
    In the previous example, the restriction of the Hodge star operator to a single tangent space is the dual of the almost complex structure given in  Example \ref{ex:complex_manifold}. 
\end{remark}
\begin{exercise} \label{exer:hodge_well_defined_symplectic} Let $\mathscr{R}$ and $\omega$ be as in Example \ref{ex:Hodge_star_real}. 
    Let $H^{1}_{\text{dR}}(\mathscr{R})$ denote the deRham cohomology space of smooth closed one-forms modulo smooth exact one-forms on $\mathscr{R}$. Given $[\beta],[\gamma] \in H^{1}_{\text{dR}}(\mathscr{R})$, let $\hat{\beta},\hat{\gamma}$ be the harmonic representatives of their respective equivalence classes. Show that for arbitrary representatives $\beta$, $\gamma$ of $[\beta]$, $[\gamma]$
    \[  \omega(\beta,\gamma) = \omega(\hat{\beta},\hat{\gamma}). \]
    In particular, $\omega$ is a well-defined symplectic form on $H^{1}_{\text{dR}}(\mathscr{R})$. 
\end{exercise}

Next, we introduce two examples that are of fundamental importance in both complex function theory and conformal field theory.
\begin{example}  \label{ex:real_boson_example}
 Let $\bose$ denote the space of sequences \[  \left\{ \{ a_n \}_{n \in \mathbb{Z}, n \neq 0}  \,:\, a_n \in \mathbb{C}, \ a_{-n} = \overline{a_{n}}, \ \sum_{n=1}^\infty n |a_n|^2 <\infty \right\}.   \]  
 Note that $\bose$ is a subset of $\ell^2$ (indexed doubly infinitely), and in particular the Fourier series
 \[  \sum_{n \in \Z \setminus \{0\}} a_n e^{in\theta}    \]
 converges almost everywhere on $\mathbb{S}^1$.   
 The space $\bose$ can be identified with the real homogeneous Sobolev space $\dot{H}^{1/2}_{\mathbb{R}}(\mathbb{S}^1)$. 
 For the purpose of this example, we define $\dot{H}^{1/2}_{\mathbb{R}}(\mathbb{S}^1)$ to be the subset of $L^2_{\mathbb{R}}(\mathbb{S}^1)$ whose Fourier coefficients are in $\bose$.
 
 This is a Hilbert space with respect to the following inner product: given
 \[  \{ a_n \}_{n \in \mathbb{Z} \backslash \{0\}} , \ \ \    \{ b_n \}_{n \in \mathbb{Z} \backslash \{0\}}   \]
 we have
 \begin{equation} \label{eq:g_in_Hb} 
  g( \{ a_{n} \}, \{ b_{n} \} ) = 2 \operatorname{Re} \sum_{n=1}^{\infty} n a_{n} \overline{b_{n}} = \sum_{n \in \Z \setminus \{0\}} |n| a_n \overline{b_n}. 
  \end{equation}
 Note that this is {\it not} the inner product in $L^2(\mathbb{S}^1)$. 
 We also have the symplectic form 
 \begin{equation} \label{eq:omega_in_Hb} 
  \omega( \{ a_{n} \}, \{ b_{n} \} ) = 2 \operatorname{Im} \sum_{n=1}^{\infty} n a_{n} \overline{b_{n}} = -i \sum_{n \in \Z \setminus \{0\}}  n a_n b_{-n} .  
 \end{equation} 
 
 The Hilbert transform $J:\bose \rightarrow \bose$ is defined to be the linear extension of the map
 \[  J(e^{i n\theta}) = -i \, \mathrm{sgn}(n) e^{i n \theta} \]
 where $\mathrm{sgn}(n)=n/|n|$ for $n \neq 0$.
 With these choices $(g,J,\omega)$ is a compatible triple on $\bose$. 
\end{example} 
\begin{exercise}  \label{exer:smooth_symplectic_Hb}
    In the previous example, show that if $f_1$ and $f_2$ are functions on $\mathbb{S}^1$ corresponding to $\{a_n \}$, $\{b_n \}$ respectively which happen to be smooth, then we can write the symplectic form as 
 \begin{equation*}
     \omega( \{ a_{n} \}, \{ b_{n} \} )= \frac{1}{2 \pi} \int_{\mathbb{S}^1} f_1 \,df_2.
 \end{equation*}
\end{exercise}
\begin{remark}
 Fourier series arising from elements of $\bose$ are precisely the set of functions on $\mathbb{S}^1$ arising as boundary values of harmonic functions $h$ on $\mathbb{D}=\{ z: |z|<1 \}$ in the real homogeneous Dirichlet space 
 \[   \dot{\mathcal{D}}_{\mathbb{R},\mathrm{harm}}(\mathbb{D}) = \left\{ h:\mathbb{D} \rightarrow \mathbb{R} \ \text{harmonic} \,:\, h(0)=0, \  \iint_\mathbb{D} \left|\nabla h \right|^2   <\infty. \right\}. \]
 The function $h$ is obtained from the Fourier series by replacing $e^{i n \theta}$ with $z^n$ for $n>0$, and with $\bar{z}^n$ for $n <0$.
 
 Equivalently, the Dirichlet space is the set of anti-derivatives of $L^2$ harmonic one-forms on $\mathbb{D}$, which vanish at $0$. 
 In summary 
 \[ \bose_{\mathbb{R}} \simeq \dot{H}^{1/2}_{\mathbb{R}}(\mathbb{S}^1) \simeq \dot{\mathcal{D}}_{\mathbb{R},\mathrm{harm}}(\mathbb{D}). \] 
\end{remark}
\begin{example}   \label{ex:fermion_real_circle} 
Let $\fermi$ be the space of complex-valued sequences
\begin{equation*}
     \fermi \defeq \left\{ \{ a_n \}_{n \in \mathbb{Z}}: \sum |a_n|^2 <\infty, a_{-n} = \overline{a_n}  \right\}.
\end{equation*}
     We equip $\fermi$ with a compatible triple, given by $(g,J,\omega)$
    \begin{align*}
        g( \{ a_{n} \}, \{ b_{n} \} ) &= 2 \operatorname{Re} \sum_{n=0}^{\infty}  a_{n} \overline{b_{n}}, & J\{ a_{n} \} &= \{ i a_{n} \}, & \omega( \{ a_{n} \}, \{ b_{n} \} ) &= 2 \operatorname{Im} \sum_{n=0}^{\infty} a_{n} \overline{b_{n}}.
    \end{align*}
    We note that $\fermi$ is a (real) Hilbert space with respect to $g$.
    We wish to view elements $f = \{ a_{n} \}$ of $\fermi$ as \emph{real}-valued functions on the circle, by setting
    \begin{equation*}
        f(z) = \sum_{n=0}^{\infty} a_{n}z^{n+1/2} + \overline{a_{n}z^{n+1/2}},
    \end{equation*}
    where $z^{1/2}$ is the following choice of square root on $S^{1}$: $e^{i \theta} \mapsto e^{i \theta/2}$ for $0 \leqslant \theta < 2\pi$.
    This identifies $\fermi$ with the space of real-valued square-integrable functions on the circle.
    $\fermi$ arises naturally as a description of the space of $L^2$-sections of the odd spinor bundle on the circle (\cite[Section 2]{kristel_fusion_2019}). Equivalently, it can be identified with a space of $L^2$-half-densities on the circle. 
    
    The factor $z^{1/2}$ is necessary to identify these spaces as a space of functions.  
    However, this identification has the feature that even analytically well-behaved elements of these spaces (e.g.~$a_{i} = \delta_{i0}$) are represented by discontinuous functions.
    In fact, because the odd spinor bundle on the circle is non-trivial, there is no $C^{\infty}(S^{1})$-equivariant way to identify its smooth sections with the smooth functions on the circle (see e.g.~the Serre-Swan theorem).
\end{example} 

We say that a pairing $\langle - , - \rangle$ on a complex vector space $H$ is {\it sesquilinear} if it is complex linear in the first entry, conjugate linear in the second entry, and $\left<v,w\right>=\overline{\left<w,v\right>}$ for all $v,w \in H$. 

\begin{remark} \label{re:non-doubled_complexification}
	Suppose that a compatible triple $(g,J,\omega)$ is given, and suppose that $H$ is a Hilbert space with respect to $g$.
	We denote by $H_{J}$ the complex vector space $H$ where complex multiplication is given by $Jv = iv$.
	Show that the form
	\begin{equation*}
		\langle v, w \rangle = g(v,w)   -  i \omega(v,w) = g(v,w)  - i g(Jv,w).
	\end{equation*}
    is sesquilinear.
	This turns $H_{J}$ into a complex Hilbert space.  
\end{remark}

In the next section, we consider instead the complexification of a space $H$ with compatible triple.

\section{Polarizations}\label{sec:Polarizations}
 In this section, we introduce the notion of polarization.
 This is a decomposition of the complexification of the Hilbert space into conjugate subspaces.
 We consider two kinds of polarization: those in which the subspaces are symplectic Lagrangian subspaces, and those in which the subspaces are orthogonal. The symplectic decomposition is assumed to satisfy an additional positivity condition (the asymmetry between the two cases is an artefact of the fact that symplectic forms are only required to be non-degenerate, whereas an inner product must also be positive definite). 
 We refer to these as positive symplectic polarizations and orthogonal polarizations respectively. The terminology is not standard, but was chosen to easily keep track of the two varieties of polarization.
 For example, in the symplectic literature, the term positive polarization is used.  

 We outline the basic theory, with examples illustrating a few of the ways that they arise.  The main aim of this section is to show that a positive symplectic polarization defines a unique complex structure and complex inner product resulting in a compatible triple on the original real Hilbert space. Similarly, an orthogonal polarization defines a unique complex structure and symplectic form which restrict to a compatible triple on the real space. 
 Thus in some sense the two concepts coincide. 
 
 However, the two perspectives are quite different when it comes to deformations of structures. In symplectic geometry the situation often arises that the symplectic form is fixed, and the inner product and complex structure vary; while in spin geometry, the situation where the inner product is fixed but the complex structure and symplectic form vary arises. We will explore those in subsequent sections.

 We proceed as follows. First, we fix a compatible triple, and show how it naturally specifies a decomposition which is both a positive symplectic and an orthogonal polarization. Then, we show how each of the two types of polarization naturally defines a compatible triple.  

Let us write $H_{\C}$ for the complex Hilbert space $H \otimes_{\R} \C$ (not to be confused with $H_{J}$).
The fact that $H_{\C}$ arises as the complexification of the real Hilbert space $H$ is witnessed by the map $\alpha: H_{\C} \rightarrow H_{\C}$ defined to be the real-linear extension of the map $v \otimes \lambda \mapsto v \otimes \overline{\lambda}$.
The map $\alpha$ is a \emph{real structure}: it is a conjugate-linear, isometric involution.
In applications, it is often more natural to start from a complex Hilbert space, which is then equipped with a real structure.
The real Hilbert space can then be recovered as the vectors that are fixed by the real structure.

If $T:H \rightarrow H$ is a bounded linear operator, then we write $T:H_{\C} \rightarrow H_{\C}$ for its complex-linear extension.
A complex-linear operator $T:H_{\C} \rightarrow H_{\C}$ corresponds to a linear operator $H \rightarrow H$ if and only if it commutes with $\alpha$.
We extend $g$ to $H_{\C}$ sesquilinearly, i.e.~
\begin{equation*}
    g(v \otimes \lambda, w \otimes \mu) = \lambda \overline{\mu} g(v,w),
\end{equation*}
and extend $\omega$ to $H_{\C}$ bilinearly. We then have that
\begin{align*}
    g(v,w) &= \omega(v, J \alpha w), & \omega(v,w) &= g(Jv, \alpha w).
\end{align*}
Any complex structure $J$ on $H$ induces a (complex direct-sum) decomposition $H_{\C} = L^{+} \oplus L^{-}$, by setting
\begin{align} \label{eq:standard_pol_def}
	L^{+} &= \{ v \in H_{\C} \, : \, Jv = iv \}, & L^{-} &= \{ v \in H_{\C} \, : \, Jv=-iv \}.
\end{align}
We make the (somewhat trivial) observation that $L^{\pm}$ are \emph{closed} subspaces because they can be realized as $L^{\pm} = \ker( J \mp i)$; this observation will be important later.
\begin{remark}\label{rem:WhyIsAlphaL+EqualToL-}
The decomposition $H_{\C} = L^{+} \oplus L^{-}$ has the property that $\alpha(L^{+}) = L^{-}$.
Indeed, let $v = x \otimes 1 + y \otimes i \in L^{+}$, we compute 
    \begin{equation*}
        - y \otimes 1 + x \otimes i = iv = Jv = J(x \otimes 1 + y \otimes i) = Jx \otimes 1 + Jy \otimes i,
    \end{equation*}
    and thus $Jx = -y$ and $Jy = x$.
    It follows that
    \begin{equation*}
        J\alpha v = J(x \otimes 1 - y \otimes i) = -y \otimes 1- x \otimes i = -i (x \otimes 1 - y \otimes i) = -i \alpha v,
    \end{equation*}
    whence $\alpha v \in L^{-}$, and thus $\alpha(L^{+}) \subseteq L^{-}$.
    Similarly, one proves $\alpha(L^{-}) \subseteq L^{+}$, which implies $L^{-} \subseteq \alpha(L^{+})$, and thus $\alpha (L^{+}) = L^{-}$.

Conversely, suppose that we are given a direct-sum decomposition $H_{\C} = W^{+} \oplus W^{-}$.
Write $P_{W^{\pm}}$ for the projection of $H_{\C}$ onto $W^{\pm}$ along $W^{\mp}$.
From the above, we know that for $J_{W} = i (P_{W^{+}} - P_{W^{-}})$ to restrict to a complex structure, a necessary condition is that $\alpha(L^{+}) = L^{-}$.
One easily verifies that this is also a sufficient condition, by computing $[\alpha,J_{W}]$.
\end{remark}

Fix now a compatible triple $(g,J,\omega)$ on $H$.
A subspace $L \subset H$ is called \emph{isotropic} (with respect to $\omega$) if $\omega$ is identically zero on $L \times L$.
A subspace $L \subset H$ is \emph{Lagrangian} if it is isotropic, and there exists another isotropic subspace $L' \subset H$ such that $H = L \oplus L'$.
Because $H$ is taken to be a Hilbert space, we have the following characterization of Lagrangian subspace, taken from \cite[Proposition 5.1]{weinstein_symplectic_1971}.
\begin{lemma}\label{lem:maximalMeansLagrangian}
    If $L \subset H$ is a maximal isotropic subspace, then it is Lagrangian.
\end{lemma}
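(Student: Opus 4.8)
The plan is to show that a maximal isotropic subspace $L$ admits an isotropic complement, which is precisely the definition of Lagrangian. The key structural fact available to us is that $H$ is a Hilbert space equipped with a compatible triple $(g,J,\omega)$, so we can freely use $g$, the orthogonal complement $L^{\perp_g}$, and the relation $\omega(v,w) = g(Jv,w)$ together with $J$ being a $g$-orthogonal complex structure (Exercise \ref{exer:one_two_then_J_preserves}).

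First I would record the identity $\omega(v,w) = g(Jv,w)$ and use it to translate isotropy of $L$ into a statement about $J$: $L$ is isotropic iff $g(Jv,w) = 0$ for all $v,w \in L$, i.e. iff $JL \subseteq L^{\perp_g}$. Next I would decompose $H = L \oplus L^{\perp_g}$ orthogonally (valid since $L$ is closed — and one should check that a maximal isotropic subspace is closed, which follows because the closure of an isotropic subspace is isotropic by continuity of $\omega$, so maximality forces $L = \overline L$). Then set $M = JL^{\perp_g}$, or more carefully work with the piece of $L^{\perp_g}$ that pairs trivially with $L$. The natural candidate complement is $L' = J(L^{\perp_g})$: since $JL \subseteq L^{\perp_g}$ and $J^2 = -\1$, applying $J$ gives $L \subseteq J(L^{\perp_g})$... so this needs adjustment. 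The cleaner route: let $L' = L^{\perp_g} \cap \ker(\ph_\omega|_{L^{\perp_g} \to \cdots})$ — i.e. the $\omega$-orthogonal complement of $L$ intersected appropriately. Actually the slickest argument: one shows $L^{\perp_\omega} := \{v : \omega(v,L) = 0\}$ satisfies $L \subseteq L^{\perp_\omega}$ by isotropy, and then proves $L^{\perp_\omega} = L$ using maximality (any $v \in L^{\perp_\omega}$ would make $L + \R v$ isotropic, forcing $v \in L$) — but this needs $L$ closed and $L^{\perp_\omega}$ closed, both fine here. Finally, from $L^{\perp_\omega} = L$ and strong non-degeneracy of $\omega$ one deduces $H = L \oplus JL$ or exhibits $JL$ (suitably corrected) as an isotropic complement.

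Let me commit to the concrete version: define $L' = J(L)^{\perp_g} \cap (L)^{\perp_g}$? No — I would instead argue as follows. Since $L$ is $g$-closed, write $H = L \oplus L^{\perp_g}$. I claim $JL = L^{\perp_g}$: the inclusion $JL \subseteq L^{\perp_g}$ is isotropy; for the reverse, if $w \in L^{\perp_g}$ then consider whether $L + \R(Jw)$ is isotropic — using that $J$ preserves $\omega$ and $\omega(Jw, Jw)=0$ automatically, and $\omega(Jw, v) = \omega(w, -v) = g(Jw,-v)$ wait I need $\omega(Jv, v') $ for $v \in L$: $\omega(Jw,v) = -\omega(w,Jv) = -g(Jw,Jv) = -g(w,v) = 0$ since $w \in L^{\perp_g}$, $v \in L$. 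So $L + \R(Jw)$ is isotropic, hence by maximality $Jw \in L$, so $w = -J(Jw) \in JL$. Therefore $JL = L^{\perp_g}$ and $H = L \oplus JL$ with $JL$ isotropic (apply the same computation, or note $\omega(Jv_1,Jv_2) = \omega(v_1,v_2) = 0$). Hence $L$ is Lagrangian.

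The main obstacle — really the only subtle point beyond the finite-dimensional case — is the closedness of $L$ and the legitimacy of the orthogonal decomposition $H = L \oplus L^{\perp_g}$; this is where the Hilbert space hypothesis (as opposed to merely a symplectic vector space) is essential, and it is exactly the content flagged by the sentence "Because $H$ is taken to be a Hilbert space." I expect the write-up to hinge on the observation that continuity of $\omega$ makes $\overline{L}$ isotropic, so maximality gives $L = \overline{L}$; everything after that is the elementary computation with $\omega(v,w) = g(Jv,w)$ sketched above.
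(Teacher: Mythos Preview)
The paper does not prove this lemma at all; it simply imports the statement from \cite[Proposition~5.1]{weinstein_symplectic_1971}. So there is no ``paper's proof'' to compare against.

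Your final argument is correct and is in fact a clean, self-contained proof that fits the paper's setup. The essential steps are: (i) $L$ is closed because the closure of an isotropic subspace is isotropic by continuity of $\omega$, and maximality forces $L=\overline{L}$; (ii) $JL\subseteq L^{\perp_g}$ since $\omega(v,w)=g(Jv,w)$; (iii) for $w\in L^{\perp_g}$ and $v\in L$ one has $\omega(Jw,v)=g(J^2w,v)=-g(w,v)=0$, so $L+\R Jw$ is isotropic, hence $Jw\in L$ by maximality and $w\in JL$; (iv) therefore $JL=L^{\perp_g}$, and since $J$ preserves $\omega$ the subspace $JL$ is isotropic, giving $H=L\oplus JL$ with both summands isotropic.

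A small presentational note: the write-up meanders through several abandoned attempts before settling on the right candidate complement $JL$. The final version is the whole proof; the earlier experiments with $L'=J(L^{\perp_g})$ and with $L^{\perp_\omega}$ can be deleted. Also, your intermediate line ``$\omega(Jw,v)=-\omega(w,Jv)=-g(Jw,Jv)$'' is garbled (though it lands on the right conclusion); the direct computation $\omega(Jw,v)=g(J^2w,v)=-g(w,v)$ is what you want.
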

The compatibility of the triple $(g,J,\omega)$ is reflected in the decomposition $H = L^{+} \oplus L^{-}$ as follows.
\begin{lemma} \label{le:standard_polarization_all_properties}
The decomposition $H_{\C} = L^{+} \oplus L^{-}$ induced by $J$ has the following properties:
    \begin{itemize}
        \item it is orthogonal with respect to the sesquilinear extension of $g$;
        \item it is Lagrangian with respect to the complex-bilinear extension of $\omega$.
    \end{itemize}
\end{lemma}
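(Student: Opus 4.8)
The plan is to verify each of the two properties directly by picking vectors in $L^{+}$ and $L^{-}$ and computing, using the relations between $g$, $\omega$, $J$ and the real structure $\alpha$ that were established just before the statement, namely $g(v,w) = \omega(v,J\alpha w)$ and $\omega(v,w) = g(Jv,\alpha w)$ on $H_{\C}$, together with the fact that $\alpha(L^{+}) = L^{-}$ from \cref{rem:WhyIsAlphaL+EqualToL-} and the defining eigenvalue conditions $Jv = \pm i v$ on $L^{\pm}$ in \eqref{eq:standard_pol_def}.

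For the orthogonality claim, I would take $v, w \in L^{+}$ and show $g(v,w) = 0$; by conjugate symmetry and $\alpha(L^{+})=L^{-}$ this simultaneously handles $L^{-}$ and the cross terms. The computation: $g(v,w) = \omega(v, J\alpha w)$; since $w \in L^{+}$ we have $\alpha w \in L^{-}$, so $J\alpha w = -i\,\alpha w$, giving $g(v,w) = -i\,\omega(v,\alpha w) = -i\,g(Jv,\alpha\alpha w) = -i\,g(Jv,w) = -i\cdot i\, g(v,w) = g(v,w)$ — wait, that is a tautology, so I would instead exploit that $\omega$ is complex-\emph{bilinear} and $g$ is \emph{sesquilinear}: the cleanest route is to note that on $L^{+}$, $J$ acts as multiplication by $i$, and since $J$ preserves $g$ (\cref{exer:one_two_then_J_preserves}, extended sesquilinearly), $g(v,w) = g(Jv,Jw) = g(iv, iw) = i\bar{i}\,g(v,w) = g(v,w)$, which again gives nothing; the working identity is rather $g(v,w) = g(Jv, Jw)$ fails to be the right tool and one must use $g(Jv,w) = -g(v,Jw)$ (skew-adjointness of $J$ from \cref{exer:two_of_three_gives_compat}(a), extended to $H_{\C}$): then for $v,w \in L^{+}$, $i\,g(v,w) = g(Jv,w) = -g(v,Jw) = -g(v, iw) = -\bar{i}\,g(v,w) = i\,g(v,w)$, still a tautology because sesquilinearity eats the conjugate. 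The actual proof must use that $\omega$ is bilinear, not sesquilinear: for $v,w \in L^{+}$, consider $\omega$ — since $L^{+}$ is the $+i$ eigenspace and $\omega(Jv,w) = -\omega(v,Jw)$ (skew-symmetry of $J$ with respect to $\omega$, \cref{exer:two_of_three_gives_compat}), we get $i\,\omega(v,w) = \omega(Jv,w) = -\omega(v,Jw) = -i\,\omega(v,w)$, hence $\omega(v,w)=0$, so $L^{+}$ is isotropic; then $g(v,w) = \omega(v,J\alpha w) = \omega(v, -i\,\alpha w) = -i\,\omega(v,\alpha w)$, and since $\alpha w \in L^{-}$ and we still need to know $\omega$ vanishes on $L^{+}\times L^{-}$ — which it does not in general, so this shows $g$ restricted to $L^{+}\times L^{+}$ equals $-i\,\omega(v,\alpha w)$ which need not vanish. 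The correct statement being proved is that $L^{+}\perp L^{-}$, not that $g$ vanishes on $L^{+}\times L^{+}$; so I would take $v \in L^{+}$, $w \in L^{-}$ and compute $i\,g(v,w) = g(Jv,w)$, and separately $g(v,Jw) = g(v,-iw) = \overline{(-i)}\,g(v,w) = i\,g(v,w)$, while skew-adjointness gives $g(Jv,w) = -g(v,Jw)$, so $i\,g(v,w) = -i\,g(v,w)$, forcing $g(v,w) = 0$. That is the orthogonality.

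For the Lagrangian claim, I would show $L^{+}$ is isotropic with respect to the bilinear extension of $\omega$ by the eigenvalue computation above ($i\,\omega(v,w) = \omega(Jv,w) = -\omega(v,Jw) = -i\,\omega(v,w)$ for $v,w\in L^{+}$, using that $J$ is skew-symmetric with respect to $\omega$), and identically for $L^{-}$; then since $H_{\C} = L^{+}\oplus L^{-}$ with both summands isotropic, $L^{+}$ is Lagrangian by definition (with $L' = L^{-}$), and likewise $L^{-}$. I should be slightly careful that the exercises are stated for the real form, so I would remark that skew-adjointness of $J$ with respect to $g$ and skew-symmetry with respect to $\omega$ pass to the sesquilinear resp.\ bilinear extensions on $H_{\C}$ by linearity, and that $\alpha J = J\alpha$. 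The main obstacle is purely bookkeeping: keeping straight which extension (sesquilinear vs.\ bilinear) is in play, since the conjugate-linearity in the second slot of $g$ is exactly what makes the naive eigenvalue argument collapse, whereas for $\omega$ the bilinearity makes it work — so the orthogonality must be extracted from the interaction of $g$ with $\alpha$ (equivalently from skew-adjointness applied across the two different eigenspaces) rather than within a single eigenspace.
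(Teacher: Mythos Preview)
Your final arguments are correct and essentially match the paper's proof. The paper takes $v\in L^{+}$, $w\in L^{-}$ and uses $J$-invariance of $g$ (\cref{exer:one_two_then_J_preserves}) together with sesquilinearity to get $g(v,w)=g(Jv,Jw)=g(iv,-iw)=i\cdot\overline{(-i)}\,g(v,w)=-g(v,w)$, and similarly $\omega(v,w)=\omega(Jv,Jw)=i^{2}\omega(v,w)=-\omega(v,w)$ for $v,w\in L^{+}$ using bilinearity; you instead use skew-adjointness/skew-symmetry of $J$ (\cref{exer:two_of_three_gives_compat}), which of course is equivalent given $J^{2}=-\1$. Your concluding remark about why the eigenvalue trick works for bilinear $\omega$ within one eigenspace but for sesquilinear $g$ only across the two eigenspaces is exactly the point; once you have that, the many false starts in the middle can simply be deleted.
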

\begin{proof}
    
    We prove the orthogonality claim.
    Let $v \in L^{+}$ and $w \in L^{-}$.
    We then have that $g(v,w) = g(Jv,Jw) = g(iv,-iw) = -g(v,w)$, whence $g(v,w) = 0$.
    The claim then follows from the fact that $L^{+}$ and $L^{-}$ are closed subspaces of $H_{\C}$ (and thus $(L^{\pm})^{\perp \perp} = L^{\pm}$).
    The claim that the decomposition is Lagrangian follows similarly.
\end{proof}

If $T: H_{\C} \rightarrow H_{\C}$ is either a complex-linear or conjugate-linear operator, then we write
\begin{equation*}
    T = \begin{pmatrix}
        a & b \\
        c & d
    \end{pmatrix} \begin{array}{c} L^+ \\ \oplus \\ L^- \end{array} \rightarrow \begin{array}{c} L^+ \\ \oplus \\ L^- \end{array} 
\end{equation*}
for the corresponding decomposition; where $a,b,c,d$ are complex-linear, resp.~conjugate-linear operators. 
It follows from \cref{le:standard_polarization_all_properties} that with respect to the decomposition $H_{\C} = L^{+} \oplus L^{-}$ we have
\begin{equation*}
    \alpha = \begin{pmatrix}
        0 & \left. \alpha \right|_{L^{-}} \\
        \left. \alpha \right|_{L^+} & 0
    \end{pmatrix}.
\end{equation*} 
A straightforward computation shows that the operator $T$ commutes with $\alpha$ if and only if $\alpha a \alpha = a$, and $\alpha d \alpha = d$, and $\alpha b = c\alpha$.

In the sequel, we will have to compare the operator $T$ to the operator $\alpha T \alpha$.
In some sources, the notation $\alpha T \alpha = \overline{T}$ is used.
This notation is justified by the observation that if $T_{ij} \in \C$ is some matrix component of $T$, with respect to a basis $\{e_{i} \}$ of $\alpha$-fixed vectors, then $(\alpha T \alpha)_{ij} = \overline{T_{ij}}$.
We eschew this notation, because we will usually be working with bases consisting of vectors that are not $\alpha$-fixed; in particular, no non-zero element of $L^{\pm}$ is $\alpha$-fixed.

\begin{exercise} \label{exer:standard}
 Consider the vector space $H = \R^{2n}$.
 Let $\{e_{i}\}_{i=1,...,2n}$ be the standard basis, and write $x_{i} = e_{2i-1}$ and $y_{i} = e_{2i}$.
 Let $g$ be the standard inner product, and define $J$ by $J x_{i} = y_{i}$ and $J y_{i} = - x_{i}$.
 Determine the symplectic form $\omega$ such that $(g, J, \omega)$ is a compatible triple.
 Then, determine the splitting $H \otimes_{\R} \C = \C^{2n} = L^{+} \oplus L^{-}$.
\end{exercise}

\begin{exercise} \label{exer:two_models_complexify}
   Let $H$ be a real Hilbert space with complex structure $J$.  Let $H_J$ be as in Remark \ref{re:non-doubled_complexification}.  Show that 
   \begin{align*}
    \Psi: H_J \rightarrow L^+, \quad
     v \mapsto \frac{1}{\sqrt{2}} \left( v-i J v \right) 
   \end{align*}
   is a complex vector space isomorphism. 
\end{exercise} 
\begin{example}
  Let $M$ be a complex manifold. For a point $p \in M$, let $J_p$ be as in Example \ref{ex:complex_manifold}.  The induced decomposition of the complexified tangent space 
  \[  \mathbb{C} \otimes T_p M = L^+ \oplus L^- \] 
  is often denoted 
  \[  L^+=T^{(1,0)}_p M, \ \ L^- = T^{(0,1)}_p M. \]
  $T^{(1,0)}_pM$ is called the holomorphic tangent space. 

  By Exercise \ref{exer:two_models_complexify}, $\Psi$ is an isomorphism between the real tangent space $T_pM$ and the holomorphic tangent space $T^{(1,0)}_pM$.  If $(z_1,\ldots,z_n)$ are local coordinates on $M$ near $p$, then 
  \begin{equation*}
    T^{(1,0)}_p M = \mathrm{span} \left\{ \frac{\partial}{\partial z_1},\ldots,\frac{\partial}{\partial z_n} \right\}.  \qedhere
  \end{equation*}
\end{example}  
\begin{exercise} \label{ex:Hermitian}
  Now assume that $(g,J,\omega)$ is a compatible triple for $H$ in Exercise \ref{exer:two_models_complexify}.  Let $g_+$ denote the restriction to $L^+$ of the sesquilinear extension of $g$.  Show that $\Psi:(H_J,\langle,\rangle) \rightarrow (L^+,g_+)$ is an isometry; that is, $g_+(\Psi(v),\Psi(w))=\langle v,w \rangle$ for all $v,w \in H$. 
\end{exercise}
\begin{remark}  In the previous exercise, $\langle -,- \rangle$ and $g^+$ are both often referred to as a Hermitian metric. The data $J$ and a metric $g$ such that $g(Jv,Jw)=g(v,w)$ for all $v,w$ are sufficient to recover the compatible triple $(g,J,\omega)$ by \cref{exer:two_of_three_gives_compat}.
\end{remark}
\begin{example} 
  Let $M$ be a finite-dimensional complex manifold. A Hermitian metric on $M$ is a Hermitian metric on $T_p M$ for each $p \in M$ (or equivalently $T^{(1,0)}_pM$), which varies smoothly with $p$. 
  If the associated form $\omega$ is closed (so that $M$ is also a symplectic manifold), then $M$ is called a K\"ahler manifold. 
\end{example}

\begin{example} \label{ex:polarization_compact_surface}
 Let $\mathscr{R}$ be a compact Riemann surface, and $H= \mathcal{A}^{\mathbb{R}}_{\mathrm{harm}}(\mathscr{R})$ be the vector space of real harmonic one-forms on $\mathscr{R}$, and let $J = \ast$, $\omega$, and $g$ be as in Example \ref{ex:Hodge_star_real}.
 Then $H_{\mathbb{C}}$ is the vector space of complex-valued harmonic one forms, which we denote by  $\mathcal{A}_{\mathrm{harm}}(\mathscr{R})$.
 The dimension of $\mc{A}_{\mathrm{harm}}(\mathscr{R})$ over $\C$ is twice the genus of $\mathscr{R}$.
 The real structure $\alpha$ is just complex conjugation.  

 Denote the set of holomorphic one-forms on $\mathscr{R}$ by $\mathcal{A}(\mathscr{R})$, and the anti-holomorphic one-forms by $\overline{\mathcal{A}(\mathscr{R})}$.
 We have the direct sum decomposition 
 \[  \mathcal{A}_{\mathrm{harm}}(\mathscr{R}) = \mathcal{A}(\mathscr{R}) \oplus \overline{\mathcal{A}(\mathscr{R})}. \]
 Explicitly, every complex harmonic one-form $\beta$ has a unique decomposition $\beta = \gamma + \overline{\delta}$
 where $\gamma,\delta \in \mathcal{A}(\mathscr{R})$.  
 Locally these are written $\gamma= a(z) dz, \overline{\delta} = \overline{b(z)} d\bar{z}$ where $a$ and $b$ are holomorphic functions of the coordinate $z$. 
 It is easily checked that $J \gamma = -i \gamma$ and $J \overline{\delta}= i \overline{\delta}$.  So 
 \[   L^-= \mathcal{A}(\mathscr{R}), \ \ \ L^+ = \overline{\mathcal{A}(\mathscr{R})}.  \]

 The corresponding sesquilinear extension of the inner product $g$ in Example \ref{ex:Hodge_star_real} is 
 \begin{equation}
     g(\beta_1,\beta_2)  =  \iint_{\mathscr{R}}  \beta_1 \wedge \ast \overline{\beta_2}. 
 \end{equation}
\end{example} 

\begin{example}[Sobolev-$\nicefrac{1}{2}$ functions on the circle]\label{ex:bosonExample}
    Let $\bose$ be as in Example \ref{ex:real_boson_example}. 
    Treating $\bose$ as a set of functions on the circle (i.e. as $\dot{H}^{1/2}_{\mathbb{R}}(\mathbb{S}^1)$), its complexification $\bose_{\mathbb{C}}$ is the set of complex-valued functions in $L^2(\mathbb{S}^1)$ whose Fourier series 
    \[ \sum_{n \in \mathbb{Z} \backslash \{0\}} a_n e^{i n \theta} \]
    satisfy the stronger condition
    \[   \sum_{n \in \mathbb{Z} \backslash \{0\}} |n| |a_n|^2<\infty.  \]
    that is, the Sobolev-$1/2$ space $\dot{H}^{1/2}(\mathbb{S}^1)$. Equivalently, one removes the condition $a_{-n} = \overline{a_n}$ in the sequence model of $\bose$: 
    \[ \bose_{\mathbb{C}} = \left\{ \{a_n\}_{n \in \mathbb{Z} \backslash \{0\}} \,:\, \sum_{n \in \mathbb{Z} \backslash \{0\}} |n| |a_n|^2<\infty \right\}. \]
    In this case, the real structure is given by complex conjugation of the function on $\mathbb{S}^1$; equivalently, 
    \[   \alpha(\{ a_n \}) = \{ \overline{a_{-n}} \}. \]

    We then see that $L^{-}$ is given by the sequences $\{ a_{n} \}$ such that $a_{n} = 0$ for $n < 0$ and $L^+$ is given by the sequences such that $a_n = 0$ for $n>0$.  
    Equivalently, $L^{-}$ consists of those functions that extend to holomorphic functions on the unit disk while elements of $L^+$ extend anti-holomorphically.
\end{example}
\begin{exercise} \label{exer:standard_polarization_Hb}
    In the previous example, show that 
    \[  \omega(\{a_n\},\{b_n\}) = -i \sum_{n=-\infty}^\infty  n a_n   b_{-n}  \]
    and 
    \[ g(\{a_n\},\{b_n\}) = \sum_{n=-\infty}^\infty |n| a_n \overline{b_n} \]
    and verify $g(v,w)=\omega(v,J \alpha w)$. 
    Verify that $L^+ \oplus L^-$ is both Lagrangian and orthogonal. 
\end{exercise}
\begin{remark}
    The polarization in the previous example corresponds to the decomposition of complex harmonic functions in the homogeneous Dirichlet space on $\mathbb{D}$ into their holomorphic and anti-holomorphic parts. Defining 
    \[  \dot{\mathcal{D}}(\mathbb{D}) = \left\{ h:\mathbb{D} \rightarrow \mathbb{C} \ \mathrm{holomorphic} \,:\, \iint_{\mathbb{D}} |h'|^2 <\infty \right\}   \]
    and   
     \[   \dot{\mathcal{D}}_{\mathrm{harm}}(\mathbb{D}) = \left\{ h:\mathbb{D} \rightarrow \mathbb{R} \ \text{harmonic} \,:\, h(0)=0, \  \iint_\mathbb{D} \left| \frac{\partial h}{\partial z} \right|^2 +\left| \frac{\partial h}{\partial \bar{z}} \right|^2  <\infty. \right\} \]
     we have the decomposition
     \[ \dot{\mathcal{D}}_{\mathrm{harm}}(\mathbb{D})= \dot{\mathcal{D}}(\mathbb{D}) \oplus \overline{\dot{\mathcal{D}}(\mathbb{D})}. \]
    Extending elements of $\bose_{\mathbb{C}}$ harmonically to $\mathbb{D}$ we can identify $L^-\simeq \dot{\mathcal{D}}(\mathbb{D})$, $L^+ \simeq \overline{\dot{\mathcal{D}}(\mathbb{D})}$.
\end{remark}

\begin{example}[Square-integrable functions on the circle]\label{ex:fermionExample}   
   Let $\fermi$ be as in Example \ref{ex:fermion_real_circle}. 
    The complexification of $\fermi$ can be identified with the complex-valued square-integrable functions on the circle, which we in turn identify (by taking Fourier coefficients) with the space of complex-valued sequences $\{a_{n} \}$ indexed by $\mathbb{Z}$ such that the sum
    \begin{equation*}
        \sum_{n} |a_{n}|^{2}
    \end{equation*}
    converges.
    We then see that $L^{+}$ is given by the sequences $\{ a_{n} \}$ such that $a_{n} = 0$ for $n < 0$.
    Equivalently, $L^{+}$ consists of those functions $f$, such that $z \mapsto f(z)z^{-1/2}$ extends to a holomorphic function on the unit disk.
\end{example}
\begin{remark}
    The set of holomorphic extensions $h(z)$ to the unit disk of functions $f(z)z^{-1/2}$ in the previous example is precisely the Hardy space. This follows from a classical characterization of the Hardy space as  the class of holomorphic functions on the disk, whose non-tangential boundary values are in $L^2(\mathbb{S}^1)$.
    On the other hand $L^-$ consists of those $f(z) z^{1/2}$ which extend anti-holomorphically into the unit disk. These extensions are precisely the conjugate of the Hardy space.    
\end{remark}

It turns out that the space of deformations of either $g$ or $\omega$ has a rich geometric structure.  In the rest of this section, we define two notions of polarization - those arising from a fixed inner product (orthogonal polarizations), and those arising from a fixed symplectic form (positive symplectic polarizations). Both lead to a compatible triple. 

\begin{definition} 
Let $(H,g)$ be a real Hilbert space and let $H_{\mathbb{C}}$ be its complexification with real structure $\alpha$.
Extend $g$ sesquilinearly to $H_{\mathbb{C}}$.
An \emph{orthogonal polarization} is an orthogonal decomposition
\[  H_{\C} = W^{+} \oplus W^{-} \] 
such that $\alpha (W^{+}) = W^{-}$.
\end{definition} 

One can associate a complex structure and bilinear form to an orthogonal polarization. 
\begin{definition}
   Let $(H,g)$ be a real Hilbert space and let $H_{\mathbb{C}}$ be its complexification with real structure $\alpha$. Let $H_{\C} = W^{+} \oplus W^{-}$ be an orthogonal polarization.
   Let $P_{W}^{\pm}: H_{\C} \rightarrow W^{\pm}$ be the orthogonal projections.
   The complex structure associated to the polarization is $J_{W} = i(P_{W}^{+} - P_{W}^{-})$, and the bilinear form on $H_{\mathbb{C}}$ associated to the polarization is
    \begin{equation*}
        \omega_W(v,w) = g(J_W v,w). \qedhere
    \end{equation*}
\end{definition}
Observe that by the reasoning in Remark \cref{rem:WhyIsAlphaL+EqualToL-}, the condition that $\alpha(W^{+}) = W^{-}$ is both necessary and sufficient for $J_{W}$ to restrict to $H$.

An orthogonal polarization defines a compatible triple. 
\begin{proposition}\label{lem:SubspaceToSymplectic} Let $H_{\mathbb{C}}= W^+ \oplus W^-$ be an orthogonal polarization. 
    The complex structure $J_{W}$ and symplectic form $\omega_W$ associated to this polarization restrict to $H$, and $(g,J_{W},\omega_{W})$ is a compatible triple.
    Moreover, $W^{+}$ and $W^{-}$ are Lagrangian with respect to $\omega_{W}$.
\end{proposition}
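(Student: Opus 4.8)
The plan is to verify directly that $J_W$ is a complex structure, that $\omega_W$ is a strong symplectic form, and then read off compatibility from \cref{pr:three_properties_compatible} and the Lagrangian claim from \cref{le:standard_polarization_all_properties}.

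First I would dispose of $J_W$. Since the decomposition $H_{\C} = W^{+} \oplus W^{-}$ is \emph{orthogonal}, the projection onto $W^{\pm}$ along $W^{\mp}$ used in \cref{rem:WhyIsAlphaL+EqualToL-} coincides with the orthogonal projection $P_W^{\pm}$; hence $P_W^{+} + P_W^{-} = \1$, $P_W^{+}P_W^{-} = P_W^{-}P_W^{+} = 0$, each $P_W^{\pm}$ is bounded, and each is self-adjoint for the sesquilinear extension of $g$. From $(P_W^{+} - P_W^{-})^{2} = \1$ one gets $J_W^{2} = -\1$, and $J_W$ is bounded, so $J_W$ is a complex structure on $H_{\C}$. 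By the observation immediately preceding the proposition (the computation of $[\alpha, J_W]$ in \cref{rem:WhyIsAlphaL+EqualToL-}), the hypothesis $\alpha(W^{+}) = W^{-}$ forces $\alpha J_W \alpha = J_W$, so $J_W$ restricts to a complex structure on $H$.

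Next I would handle $\omega_W$. Because the $P_W^{\pm}$ are self-adjoint for the sesquilinear $g$, we get $J_W^{*} = \overline{i}(P_W^{+} - P_W^{-}) = -J_W$, so $J_W$ is skew-adjoint with respect to $g$; restricted to the real space, $J_W$ is skew-symmetric for the real inner product $g$. Consequently $\omega_W(v,w) = g(J_W v, w)$ is a real-valued bilinear form on $H$, and using symmetry of $g$ together with skew-symmetry of $J_W$ one computes $\omega_W(v,w) = g(J_W v, w) = g(w, J_W v) = -g(J_W w, v) = -\omega_W(w,v)$, so $\omega_W$ is antisymmetric. It is \emph{strong} because the musical map factors as $\ph_{\omega_W} = \ph_g \circ J_W$, a composition of the isomorphism $\ph_g$ (available since $(H,g)$ is a Hilbert space) with the isomorphism $J_W$ (inverse $-J_W$). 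Hence $\omega_W$ is a strong symplectic form on $H$; equivalently, one could invoke \cref{exer:two_of_three_gives_compat}(a) to produce such an $\omega$ and note it must equal $\omega_W$ by \cref{pr:three_properties_compatible}. With $g$, $J_W$, $\omega_W$ all in hand, the identity $\omega_W(v,w) = g(J_W v,w)$ is precisely property 2 of \cref{pr:three_properties_compatible}, so $(g, J_W, \omega_W)$ is a compatible triple.

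Finally, for the ``moreover'': for $v \in W^{+}$ we have $J_W v = i P_W^{+} v = i v$ and for $v \in W^{-}$ we have $J_W v = -i v$, and since $H_{\C} = W^{+}\oplus W^{-}$ while the $\pm i$-eigenspaces of $J_W$ intersect trivially, this forces $W^{+} = L^{+}$ and $W^{-} = L^{-}$. Applying \cref{le:standard_polarization_all_properties} to the compatible triple $(g, J_W, \omega_W)$ then gives that $W^{+} \oplus W^{-}$ is Lagrangian with respect to the (complex-bilinear extension of the) symplectic form $\omega_W$. I do not expect any genuine obstacle here: the only care needed is the bookkeeping that the orthogonal projections of the definition agree with the complementary projections of \cref{rem:WhyIsAlphaL+EqualToL-}, and that skew-adjointness for the sesquilinear $g$ restricts to skew-symmetry for the real $g$ — both routine given the hypotheses.
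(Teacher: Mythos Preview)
Your proposal is correct and follows essentially the same route as the paper: verify $J_W^2=-\1$ and $\alpha J_W\alpha=J_W$ so that $J_W$ restricts to $H$, use skew-adjointness of $J_W$ for $g$ to get antisymmetry of $\omega_W$, factor $\ph_{\omega_W}=\ph_g J_W$ for strongness, and read off compatibility and the Lagrangian claim. The only cosmetic difference is that the paper verifies isotropy of $W^\pm$ directly for the final step, whereas you route it through \cref{le:standard_polarization_all_properties} after identifying $W^\pm$ with the eigenspaces of $J_W$; both are immediate.
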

\begin{proof}
    First, it is clear that $J_{W}^{2} = - \1$.
    Now, let $v = (x^{+},x^{-}) \in W^{+} \oplus W^{-}$ be arbitrary.
    We then have $\alpha J_{W} v = \alpha (ix^{+},-ix^{-}) = (i \alpha x^{-}, - i \alpha x^{+}) = J_{W} \alpha v$.
    Observe that $H \subseteq H_{\C}$ can be identified as the $+1$ eigenspace for $\alpha$; it follows that if $v \in H$, then $\alpha J_{W}v = J_{W} \alpha v = J_{W}v$ and thus $J_{W}v \in H$.
    It immediately follows that $\omega_W(v,w)=g(J_Wv,w)$ is real on $H$. 
    
    It remains to be shown that $(g,J_{W},\omega_{W})$ is a compatible triple. 
    Of course, if $\omega_{W}$ is symplectic, then it is compatible.
    That $\omega_{W}$ is skew-symmetric follows from the computation $\omega_{W}(v,w) = g(J_{W}v,w) = -g(v,J_{W}w) = -\omega_{W}(w,v)$.
    The map $\ph_{\omega_{W}}: H \rightarrow H^{*}$ is given by $\ph_{g}J_{W}$, and is thus an isomorphism.
    
    It is straightforward to show that $W^{+}$ and $W^{-}$ are isotropic, and the assumption that $H_{\C} = W^{+} \oplus W^{-}$ then implies that they are both Lagrangian.
\end{proof}
Let $\Pol^{g}(H)$ denote the set of  those closed subspaces $W^{+} \subset H_{\C}$ such that $H_{\C} = W^{+} \oplus \alpha(W^{+})$ as an orthogonal direct sum.
Observe that by \cref{lem:SubspaceToSymplectic} and \cref{le:standard_polarization_all_properties} we may view $\Pol^{g}(H)$ as parameterizing the set of deformations of $\omega$ (compatible with $g$).

Next, we define positive symplectic polarizations.   
\begin{definition}  \label{de:Kahler_pol}
    Let $(H,\omega)$ be a real vector space with symplectic form $\omega$, and let $H_{\mathbb{C}}$ be its complexification with real structure $\alpha$. Extend $\omega$ complex bilinearly to $H_{\mathbb{C}}$.

  A \emph{positive symplectic polarization} is a Lagrangian decomposition
\[  H_{\C} = W^{+} \oplus W^{-} \] 
such that $\alpha (W^{+}) = W^{-}$ and $-i \omega(v,\alpha{w})$ is a positive-definite sesquilinear form on $W^+$.
\end{definition}

In fact, this implies that $g_+(v,w) := -i\omega(v,\alpha w)$ is a strong inner product, which by Proposition \ref{pr:StrongEquivalence} makes $W_+$ a Hilbert space. To see this, observe that $\ph_{\omega}: H_{\C} \rightarrow H_{\C}^{*}$ restricts to an isomorphism $\ph_{\omega}: W^{+} \rightarrow (W^{-})^{*}$.
A direct computation shows that $\ph_{g_+}: W^{+} \rightarrow (W^{+})^{*}$ is given by $\ph_{g_+} = -i \alpha^{*} \ph_{\omega}$, whence $\ph_{g_+}$ is an isomorphism. 
It follows that $g_+$ is a strong inner product if and only if it is positive-definite.

\begin{exercise} \label{exer:Kpol_other_half_pos_def}
    Show that if $W^{+} \oplus W^{-} = H_{\C}$ is a positive symplectic polarization, then the pairing $i\omega(v,\alpha{w})$ is a positive-definite sesquilinear form on $W^-$, such that $W^-$ is a Hilbert space with respect to this pairing.
\end{exercise}

As with orthogonal polarizations, a positive symplectic polarization is enough to define the other two pieces of data.
\begin{definition}\label{def:ComplexPlusSesqui}
   Let $(H,\omega)$ be a real Hilbert space, equipped with symplectic form $\omega$.
   Let $H_{\C} = W^{+} \oplus W^{-}$ be a symplectic polarization; let $P_{W}^{\pm}: H_{\C} \rightarrow W^{\pm}$ be the projections with respect to this decomposition.
   The complex structure associated to the polarization is $J_{W} =i (P_{W}^{+} - P_{W}^{-})$, and the sesquilinear form on $H_{\mathbb{C}}$ associated to the polarization is 
    \begin{equation*}
        g_W(v,w) = \omega(v,J\alpha{w}). \qedhere
    \end{equation*}
\end{definition}
The condition that the symplectic polarization $H_{\C} = W^{+} \oplus W^{-}$ is positive is exactly the condition that is required to guarantee that $(g_{W},J_{W},\omega)$ is compatible.
\begin{proposition}\label{pr:SubspaceToInner} Let $H_{\mathbb{C}}= W^+ \oplus W^-$ be a positive symplectic polarization. 
    The associated complex structure $J_{W}$ and sesquilinear form $g_W$ restrict to a complex structure and inner product on $H$.
    Moreover, $g_W$ is a positive definite sesquilinear form on $H_{\mathbb{C}}$, with respect to which $H_{\mathbb{C}}$ is a Hilbert space, and  $(g_W,J_W,\omega)$ is a compatible triple. 
\end{proposition}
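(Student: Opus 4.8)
The plan is to run the mirror image of the proof of \cref{lem:SubspaceToSymplectic}, interchanging the roles of $g$ and $\omega$ and using the positivity hypothesis where it is needed. First I would dispose of the complex structure: $J_W^{2} = -\1$ is immediate from $P_W^{+} + P_W^{-} = \1$ and $P_W^{+}P_W^{-} = 0$, and $J_W$ is bounded because $W^{+}$ and $W^{-}$ are closed complementary subspaces, so the associated projections are bounded by the closed graph theorem. That $J_W$ commutes with $\alpha$—and hence restricts to a (bounded) complex structure on the $\alpha$-fixed subspace $H$—is the computation already carried out in \cref{rem:WhyIsAlphaL+EqualToL-}: writing $v = v^{+} + v^{-}$ and using $\alpha(W^{+}) = W^{-}$ gives $\alpha J_W v = J_W \alpha v$.

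The core of the argument is to put $g_W$ into block form. Since the decomposition $H_{\C} = W^{+} \oplus W^{-}$ is Lagrangian, $W^{+}$ and $W^{-}$ are $\omega$-isotropic, and since $\alpha$ interchanges them, the cross terms in $\omega(v, J_W\alpha w)$ vanish and one is left with
\[ g_W(v,w) \;=\; \omega(v, J_W\alpha w) \;=\; -i\,\omega(v^{+},\alpha w^{+}) \;+\; i\,\omega(v^{-},\alpha w^{-}). \]
By \cref{de:Kahler_pol} the first summand is exactly the positive-definite sesquilinear form $g_{+}$ on $W^{+}$, and by \cref{exer:Kpol_other_half_pos_def} the second is a positive-definite sesquilinear form on $W^{-}$; moreover each of these makes the corresponding summand a Hilbert space. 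Therefore $g_W$ is a positive-definite Hermitian sesquilinear form on $H_{\C}$ exhibiting $H_{\C} = W^{+}\oplus W^{-}$ as an orthogonal direct sum of Hilbert spaces, hence itself a Hilbert space. (Equivalently, $\ph_{g_W}$ is block-diagonal with the isomorphisms $\ph_{g_{+}}$, $\ph_{g_{-}}$ on the diagonal, so $g_W$ is a strong inner product and one may cite \cref{pr:StrongEquivalence}.)

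It remains to restrict to $H$ and assemble the triple. For $v, w \in H$ we have $\alpha w = w$, so $g_W(v,w) = \omega(v, J_W w)$; this is real-valued because $\omega$ is real on $H$ and $J_W$ preserves $H$, and symmetric because $J_W$ is $\omega$-skew-symmetric (a consequence of the isotropy of $W^{\pm}$, just as in the proof of \cref{le:standard_polarization_all_properties}). Positive-definiteness of $g_W|_{H}$ is inherited from $H_{\C}$, and it is a strong inner product since $\ph_{g_W} = J_W^{*}\,\ph_{\omega}$ is a composition of isomorphisms. Finally $(g_W, J_W, \omega)$ is a compatible triple: $g_W$ is a strong inner product and $J_W$ a complex structure by the above, $\omega$ is a strong symplectic form by hypothesis, and the defining relation $g_W(v,w) = \omega(v, J_W w)$ on $H$ is condition (1) of \cref{pr:three_properties_compatible}. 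I expect the only non-formal point to be the functional analysis—boundedness of $P_W^{\pm}$ and the passage from strong inner products to completeness via \cref{pr:StrongEquivalence}—with everything else reducing to routine bilinear-algebra identities built from the Lagrangian and $\alpha$-swapping properties.
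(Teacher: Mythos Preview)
Your proposal is correct and follows essentially the same route as the paper: block-diagonalize $g_W$ using Lagrangian isotropy and the $\alpha$-swap to get $g_W = g_{W,+} \oplus g_{W,-}$, invoke \cref{de:Kahler_pol} and \cref{exer:Kpol_other_half_pos_def} for positivity and completeness on each summand, then restrict to $H$ and read off the compatibility relation $g_W(v,w)=\omega(v,J_Ww)$. The only cosmetic difference is that the paper verifies realness of $g_W|_H$ by writing $v=(v'+\alpha v')/2$ and observing that the cross-terms pair to a complex number plus its conjugate, whereas you argue directly that $\omega$ is real on $H$ and $J_W$ preserves $H$; both work, and your version is arguably cleaner.
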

\begin{proof}
    It is clear that $J_{W}^{2} = - \1$.
    Sesquilinearity of $g_W$ follows immediately from the definition. 
    let $g_{W,\pm}$ denote the restrictions of $g_W$ to $W^\pm$. We have 
    \begin{align*}
        g_{W,+}(v,w) & = -i\omega(v,\alpha w) \ \ \  v,w \in W^+ \\
        g_{W,-}(v,w) & = i \omega(v,\alpha w) \ \ \ \   v,w \in W^-.
    \end{align*} 
    It is easily checked that $W^+$ and $W^-$ are orthogonal with respect to $g_W$, so we have
    \[ (H_{\C},g_W) = (W^+,g_{W,+}) \oplus (W^-,g_{W,-})  \]
    and both summands are Hilbert spaces by Exercise \ref{exer:Kpol_other_half_pos_def}. 
    
    The fact that $\alpha(W^{+}) = W^{-}$ implies that $J_{W}$ commutes with $\alpha$, which in turn implies that $J_{W}$ restricts to $H$. To see that $g_W$ is real on $H$, observe that any $v,w\in H$ can be written $(v'+\alpha{v'})/2$, $(w' + \alpha{w'})/2$ where $v',w' \in W^+$. 
    Using the definition $g_W(v,w)=\omega(v,J\alpha{w})$ and the fact that $J$ commutes with complex conjugate, we have
    \[  g_W(v'+\alpha{v'},w'+\alpha{w'})= g_W(v',w') + g_W(\alpha{v'},\alpha{w'}) \]
    which is real. Finally, restricting to $v,w \in H$, we have 
    \[ g_W(v,w) = \omega(v,Jw)  \]
    so $(g_W,J_W,\omega)$ is a compatible triple on $H$.
\end{proof}

Denote the set $W^+$ such that $H_{\mathbb{C}}=W^+ \oplus \alpha(W^+)$ is a positive symplectic polarization by $\Pol^\omega(H)$. By Lemma \ref{le:standard_polarization_all_properties} and Proposition \ref{pr:SubspaceToInner}, the positive symplectic polarizations can be viewed as parametrizing the deformations of $g$ (compatible with $\omega$).

Summarizing the section so far, we have seen that a vector space $H$, equipped with a compatible triple $(g,J,\omega)$, can equivalently be seen as
\begin{itemize}\itemsep-.4em
    \item A Hilbert space $(H,g)$, equipped with an orthogonal polarization of $H_{\C}$; or
    \item A symplectic vector space $(H,\omega)$, equipped with a positive symplectic polarization of $H_{\C}$.
\end{itemize}
\vspace{.4em}

We write $\GL(H)$ for the group of invertible linear transformations from $H$ to $H$.
We then define the \emph{symplectic group} of $H$ to be
\begin{equation*}
	\Sp(H) \defeq \{ u \in \GL(H) \, : \, \omega(u v,u w) = \omega(v,w) \text{ for all } v,w \in H \}.
\end{equation*}
Similarly, we define the \emph{orthogonal group} of $H$ to be
\begin{equation*}
	\O(H) \defeq \{ u \in \GL(H) \, : \, g(u v,u w) = g(v,w) \text{ for all } v,w \in H \}.
\end{equation*}

Define $u^{\pullb}g$, $u^{\pullb}J$, and $u^{\pullb}\omega$ by
\begin{align*}
    u^{\pullb}g(v,w) &= g(u^{-1}v,u^{-1}w), & u^{\pullb}Jv &= uJu^{-1}v, & u^{\pullb}\omega(v,w) = \omega(u^{-1}v,u^{-1}w).
\end{align*}
We then have the following.
\begin{proposition}
If $u \in \GL(H)$, then the triple $(u^{\pullb}g, u^{\pullb}J, u^{\pullb}\omega)$ is again compatible.
\end{proposition}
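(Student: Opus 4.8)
The plan is to verify compatibility directly from the definition, using the characterization $g(v,w) = \omega(v,Jw)$ from \cref{def:CompatTriples}, and checking along the way that the three transported structures are again a strong inner product, a complex structure, and a strong symplectic form. Since $u \in \GL(H)$ has bounded inverse (by the standing convention on isomorphisms), continuity of each transported object is immediate from continuity of the originals composed with $u^{\pullb}$, so no functional-analytic subtlety arises.

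First I would record the elementary structural facts. Bilinearity and symmetry (resp.\ antisymmetry) of $u^{\pullb}g$ (resp.\ $u^{\pullb}\omega$) are inherited termwise. For $u^{\pullb}J$, one computes $(u^{\pullb}J)^2 v = uJu^{-1}uJu^{-1}v = uJ^2u^{-1}v = -v$, so $(u^{\pullb}J)^2 = -\1$. For the non-degeneracy claims, I would observe that the musical map of the transported form factors through the original: $\ph_{u^{\pullb}g} = (u^{-1})^{*}\,\ph_{g}\,u^{-1}$, where $(u^{-1})^{*}:H^{*}\to H^{*}$ is the Banach-space adjoint (itself an isomorphism since $u^{-1}$ is). Being a composition of three isomorphisms, $\ph_{u^{\pullb}g}$ is an isomorphism, so $u^{\pullb}g$ is strongly non-degenerate; the same computation with $\omega$ in place of $g$ handles $u^{\pullb}\omega$. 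Positivity of $u^{\pullb}g$ is clear: $u^{\pullb}g(v,v) = g(u^{-1}v,u^{-1}v)\geqslant 0$. Hence $u^{\pullb}g$ is a strong inner product, $u^{\pullb}\omega$ a strong symplectic form, and $u^{\pullb}J$ a complex structure.

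The heart of the matter is then the single identity $u^{\pullb}g(v,w) = u^{\pullb}\omega(v,\,u^{\pullb}J\,w)$. I would unwind both sides: the right-hand side is $\omega\!\bigl(u^{-1}v,\ u^{-1}(uJu^{-1}w)\bigr) = \omega\!\bigl(u^{-1}v,\ Ju^{-1}w\bigr)$, which by compatibility of the original triple equals $g(u^{-1}v, u^{-1}w) = u^{\pullb}g(v,w)$. That is the whole computation — it is short precisely because conjugation by $u^{-1}$ turns $u^{\pullb}J$ back into $J$ applied to pulled-back arguments.

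There is really no serious obstacle here; the only thing to be careful about is the bookkeeping with $u^{-1}$ versus $u$ in the definitions of $u^{\pullb}g,u^{\pullb}J,u^{\pullb}\omega$ (the pullback convention uses $u^{-1}$, so $u^{\pullb}J$ is honest conjugation $uJu^{-1}$, and the cancellation in the compatibility check works cleanly), and the observation that boundedness of $u^{-1}$ is exactly what guarantees $\ph_{u^{\pullb}g}$ and $\ph_{u^{\pullb}\omega}$ are isomorphisms rather than merely injective. In the finite-dimensional case the latter point is automatic, consistent with the remarks following \cref{ex:WeakInnerProduct}. I would present the proof as: (1) note continuity and the algebraic identities including $(u^{\pullb}J)^2=-\1$; (2) verify the musical maps remain isomorphisms via the factorization through Banach adjoints; (3) verify positivity; (4) perform the one-line compatibility computation above.
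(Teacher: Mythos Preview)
Your proof is correct and follows essentially the same route as the paper: verify $(u^{\pullb}J)^2=-\1$, factor the transported musical maps as $\ph_{u^{\pullb}\psi}=(u^{-1})^{*}\ph_{\psi}u^{-1}$ to get strong non-degeneracy, and check positivity of $u^{\pullb}g$ directly. You are in fact slightly more thorough, since you spell out the compatibility identity $u^{\pullb}g(v,w)=u^{\pullb}\omega(v,u^{\pullb}Jw)$ explicitly, whereas the paper's proof focuses on showing each of the three structures is of the right type and leaves that one-line verification implicit.
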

\begin{proof} It is immediate that $u^\pullb J$ is a complex structure. To prove the other two claims, 
 observe that if $\psi: H \times H \rightarrow \R$ is a bilinear pairing, then we have $\ph_{u^{\pullb}\psi}(v)(w) = u^{\pullb}\psi(v,w) = \psi(u^{-1}v,u^{-1}w) = \ph_{\psi}(u^{-1}v)(u^{-1}w)$.
    This implies that $\ph_{u^{\pullb}\psi} = (u^{-1})^*\ph_{\psi}u^{-1}$, where $u^*$ is the adjoint map $H^{*} \rightarrow H^{*}$.
    It follows that $\ph_{\psi}$ is invertible if and only if $\ph_{u^{\pullb}\psi}$ is invertible.
    Because pullback clearly preserves anti-symmetry, $u^\pullb \omega$ is a strong symplectic form. 
    The fact that $u^\pullb g$ is a strong inner product would follow from $u^{\pullb}g(v,v) \geqslant 0$ by the discussion following Definition \ref{de:Kahler_pol}. But this follows from the positivity of $g$, since $u^{\pullb}g(v,v) = g(u^{-1}v,u^{-1}v) \geqslant 0$.
\end{proof}
The groups $\GL(H)$, $\O(H)$, and $\Sp(H)$ act on $H_{\C}$ by complex-linear extension.
The decomposition $H_{\C} = L^{+}_{u} \oplus L^{-}_{u}$ corresponding to the triple $(u^{\pullb}g, u^{\pullb}J, u^{\pullb}\omega)$ is given by $L^{\pm}_{u} = u L^{\pm}$ (this follows directly from \cref{eq:standard_pol_def}).
The above guarantees that $\Sp(H)$ acts on $\Pol^{\omega}(H)$ by $(u,W^{+}) \mapsto u W^{+}$.
Indeed, if $(g_{W},J_{W},\omega)$ is the triple corresponding to $W^{+}$, and $u \in \Sp(H)$, then $(u^{\pullb}g, u^{\pullb}J,\omega)$ is the triple corresponding to $uW^{+}$, which tells us that $uW^{+} \in \Pol^{\omega}(H)$.
Similarly, one sees that $\O(H) \times \Pol^{g}(H) \rightarrow \Pol^{g}(H), (u,W^{+}) \mapsto uW^{+}$ defines an action of $\O(H)$ on $\Pol^{g}(H)$.
The \emph{unitary group} of $H$ is defined to be the intersection of the symplectic and orthogonal group, i.e.~$\U(H) \defeq \Sp(H) \cap \O(H)$.
\begin{exercise}  \label{exer:unitaries_are_same}
    Show that the unitary group $\U(H)$ is equal to the unitary group of the complex Hilbert space $H_{J}$, which is defined in the usual manner.
\end{exercise}

The following lemma tells us that, at least as sets, we may view $\Pol^{g}(H)$ and $\Pol^{\omega}(H)$ as homogeneous spaces.

\begin{lemma}\label{lem:HomogeneousSets}
    The maps $\O(H) \rightarrow \Pol^{g}(H), u \mapsto u(L^{+})$ and $\Sp(H) \rightarrow \Pol^{\omega}(H), u \mapsto u(L^{+})$ descend to bijections
    \begin{align*}
        \O(H)/\U(H) &\rightarrow \Pol^{g}(H), & \Sp(H)/\U(H) &\rightarrow \Pol^{\omega}(H) .
    \end{align*}
\end{lemma}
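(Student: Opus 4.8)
The plan is to prove the two statements in parallel, since the arguments are formally identical after replacing $g$ by $\omega$ and ``orthogonal'' by ``Lagrangian'', and to break each into two parts: (i) the map $u \mapsto u(L^{+})$ is surjective onto the appropriate Grassmannian, and (ii) $u(L^{+}) = v(L^{+})$ if and only if $u^{-1}v \in \U(H)$, so that it is well-defined and injective on cosets. Throughout I would use the identifications already established: an element $W^{+} \in \Pol^{g}(H)$ (resp.\ $\Pol^{\omega}(H)$) is the $+i$-eigenspace $L^{+}_{W}$ of a complex structure $J_{W}$ fitting into a compatible triple $(g, J_{W}, \omega_{W})$ (resp.\ $(g_{W}, J_{W}, \omega)$) via \cref{lem:SubspaceToSymplectic} (resp.\ \cref{pr:SubspaceToInner}), while $L^{+}$ itself corresponds to the fixed triple $(g, J, \omega)$.

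For surjectivity in the orthogonal case: given $W^{+} \in \Pol^{g}(H)$ with associated triple $(g, J_{W}, \omega_{W})$, I want an element $u \in \O(H)$ with $u(L^{+}) = W^{+}$. By \cref{eq:standard_pol_def} and the discussion following it, $u(L^{+}) = W^{+}$ exactly when the complex structure $u^{\pullb}J = uJu^{-1}$ equals $J_{W}$, i.e.\ $uJ = J_{W}u$. So I must produce $u$ intertwining $J$ and $J_{W}$ that additionally preserves $g$. The natural candidate is built from the two Hermitian structures: both $(H, g, J)$ and $(H, g, J_{W})$ are complex Hilbert spaces (via \cref{re:non-doubled_complexification}), both separable and of the same complex dimension since $H$ is fixed; choose complex-linear unitary isometry between them — this is exactly a real-linear $g$-orthogonal map intertwining $J$ and $J_{W}$. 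Concretely one can take orthonormal bases $\{e_k\}$ for $(H_J, \langle\,,\rangle)$ and $\{f_k\}$ for $(H_{J_W}, \langle\,,\rangle_W)$ and let $u$ be the real-linear extension of $e_k \mapsto f_k$; it is $g$-orthogonal because both inner products have the same real part $g$, and it intertwines the complex structures by $\C$-linearity. The symplectic case is symmetric: given $W^{+} \in \Pol^{\omega}(H)$ with triple $(g_W, J_W, \omega)$, both $(H, g, J)$-style data now come from $(H, g_W, J_W)$ and $(H, g, J)$ sharing the same $\omega = \operatorname{Im}$ part, and I pick $u$ to be a map intertwining $J$ and $J_W$ that is $\omega$-preserving — an isometry of the two real Hilbert structures $g$ and $g_W$ that is complex-linear for the respective $J$'s, which exists for the same dimension-counting reason; one must check it lands in $\Sp(H)$, which holds because $\omega(v,w) = g_W(J_W v, w)$ on one side and $\omega(v,w) = g(Jv,w)$ on the other, and $u$ matches these up.

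For the coset statement (both cases at once): $u(L^{+}) = v(L^{+})$ iff $(v^{-1}u)(L^{+}) = L^{+}$, so it suffices to show that for $w \in \O(H)$ (resp.\ $\Sp(H)$), $w(L^{+}) = L^{+}$ iff $w \in \U(H)$. If $w(L^{+}) = L^{+}$ then also $w(L^{-}) = L^{-}$ since $L^{-} = \alpha(L^{+})$ and $w$ commutes with $\alpha$ (it restricts from $H$), hence $w$ commutes with $J = i(P_{L^+} - P_{L^-})$; a map commuting with $J$ and preserving $g$ (resp.\ $\omega$) also preserves the other form of the triple via $\omega(v,w) = g(Jv,w)$ (resp.\ $g(v,w) = \omega(v, Jw)$) — this is the standard ``two out of three'' mechanism — so $w \in \O(H) \cap \Sp(H) = \U(H)$. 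Conversely if $w \in \U(H)$ then $w$ preserves $J$ (by \cref{exer:unitaries_are_same}, or directly: preserving both $g$ and $\omega$ forces $wJw^{-1} = \ph_g^{-1}\ph_\omega = J$ by \cref{pr:three_properties_compatible}(3)), hence preserves its eigenspaces, so $w(L^{+}) = L^{+}$.

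I expect the main obstacle to be the surjectivity step, specifically the clean construction of the intertwining isometry $u$ in the infinite-dimensional setting and the verification that it is genuinely bounded with bounded inverse (an ``isomorphism'' in the sense of the paper): one must be slightly careful that the two Hilbert structures in play are equivalent (which follows from \cref{pr:StrongEquivalence} applied to $g_W$ vs.\ $g$, or, in the orthogonal case, that $\langle\,,\rangle$ and $\langle\,,\rangle_W$ induce the same norm since they have the same real part) before invoking the existence of a unitary between separable Hilbert spaces of equal dimension. The coset step is routine given the earlier exercises and propositions.
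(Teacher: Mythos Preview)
Your proposal is correct and follows essentially the same strategy as the paper. The paper's proof works directly in $H_{\C}$: it chooses orthonormal bases $\{l_i\}$ for $L^{+}$ and $\{e_i\}$ for $W^{+}$ (with respect to $g$, resp.\ $g_W$), extends by $\alpha$ to bases of $L^{-}$ and $W^{-}$, and lets $u$ be the map $l_i\mapsto e_i$, $\alpha l_i\mapsto \alpha e_i$; your argument is the same construction transported to the real side via the isomorphism $H_J\cong L^{+}$ of \cref{exer:two_models_complexify}, packaged as ``choose a unitary between the two separable complex Hilbert spaces $H_J$ and $H_{J_W}$''. The stabilizer computation is likewise the same in both.
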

\begin{proof}
    An operator $u \in \O(H)$ preserves $L^{+}$ if and only if it commutes with $J$, which it does if and only if $u \in \Sp(H)$; whence the stabilizer of $L^{+} \in \Pol^{g}(H)$ is $\U(H)$.
    It thus remains to be shown that $\O(H)$ acts transitively on $\Pol^{g}(H)$.
    To this end, let $W^{+} \in \Pol^{g}(H)$ be arbitrary.
    Pick orthonormal bases $\{ l_{i} \}$ and $\{ e_{i} \}$ for $L^{+}$ and $W^{+}$ respectively.
    We obtain orthonormal bases $\{ \alpha l_{i} \}$ and $\{ \alpha e_{i} \}$ for $\alpha(L^{+})$ and $\alpha(W^{+})$ respectively.
    We let $u: H_{\C} \rightarrow H_{\C}$ be the complex-linear extension of the map that sends $l_{i}$ to $e_{i}$ and $\alpha l_{i}$ to $\alpha e_{i}$.
    It is then easy to see that $u \in \O(H)$ and $u(L^{+}) = W^{+}$.
    This proves that the map $\O(H) / \U(H) \rightarrow \Pol^{g}(H)$ is a bijection.
    
    The fact that the map $\Sp(H)/\U(H) \rightarrow \Pol^{\omega}(H)$ is a bijection is proved similarly.
\end{proof}
\begin{exercise} \label{exer:stabilizer}
    Prove that the stabilizer of $L^{+} \in \Pol^{\omega}(H)$ is $\U(H)$ and that $\Sp(H)$ acts transitively on $\Pol^{\omega}(H)$.
    Conclude that the map $\Sp(H)/\U(H) \rightarrow \Pol^{\omega}(H)$ is a bijection.
\end{exercise} 

\section{The Grassmannian of polarizations associated to a symplectic form}\label{sec:Bosons}

In this section, we fix a symplectic form, and describe the associated Lagrangian Grassmannian of positive symplectic polarizations. We will show that it is described by a space of operators called the Siegel disk, and that it is a complex Banach manifold.
We also consider the ``restricted'' Siegel disk and restricted symplectic groups. These restricted objects require the extra analytic condition of Hilbert-Schmidt-ness, which plays a central role in representation theory.  We also establish that the restricted Grassmannian is not just a Banach manifold, but a Hilbert manifold.  

Let $H$ be a real Hilbert space, and let $(g,J,\omega)$ be a compatible triple.
We are interested in deformations of $g$, while keeping $\omega$ fixed.
As explained in \cref{sec:Triples}, we can study these deformations by considering the space of symplectic polarizations in $H_{\C}$.

We denote by $P^{\pm}$ the orthogonal projection $H_{\C} \rightarrow L^{\pm}$.

\begin{proposition}\label{pr:CompatInequality} 
    Suppose that $W^{+}\subset H_{\C}$ is a Lagrangian such that $H_{\C} = W^{+} \oplus \alpha(W^{+})$ (not necessarily orthogonal); and set $W^{-} = \alpha(W^{+})$.
    Then, $W^{+}$ is a positive symplectic polarization if and only if $\| P^{+}x \| > \|P^{-} x\|$ for all $0 \neq x \in W^{+}$.
    Moreover, if this holds, then $P^{+}|_{W^{+}}:W^{+} \rightarrow L^{+}$ is a bijection.
\end{proposition}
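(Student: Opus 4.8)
The plan is to translate the positivity condition in Definition~\ref{de:Kahler_pol} into a statement about the projections $P^\pm$, using the standard polarization $H_\C = L^+ \oplus L^-$ as a reference. Fix $x \in W^+$ and write $x = x^+ + x^-$ with $x^\pm = P^\pm x \in L^\pm$. The key computation is to express $-i\omega(x, \alpha x)$ in terms of the sesquilinear extension $g$, exploiting the relations recorded just before Equation~\eqref{eq:standard_pol_def}, namely $\omega(v,w) = g(Jv,\alpha w)$, together with the facts that $L^\pm = \ker(J \mp i)$ and that $L^+ \perp L^-$ and both are $\omega$-Lagrangian (Lemma~\ref{le:standard_polarization_all_properties}). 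Since $\alpha$ swaps $L^+$ and $L^-$, we have $\alpha x = \alpha x^+ + \alpha x^-$ with $\alpha x^+ \in L^-$ and $\alpha x^- \in L^+$.

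**Main computation.** First I would compute $-i\omega(x,\alpha x) = -i\,g(Jx, \alpha \alpha x) = -i\,g(Jx, x)$. Now $Jx = i x^+ - i x^-$, so $-i\,g(Jx,x) = -i\,g(ix^+ - ix^-,\, x^+ + x^-) = -i\big(i\,g(x^+,x^+) + i\,g(x^+,x^-) - i\,g(x^-,x^+) - i\,g(x^-,x^-)\big)$. Using orthogonality $g(x^+,x^-) = g(x^-,x^+) = 0$ from Lemma~\ref{le:standard_polarization_all_properties}, this collapses to $-i \cdot i\big(g(x^+,x^+) - g(x^-,x^-)\big) = \|P^+ x\|^2 - \|P^- x\|^2$, where $\|\cdot\|$ is the norm coming from $g$. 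Therefore $-i\omega(x,\alpha x) > 0$ for all $0 \neq x \in W^+$ if and only if $\|P^+ x\| > \|P^- x\|$ for all such $x$; and one should also check the diagonal case $-i\omega(x,\alpha x) \geq 0$ is equivalent to $\|P^+x\| \geq \|P^-x\|$, and sesquilinearity/Hermitian symmetry of $-i\omega(\cdot,\alpha\cdot)$ on $W^+$, which is immediate from bilinearity of $\omega$ and $\alpha$ being a conjugate-linear involution. This establishes the first assertion, modulo confirming that the $W^+$ appearing in the definition is automatically Lagrangian here (it is, by hypothesis) and that $\alpha(W^+) = W^-$ (true by construction).

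**Bijectivity of $P^+|_{W^+}$.** For the "moreover" clause, suppose the inequality holds. Injectivity of $P^+|_{W^+}$ is immediate: if $P^+ x = 0$ for $x \in W^+$ then $\|P^+x\| = 0$, forcing $\|P^-x\| < 0$ unless $x = 0$, hence $x = 0$. For surjectivity onto $L^+$, I would argue as follows: $P^+|_{W^+}$ is a bounded injective operator $W^+ \to L^+$; the inequality $\|P^+x\| > \|P^-x\|$ combined with $\|x\|^2 = \|P^+x\|^2 + \|P^-x\|^2$ (orthogonality of the reference decomposition) gives $\|x\|^2 < 2\|P^+x\|^2$, so $\|P^+x\| \geq \tfrac{1}{\sqrt 2}\|x\|$, i.e.\ $P^+|_{W^+}$ is bounded below and hence has closed range. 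It remains to see the range is dense, equivalently all of $L^+$. Here I would use $H_\C = W^+ \oplus \alpha(W^+)$: given $\ell \in L^+$, write $\ell = w + \alpha w'$ with $w, w' \in W^+$, apply $P^+$ to get $\ell = P^+ w + P^+\alpha w'$; since $\alpha w' \in \alpha(W^+) = W^-$ and applying $\alpha$ to the inequality for $w'$ shows $\|P^-\alpha w'\| > \|P^+ \alpha w'\|$ when $w' \neq 0$ (because $\alpha$ is a $g$-isometry swapping $L^\pm$), I can bound the "error" term $\|P^+\alpha w'\|$ and iterate, or more cleanly observe directly that $P^+|_{W^-}: W^- \to L^+$ is a strict contraction relative to $P^+|_{W^+}$, so a Neumann-series / fixed-point argument recovers any target in $L^+$.

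**Anticipated obstacle.** The algebra in the main computation is routine; the genuinely delicate point is the surjectivity of $P^+|_{W^+}$ onto all of $L^+$ in the infinite-dimensional setting, where bounded-below only yields closed range and one must separately rule out a proper closed range. I expect the cleanest route is to package the four operators $P^\pm|_{W^\pm}$ into a block form and use that $H_\C = W^+ \oplus W^-$ together with the strict inequality to show the relevant "off-diagonal" block is a strict contraction, making $P^+|_{W^+}$ invertible by a Neumann series; getting the contraction estimate uniform (not just pointwise strict) is the step that needs care, and it follows from the closed-range/bounded-below estimate applied on the unit sphere of $W^-$ via the isometry $\alpha$.
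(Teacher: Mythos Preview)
Your main computation expressing $-i\omega(x,\alpha x)$ as $\|P^+x\|^2 - \|P^-x\|^2$ is correct and is essentially the paper's computation (the paper packages it as $g_W(x,x) = g((P^+ - P^-)x, x)$). Likewise, your injectivity and closed-range argument via $\|P^+x\|^2 \geq \tfrac{1}{2}\|x\|^2$ matches the paper exactly.

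The gap is in your surjectivity argument. The pointwise inequality $\|P^+x\| > \|P^-x\|$ on $W^+$ only yields the \emph{non-strict} bound $\|P^+x\| \geq \tfrac{1}{\sqrt 2}\|x\|$; transporting by the isometry $\alpha$ to $W^-$ gives $\|P^-v\| \geq \tfrac{1}{\sqrt 2}\|v\|$ and hence $\|P^+v\| \leq \tfrac{1}{\sqrt 2}\|v\|$ for $v \in W^-$, again non-strict. So the ``closed-range/bounded-below estimate applied on the unit sphere of $W^-$'' does \emph{not} by itself produce a contraction constant strictly below $\tfrac{1}{\sqrt 2}$, and the Neumann-series step as you justify it does not close. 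A uniform gap does in fact exist here, but it comes from a different source: the topological direct-sum hypothesis $H_\C = W^+ \oplus \alpha(W^+)$ makes the associated projections bounded, and then the discussion after Definition~\ref{de:Kahler_pol} shows that $-i\omega(\cdot,\alpha\cdot)$ is a \emph{strong} inner product on $W^+$, hence equivalent to $g$ by Proposition~\ref{pr:StrongEquivalence}. That equivalence is what forces $\|P^+x\|^2 - \|P^-x\|^2 \geq c\|x\|^2$ for some $c>0$; it is not a consequence of the pointwise inequality alone.

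The paper sidesteps this analysis entirely by exploiting the Lagrangian hypothesis, which your approach never uses. If $y \in L^+$ is orthogonal to the range of $P^+|_{W^+}$, then $g(y,x)=g(y,P^+x)=0$ for every $x\in W^+$; since $y\in L^+$ this rewrites as $\omega(y,\alpha x)=0$ for all $x\in W^+$, so $y$ lies in the symplectic complement of $W^-$. Because $W^-$ is Lagrangian, that complement is $W^-$ itself, whence $y\in L^+\cap W^-$ and $\alpha y\in L^-\cap W^+\subseteq\ker(P^+|_{W^+})=0$. This two-line argument is shorter and uses the symplectic structure in an essential way.
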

\begin{proof}
    We recall that the decomposition $H_{\C} = W^{+} \oplus W^{-}$ allows us to define a complex structure $J_{W}$ and a sesquilinear form $g_{W}$, see \cref{def:ComplexPlusSesqui}.
    Let $x \in W^{+}$.
    We then compute
    \begin{equation*}
        g_{W}(x,x) = \omega(x, \alpha J_{W} x) = \omega(x, \alpha ix) = g(Jx, ix) = g((P^{+}-P^{-})x,x) = \| P^{+} x\|^{2} - \|P^{-}x \|^{2}.
    \end{equation*}
    We thus see that $g_{W}$ is positive definite if and only if $\|P^{+}x \|^{2} > \|P^{-}x \|^{2}$ for all $0 \neq x \in W^{+}$. 
    Repeating the proof of Proposition \ref{pr:SubspaceToInner} we see that $J_W$ and $g_W$ restrict to the real space. 
    Furthermore, we have that the restriction of $g_W$ to the real space is strong, because $\ph_{g_W}= \ph_{\omega} J_W$.   
    
    Now, suppose that $\|P^{+}x \|^{2} > \|P^{-}x\|^{2}$ for all $0 \neq x \in W^{+}$.
    We then have, for all $x \in W^{+}$
    \begin{equation*}
        \| x \|^{2} = \|P^{+} x\|^{2} + \|P^{-}x\|^{2} \leqslant 2 \|P^{+}x \|^{2},
    \end{equation*}
    whence $P^{+}|_{W^{+}}$ is bounded below, and thus injective with closed image.
    We now claim that $P^{+}W^{+}$ is dense in $L^{+}$.
    Indeed, suppose that $y \in (P^{+}W^{+})^{\perp} \subseteq L^{+}$.
    We then have, for all $x \in W^{+}$
    \begin{equation*}
        0 = g(y, P^{+}x) = g(y, x) = g(y, (P_{W}^{+} - P_{W}^{-})x) = ig(y, J_{u}x) = \omega(y, \alpha x)
    \end{equation*}
    which implies that $y$ is in the symplectic complement of $W^{-}$, which is equal to $W^{-}$.
    This means that $\alpha y \in L^{-} \cap W^{+}$, but this intersection is zero, because it is contained in the kernel of $P^{+}|_{W^{+}}$, which is zero.
    We conclude that $P^{+}|_{W^{+}}$ is a bijection, and thus invertible.  
\end{proof}
\begin{exercise}\label{exer:VerifyGraphGivesSubspace}
    Show that if $W^{+} \subset H_{\C}$ is a positive symplectic polarization, then $W^{+} = \mathrm{graph}(Z)$, where $Z = P^{-}(P^{+}|_{W^{+}})^{-1}: L^{+} \rightarrow L^{-}$.
\end{exercise}

\begin{corollary}\label{cor:GraphOperator}
    If $W^{+} \subset H_{\C}$ is a positive symplectic polarization, then the operator $Z = P^{-}(P^{+}|_{W^{+}})^{-1}$ is the unique operator such that $W^{+} = \operatorname{graph}(Z)$.
\end{corollary}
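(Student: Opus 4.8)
The plan is to extract everything from Exercise \ref{exer:VerifyGraphGivesSubspace} and Proposition \ref{pr:CompatInequality}: the former already tells us that $W^{+} = \mathrm{graph}(Z)$ for this particular $Z = P^{-}(P^{+}|_{W^{+}})^{-1}$, so the only remaining content of the corollary is the \emph{uniqueness} assertion, namely that no other operator $L^{+} \to L^{-}$ has $W^{+}$ as its graph. First I would recall that Proposition \ref{pr:CompatInequality} guarantees $P^{+}|_{W^{+}} : W^{+} \to L^{+}$ is a bijection, so $Z$ is a well-defined bounded operator $L^{+} \to L^{-}$, and that $\mathrm{graph}(Z) = \{\, x + Zx : x \in L^{+} \,\} \subseteq L^{+} \oplus L^{-} = H_{\C}$.

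The key observation is the general and elementary fact that a subspace $V \subseteq L^{+} \oplus L^{-}$ is the graph of \emph{at most one} linear map $L^{+} \to L^{-}$: if $V = \mathrm{graph}(Z_{1}) = \mathrm{graph}(Z_{2})$, then for each $\ell \in L^{+}$ the element $\ell + Z_{1}\ell$ lies in $V = \mathrm{graph}(Z_{2})$, and since the $L^{+}$-component of any element of $\mathrm{graph}(Z_{2})$ determines its $L^{-}$-component, we must have $Z_{1}\ell = Z_{2}\ell$; as $\ell$ was arbitrary, $Z_{1} = Z_{2}$. (Concretely: apply $P^{-}$ to the identity $\ell + Z_{1}\ell = \ell + Z_{2}\ell$.) This uses only that $L^{+} \oplus L^{-}$ is a direct sum, so that the projections $P^{\pm}$ are well-defined.

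So the proof is just two steps: (i) cite Exercise \ref{exer:VerifyGraphGivesSubspace} for existence of the representation $W^{+} = \mathrm{graph}(Z)$ with the stated $Z$, noting that $Z$ is well-defined by Proposition \ref{pr:CompatInequality}; (ii) prove uniqueness by the one-line argument above. I do not anticipate any genuine obstacle here — the statement is essentially a repackaging of the preceding exercise plus the triviality that a graph determines its defining map. The only point worth stating carefully is \emph{why} $Z$ is a legitimate (bounded, everywhere-defined) operator, which is exactly the bijectivity of $P^{+}|_{W^{+}}$ furnished by Proposition \ref{pr:CompatInequality}; everything else is formal.
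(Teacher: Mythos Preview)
Your proposal is correct and follows essentially the same approach as the paper: the paper's proof simply notes that any operator is uniquely determined by its graph (your step (ii)), so only existence is at issue, and that is precisely \cref{exer:VerifyGraphGivesSubspace} (your step (i)). Your additional remark that well-definedness of $Z$ rests on the bijectivity furnished by \cref{pr:CompatInequality} is a welcome clarification but not a departure from the paper's argument.
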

\begin{proof}
    Any operator is uniquely defined by its graph, so the only question is existence; since we have provided an expression for $Z$, all that remains to be shown is that this does trick, which is \cref{exer:VerifyGraphGivesSubspace}.
\end{proof}

Now, we would like to determine for which operators $Z: L^{+} \rightarrow L^{-}$, we have that $W_{Z} = \operatorname{graph}(Z) \in \Pol^{\omega}(H)$.
\begin{lemma}\label{lem:GraphLagrangianCondition}
    The subspace $W_{Z}$ is Lagrangian if and only if $Z = \alpha Z^{*} \alpha$.
\end{lemma}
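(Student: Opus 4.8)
The plan is to translate the Lagrangian condition on $W_Z = \operatorname{graph}(Z)$ into an identity on $Z$ by evaluating the complex-bilinear extension of $\omega$ on pairs of elements of $W_Z$, and then using the relation between $\omega$ and $g$ on the pieces $L^{\pm}$ together with the fact that $\alpha$ swaps $L^+$ and $L^-$. First I would record the basic data: a generic element of $W_Z$ is $x + Zx$ with $x \in L^+$ and $Zx \in L^-$, so that $W_Z$ is isotropic precisely when $\omega(x + Zx,\, y + Zy) = 0$ for all $x,y \in L^+$. Since $L^+$ and $L^-$ are each Lagrangian for $\omega$ by \cref{le:standard_polarization_all_properties}, the terms $\omega(x,y)$ and $\omega(Zx,Zy)$ vanish, leaving the condition $\omega(x, Zy) + \omega(Zx, y) = 0$, i.e. $\omega(x,Zy) = \omega(Zy, x)$ after using antisymmetry — more cleanly, $\omega(x, Zy) = -\omega(Zx,y)$ for all $x,y \in L^+$. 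Once isotropy is established, maximality (hence the genuinely Lagrangian property) is automatic: $W_Z$ and $\alpha(W_Z)$ are both isotropic and $H_\C = W_Z \oplus \alpha(W_Z)$ by hypothesis, so \cref{lem:maximalMeansLagrangian} (or directly the definition of Lagrangian) applies. So the whole content is the isotropy identity.

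Next I would convert $\omega(x, Zy) = -\omega(Zx, y)$ into the stated operator identity $Z = \alpha Z^* \alpha$. The tool is the pair of identities from the compatible triple extended to $H_\C$, namely $g(v,w) = \omega(v, J\alpha w)$ and $\omega(v,w) = g(Jv, \alpha w)$, together with the fact that on $L^+$ we have $J = i$ and on $L^-$ we have $J = -i$, and $\alpha$ interchanges $L^\pm$. Given $x,y \in L^+$, so $Zy \in L^-$, I would write $\omega(x, Zy) = g(Jx, \alpha Z y) = i\, g(x, \alpha Z y)$, using $\alpha Z y \in L^+$. Similarly $\omega(Zx, y) = g(J Z x, \alpha y)= -i\, g(Zx, \alpha y)$, using $Zx \in L^-$ so $JZx = -iZx$, and $\alpha y \in L^-$. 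The isotropy condition $\omega(x,Zy) + \omega(Zx,y) = 0$ thus becomes $i\, g(x, \alpha Z y) - i\, g(Zx, \alpha y) = 0$, i.e. $g(x, \alpha Z y) = g(Zx, \alpha y)$ for all $x, y \in L^+$. Now $g$ restricted appropriately is the (sesquilinear) inner product, and I can move operators across it using adjoints: $g(Zx, \alpha y) = g(x, Z^* \alpha y)$, while the left side is already $g(x, \alpha Z y)$. Hence the condition is $g(x, \alpha Z y) = g(x, Z^* \alpha y)$ for all $x \in L^+$ (and all $y \in L^+$), and since $g$ is nondegenerate on $L^+$ this gives $\alpha Z y = Z^* \alpha y$ as elements of $L^+$, for all $y \in L^+$. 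Applying $\alpha$ (an involution) yields $Z y = \alpha Z^* \alpha y$ for all $y$, i.e. $Z = \alpha Z^* \alpha$. Each step is reversible, so the equivalence follows.

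The one point requiring care — and the likely main obstacle — is bookkeeping with the adjoints and the domains/codomains of the conjugate-linear operator $\alpha$ and the complex-linear operator $Z$: $Z: L^+ \to L^-$, so $Z^*: L^- \to L^+$, and $\alpha: L^+ \to L^-$ and $L^- \to L^+$, so the composite $\alpha Z^* \alpha: L^+ \to L^-$ indeed lives in the same space as $Z$, as it must. I would also need to be slightly careful about where conjugate-linearity of $\alpha$ forces complex conjugates to appear when pulling scalars through $g$; since $g$ is extended sesquilinearly, the identity $g(v, \alpha w)$ is complex-bilinear in $(v,w)$ (the two conjugations cancel), which is exactly what makes the computation with $\omega$ work cleanly, and I would state this explicitly to justify treating $g(x, \alpha Z y)$ as linear in $y$. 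Beyond this, the argument is a routine unwinding of definitions, and I expect no deeper difficulty.
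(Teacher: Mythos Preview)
Your isotropy computation is correct and essentially matches the paper's: expanding $\omega(x+Zx,y+Zy)$, using that $L^\pm$ are Lagrangian, and converting via $\omega(v,w)=g(Jv,\alpha w)$ yields exactly $\alpha Z = Z^*\alpha$, i.e.\ $Z=\alpha Z^*\alpha$, and this step is reversible, so isotropy of $W_Z$ is equivalent to the stated identity.

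The gap is in passing from isotropic to Lagrangian. You write that ``$H_\C = W_Z \oplus \alpha(W_Z)$ by hypothesis'', but no such hypothesis is in force: the lemma is stated for an arbitrary bounded $Z:L^+\to L^-$, and the decomposition $H_\C = W_Z \oplus \alpha(W_Z)$ is \emph{not} automatic from $Z=\alpha Z^*\alpha$. Concretely, with $\dim L^+=1$ every $Z$ satisfies $Z=\alpha Z^*\alpha$, and when $|z|=1$ (writing $Z$ as multiplication by $z$) one has $W_Z=\alpha(W_Z)$, so the two do not span. More generally the obstruction is invertibility of $\1-ZZ^*$, which is precisely what \cref{lem:PositiveGraphCondition} addresses separately. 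So you cannot invoke the definition of Lagrangian via a complementary isotropic this way.

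The paper closes this gap differently: assuming $Z=\alpha Z^*\alpha$ (hence $W_Z$ isotropic), it takes an arbitrary $z=z^++z^-$ in the symplectic complement of $W_Z$, unwinds $\omega(x+Zx,z)=0$ for all $x\in L^+$ to obtain $z^-=Zz^+$, and concludes $z\in W_Z$. Thus $W_Z$ equals its own symplectic complement, hence is maximal isotropic, and \cref{lem:maximalMeansLagrangian} gives Lagrangian. You should replace your decomposition claim with this maximal-isotropy argument.
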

\begin{proof}
    First, assume that $W_{Z}$ is Lagrangian.
    This means that it is in particular isotropic.
    We thus compute, for arbitrary $x,y \in L^{+}$
    \begin{align} \label{eq:lagrangian_condition_Z} 
        0 = \omega(x+Zx,y+Zy) &= \omega(x,Zy) + \omega(Zx,y)  \nonumber \\
        &= g(Jx, \alpha Zy) + g(JZx, \alpha y) \nonumber \\
        &= -g(x, (J \alpha Z + Z^{*} J \alpha) y),
    \end{align}
    and thus $Z = \alpha J Z^{*} J \alpha$.
    We have $\alpha J Z^{*} J \alpha = \alpha Z^{*} \alpha$, because the domain of $Z$ is $L^{+}$.
    
    Now, assume that $Z = \alpha J Z^{*} J \alpha$.
    The computation above then implies that $W_{Z}$ is isotropic.
    It remains to be shown that $W_{Z}$ is Lagrangian.
    To that end, suppose that $z = z^{+} + z^{-} \in L^{+} \oplus L^{-}$ satisfies $\omega(z, w) = 0$ for all $w \in W_{Z}$.
    We then have, for all $x \in L^{+}$,
    \begin{align*}
        0 = \omega(x+Zx ,z^{+} + z^{-}) &= \omega(Zx,z^{+}) + \omega(x,z^{-}) \\
        &= g(JZx,\alpha z^{+}) + g(Jx,\alpha z^{-}) \\
        &= -g(x, Z^{*}J \alpha z^{+}) - g(x, J\alpha z^{-}),
    \end{align*}
    thus $Z^{*}J \alpha z^{+} + J \alpha z^{-} = 0$, which implies $z^{-} = \alpha J Z^{*}J \alpha z^{+} = Jz^{+}$, whence $z^{+} + z^{-} \in W_{Z}$.
    This implies that $W_{Z}$ is maximal isotropic, whence it is Lagrangian, by \cref{lem:maximalMeansLagrangian}.
\end{proof}

\begin{lemma}\label{lem:PositiveGraphCondition}
    We have $\1 - Z^{*}Z > 0$ if and only if $W_{Z} \cap \alpha(W_{Z}) = \{ 0 \}$ and $W_Z \in \Pol^\omega(H)$. 
\end{lemma}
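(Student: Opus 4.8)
The plan is to work with the graph description $W_Z = \operatorname{graph}(Z)$ for $Z: L^+ \to L^-$ and translate the two conditions on the right-hand side into norm estimates, using \cref{pr:CompatInequality} as the bridge. The key observation is that a general element of $W_Z$ has the form $x = x^+ + Zx^+$ with $x^+ \in L^+$, so that $P^+ x = x^+$ and $P^- x = Zx^+$; hence the inequality $\|P^+ x\| > \|P^- x\|$ for all $0 \neq x \in W_Z$ is literally the statement that $\|x^+\|^2 > \|Zx^+\|^2$ for all $0 \neq x^+ \in L^+$, which in turn is exactly $g(x^+, (\1 - Z^*Z)x^+) > 0$ for all $0 \neq x^+ \in L^+$, i.e. $\1 - Z^*Z > 0$.

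First I would establish the direction assuming $\1 - Z^* Z > 0$. Positivity (and in particular $\1 - Z^*Z \geq \epsilon \1$ is \emph{not} automatic, so I should be slightly careful: $\1 - Z^*Z > 0$ as used here should be read together with the surrounding discussion, but in any case the strict inequality $\|x^+\| > \|Zx^+\|$ for all nonzero $x^+$ gives what we need for injectivity of $P^+|_{W_Z}$, and I'd note $\|Z\| \leq 1$ from $\1 - Z^*Z \geq 0$). From $\|Zx^+\| < \|x^+\|$ we first get $W_Z \cap \alpha(W_Z) = \{0\}$: any nonzero element $x$ of this intersection would lie in $W_Z$, and applying $\alpha$ to the fact that $x \in \alpha(W_Z)$, i.e. $\alpha x \in W_Z$, one compares $\|P^+ x\|$ with $\|P^- x\|$ and uses $\alpha(L^\pm) = L^\mp$ together with the isometry property of $\alpha$ to derive the reverse inequality $\|P^- x\| > \|P^+ x\|$, a contradiction unless $x = 0$. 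Concretely, if $x = x^+ + Zx^+ \in W_Z$ then $\alpha x = \alpha x^+ + \alpha Z x^+$ with $\alpha x^+ \in L^-$ and $\alpha Z x^+ \in L^+$; for $\alpha x$ to lie in $W_Z$ we need $\alpha x^+ = Z(\alpha Z x^+)$, and then $\|\alpha x^+\| = \|Z \alpha Z x^+\| < \|\alpha Z x^+\| = \|Z x^+\| < \|x^+\| = \|\alpha x^+\|$, forcing $x^+ = 0$. Then, to conclude $W_Z \in \Pol^\omega(H)$, I need $W_Z$ to be Lagrangian and to satisfy the positivity condition of \cref{de:Kahler_pol}; the former is \emph{not} implied by $\1 - Z^*Z > 0$ alone — it requires $Z = \alpha Z^* \alpha$ from \cref{lem:GraphLagrangianCondition} — so I suspect the intended reading is that the lemma is applied in a context where $W_Z$ is already known (or separately assumed) to be Lagrangian, or else the statement $W_Z \in \Pol^\omega(H)$ is meant to be conditional on that. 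Assuming $W_Z$ Lagrangian, \cref{pr:CompatInequality} combined with the norm inequality $\|P^+ x\| > \|P^- x\|$ gives directly that $W_Z$ is a positive symplectic polarization, i.e. $W_Z \in \Pol^\omega(H)$.

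For the converse, suppose $W_Z \in \Pol^\omega(H)$ and $W_Z \cap \alpha(W_Z) = \{0\}$. Then \cref{pr:CompatInequality} applies and yields $\|P^+ x\| > \|P^- x\|$ for all $0 \neq x \in W_Z$; specializing to $x = x^+ + Zx^+$ gives $\|x^+\|^2 > \|Zx^+\|^2$ for all nonzero $x^+ \in L^+$, which is precisely $g(x^+, (\1 - Z^*Z) x^+) > 0$, hence $\1 - Z^*Z > 0$. The main obstacle I anticipate is the Lagrangian/Hilbert-direct-sum bookkeeping in the first direction: making sure that $H_\C = W_Z \oplus \alpha(W_Z)$ really is a (topological) direct sum so that \cref{pr:CompatInequality} is applicable — this uses that $P^+|_{W_Z}$ is bounded below (from $\|x\|^2 \leq 2\|P^+ x\|^2$) hence has closed range, plus a density argument showing $P^+ W_Z = L^+$, paralleling the proof of \cref{pr:CompatInequality} — and being precise about what "$\1 - Z^*Z > 0$" is taken to mean (positive definite, not necessarily bounded below) versus what the direct-sum decomposition demands.
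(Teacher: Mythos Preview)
Your approach is essentially the paper's: both directions go through \cref{pr:CompatInequality} via the identity $\|P^{+}w\|^{2} - \|P^{-}w\|^{2} = g(x,(\1 - Z^{*}Z)x)$ for $w = x + Zx$, which is exactly the paper's \cref{eq:PositivityRelation}. The one small difference is in the argument for $W_{Z} \cap \alpha(W_{Z}) = \{0\}$: the paper first rewrites the intersection condition as $x = \alpha Z \alpha Z x$ and then invokes the Lagrangian identity $Z^{*} = \alpha Z \alpha$ (from \cref{lem:GraphLagrangianCondition}) to read this as $(\1 - Z^{*}Z)x = 0$, contradicting positivity; your chain of two strict norm inequalities reaches the same contradiction without appealing to that identity. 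Your suspicion that the Lagrangian condition on $Z$ is an implicit standing hypothesis is correct---the paper uses it without comment in \cref{eq:ZtZ}---and your worry about whether $H_{\C} = W_{Z} \oplus \alpha(W_{Z})$ holds as a topological direct sum before \cref{pr:CompatInequality} can be invoked is a legitimate subtlety that the paper's own proof also leaves implicit.
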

\begin{proof}
    First, assume that $\1 - Z^{*} Z > 0$.
    We have, for all $0 \neq x \in L^{+}$ that
    \begin{equation}\label{eq:ZtZ}
        x \neq Z^{*}Zx = \alpha Z \alpha Zx.
    \end{equation}
    Now, we have that $\alpha(W_{Z}) = \operatorname{graph}(\alpha Z \alpha)$, from which it follows that if $z \in W_{Z} \cap \alpha(W_{Z})$, then there exist $x \in L^{+}$ and $y \in L^{-}$ such that $(x,Zx) = z = (\alpha Z \alpha y, y)$.
    This implies that $x = \alpha Z \alpha Z x$, whence $x=0$ by \cref{eq:ZtZ}; and thus $W_{Z} \cap \alpha (W_{Z}) = \{ 0 \}$.
    
    Now, we compute, for arbitrary $0 \neq w = x+Zx \in W_{Z}$
    \begin{equation}\label{eq:PositivityRelation}
        \| P^{+} w \|^{2} - \| P^{-} w \|^{2} =  \|x \|^{2} - \|Zx \|^{2} = g(x,x) - g(x, Z^{*}Zx) = g(x, (\1 - Z^{*}Z)x)
    \end{equation}
    Now, we wish to show that $W_Z \in \Pol^\omega(H)$. 
    According to \cref{pr:CompatInequality}, it suffices to show that $\| P^{+}w \|^{2} > \|P^{-}w \|^{2}$ for all $0 \neq w \in W_{Z}$, this follows from the positivity of $\1 - Z^{*}Z$ through \cref{eq:PositivityRelation}.

Now, suppose that $W_{Z} \in \Pol^\omega(H)$ and 
set $g_{Z} = g_{W_{Z}}$ and $J_{Z} = J_{W_{Z}}$. \cref{eq:PositivityRelation} also implies (through \cref{pr:CompatInequality}) that, then $\1 - Z^{*}Z$ is positive.
\end{proof}

Set
\begin{equation}\label{eq:SiegelDiskDef}
    \mathfrak{D}(H) \defeq \{ Z \in \mc{B}(L^{+},L^{-}) \mid Z = \alpha Z^{*} \alpha, \text{ and } \1 - Z^{*}Z > 0 \}.
\end{equation}
This is called the Siegel disk, originating with C. L. Siegel \cite{siegel_symplectic_1943}.
    The Siegel disk in the setting of Hilbert spaces was defined by G. Segal \cite{segal_unitary_1981}. There, the extra condition that $Z$ is Hilbert-Schmidt was added, resulting in what we call the restricted Siegel disk below. 

The preceding discussion is now nicely summarized by the following result.
\begin{proposition}\label{prop:GrassmannianIsSiegelDisk}
    The maps
    \begin{align*}
        \Pol^{\omega}(H) & \rightarrow \mathfrak{D}(H), & \mathfrak{D}(H) & \rightarrow \Pol^{\omega}(H), \\
        W &\mapsto P^{-}(P^{+}|_{W})^{-1}, & Z & \mapsto \operatorname{graph}(Z),
    \end{align*}
    are well-defined, and each others' inverses.
\end{proposition}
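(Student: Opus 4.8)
The plan is to assemble the proposition directly from the lemmas that have already been proved, so that the argument is essentially a matter of checking that the two maps land in the right sets and compose to the identity on each side. First I would verify the forward map $W \mapsto P^{-}(P^{+}|_{W})^{-1}$ is well-defined: if $W \in \Pol^{\omega}(H)$, then by \cref{pr:CompatInequality} the operator $P^{+}|_{W}: W \to L^{+}$ is a bijection (and bounded below, hence a bounded isomorphism by the open mapping theorem), so $Z_W \defeq P^{-}(P^{+}|_{W})^{-1}: L^{+} \to L^{-}$ is a well-defined bounded operator. By \cref{exer:VerifyGraphGivesSubspace} (equivalently \cref{cor:GraphOperator}) we have $W = \operatorname{graph}(Z_W)$; since $W$ is Lagrangian, \cref{lem:GraphLagrangianCondition} gives $Z_W = \alpha Z_W^{*} \alpha$, and since $W \in \Pol^{\omega}(H)$ implies $W \cap \alpha(W) = \{0\}$ together with positivity, \cref{lem:PositiveGraphCondition} gives $\1 - Z_W^{*}Z_W > 0$. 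Hence $Z_W \in \mathfrak{D}(H)$.

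Next I would verify the backward map $Z \mapsto \operatorname{graph}(Z)$ is well-defined. Given $Z \in \mathfrak{D}(H)$, the condition $Z = \alpha Z^{*}\alpha$ together with \cref{lem:GraphLagrangianCondition} shows $W_Z \defeq \operatorname{graph}(Z)$ is Lagrangian, and $W_Z$ is automatically a closed complement of something; the key point is that $H_{\C} = W_Z \oplus \alpha(W_Z)$. For this I would note $\alpha(W_Z) = \operatorname{graph}(\alpha Z \alpha)$, so $W_Z \cap \alpha(W_Z) = \{0\}$ follows from $\1 - Z^{*}Z > 0$ exactly as in the first half of \cref{lem:PositiveGraphCondition}; and the sum is all of $H_{\C}$ because given $(a,b) \in L^{+} \oplus L^{-}$ one solves for the $W_Z$- and $\alpha(W_Z)$-components explicitly, the solvability reducing to invertibility of $\1 - Z^{*}Z$ (more precisely $\1 - \alpha Z \alpha Z$ and its adjoint-conjugate), which is guaranteed since a positive operator of the form $\1 - T$ with $T \geqslant 0$ and $\1 - T > 0$ is invertible when $\|T\| < 1$ — and here $\|Z^{*}Z\| < 1$ follows from $\1 - Z^{*}Z > 0$ on a Hilbert space. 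Then \cref{lem:PositiveGraphCondition} directly yields $W_Z \in \Pol^{\omega}(H)$.

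Finally I would check the two compositions. Starting from $Z \in \mathfrak{D}(H)$, forming $W_Z = \operatorname{graph}(Z)$ and then $Z_{W_Z} = P^{-}(P^{+}|_{W_Z})^{-1}$: since $P^{+}|_{W_Z}$ sends $x + Zx \mapsto x$ and $P^{-}|_{W_Z}$ sends $x + Zx \mapsto Zx$, we get $Z_{W_Z}(x) = P^{-}((P^{+}|_{W_Z})^{-1}x) = P^{-}(x + Zx) = Zx$, so $Z_{W_Z} = Z$. Starting from $W \in \Pol^{\omega}(H)$, we have $W = \operatorname{graph}(Z_W)$ by \cref{exer:VerifyGraphGivesSubspace}, so $\operatorname{graph}(Z_W) = W$. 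Both compositions are the identity, proving the maps are mutually inverse bijections.

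The main obstacle I expect is not any single deep step but the bookkeeping around well-definedness of the backward map, specifically confirming that $\operatorname{graph}(Z) \oplus \alpha(\operatorname{graph}(Z)) = H_{\C}$ as an internal direct sum of closed subspaces. The Lagrangian and positivity conditions from \cref{lem:GraphLagrangianCondition} and \cref{lem:PositiveGraphCondition} handle most of this — indeed \cref{lem:PositiveGraphCondition} is stated so that $\1 - Z^{*}Z > 0$ is \emph{equivalent} to $W_Z \cap \alpha(W_Z) = \{0\}$ together with $W_Z \in \Pol^{\omega}(H)$, which already packages the direct-sum decomposition into the definition of $\Pol^{\omega}(H)$. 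So in fact the cleanest route is simply to invoke \cref{lem:PositiveGraphCondition} wholesale for the backward direction and \cref{exer:VerifyGraphGivesSubspace} and \cref{lem:PositiveGraphCondition} for the forward direction, leaving only the trivial graph computations above for the inverse checks.
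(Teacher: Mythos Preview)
Your proposal is correct and follows essentially the same route as the paper: invoke \cref{pr:CompatInequality}, \cref{cor:GraphOperator}, \cref{lem:GraphLagrangianCondition}, and \cref{lem:PositiveGraphCondition} to verify well-definedness in both directions, then use \cref{cor:GraphOperator} (or the explicit graph computation you give) for the inverse property. Your middle digression on the direct-sum decomposition is unnecessary, as you yourself conclude---and in fact the claim there that $\1 - Z^{*}Z > 0$ forces $\|Z^{*}Z\| < 1$ is false in general for bounded operators (consider a diagonal operator with eigenvalues $1 - 1/n$), so you are right to fall back on \cref{lem:PositiveGraphCondition} wholesale.
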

\begin{proof}
    By \cref{pr:CompatInequality} we know that if $W \in \Pol^{\omega}(H)$, then $(P^{+}|_{W})^{-1}$ exists.
    By \cref{cor:GraphOperator} we then have that $W = \mathrm{graph}(P^{-}(P^{+}|_{W})^{-1})$.
    From \cref{lem:GraphLagrangianCondition,lem:PositiveGraphCondition} it then follows that $P^{-}(P^{+}|_{W})^{-1} \in \mathfrak{D}(H)$.
    Conversely, it follows from \cref{lem:GraphLagrangianCondition,lem:PositiveGraphCondition} that if $Z \in \mathfrak{D}(H)$, then $\operatorname{graph}(Z) \in \Pol^{\omega}(H)$.
    That these maps are each others inverses is a straightforward consequence of \cref{cor:GraphOperator}.
\end{proof}

\begin{remark}  \label{re:Siegel_upper_half}
    Another model of $\Pol^\omega(H)$ is given by the Siegel upper half space.  This is equivalent but based on different conventions. Rather than viewing positive symplectic polarizations as a variation of a fixed polarization $L^+ \oplus L^-$, instead one fixes a real Lagrangian decomposition. More precisely, we say that a Lagrangian subspace $L$ in $H_{\mathbb{C}}$ is real if it is the complexification of a Lagrangian subspace in $H$. Fix a decomposition $H_{\mathbb{C}} = L \oplus L'$ where $L$ and $L'$ are transverse real Lagrangian spaces.  It can be shown that any $W \in \Pol^\omega(H)$ is transverse to $L$ and $L'$, and can be uniquely expressed as the graph of some $Z:L \rightarrow L'$. This $Z$ satisfies $Z^T = Z$ and $\mathrm{Im}(Z)$ is positive definite. The set of such matrices is called the Siegel upper half space $\mathfrak{H}(H)$. These two conditions can be given meaning either through the use of a specific basis, or by $Z^T:= \alpha Z^* \alpha$ and $\mathrm{Im}(Z)  := (1/2i) (Z - \alpha \, Z \, \alpha)$.  For details, see \cite{berndt_introduction_2001,vaisman_symplectic_1987,siegel_symplectic_1943}. 
\end{remark} 

\begin{example}
   Here we refer to Example \ref{ex:polarization_compact_surface}.
   Let $\mathscr{R}$ be a compact Riemann surface, and $\mathcal{A}(\mathscr{R}) \oplus \overline{\mathcal{A}(\mathscr{R})}$ the decomposition of the space of complex harmonic one-forms on $\mathscr{R}$.
   This polarization, induced by the complex structure on the Riemann surface, is traditionally represented by an element of the Siegel upper half space as follows.
   Choose simple closed curves $a_1,\ldots,a_\mathfrak{g},b_1,\ldots,b_{\mathfrak{g}}$ which are generators of the first homology group of $\mathscr{R}$.
   These can be chosen so that their intersection numbers $\gamma_1 \cdot \gamma_2$ satisfy $a_k \cdot b_j = - b_j \cdot a_k = \delta_{kj}$, and are zero otherwise.  Intuitively, the intersection numbers count the  number of crossings with sign, where the sign of the crossing depends on the relative direction. For a precise definition of intersection number, and proof of existence of such a basis see \cite{farkas_riemann_1992,siegel_topics_1988}.  
   
   It can be shown that there are holomorphic one-forms $\beta_1,\ldots,\beta_{\mathfrak{g}}$ such that 
   \[  \int_{a_k} \beta_j = \delta_{jk}.  \]
   Then, 
   \[ Z_{kj} = \int_{b_k} \beta_j  \]
   is a matrix representing an element of the Siegel upper half space.
   The symmetry of this matrix is referred to as the Riemann bilinear relations, and $Z_{kj}$ is called the period matrix. 
   
 To see the relation to the operator specified in Remark \ref{re:Siegel_upper_half}, let $\eta_j$ be the unique basis of harmonic one-forms on $\mathscr{R}$ dual to the curves $a_1,\ldots,a_{\mathfrak{g}},b_1,\ldots,b_{\mathfrak{g}}$. Explicitly
 \[  \int_{a_k} \eta_j = \delta_{jk} \ \ \text{ and } 
   \int_{b_k} \eta_{j+\mathfrak{g}} = \delta_{jk}. \]
   Let $L$ be the span of $\{\eta_1,\ldots,\eta_{\mathfrak{g}} \}$ and $L'$ be the span of $\{\eta_{\mathfrak{g}+1},\ldots,\eta_{2\mathfrak{g}}\}$. Then it can be shown that
   \begin{equation}  \label{eq:holomorphic_oneforms_explicit}
    \beta_j = \eta_j + \sum_{k=1}^n Z_{jk} \eta_{k+\mathfrak{g}}, \ \ \  j = 1,\ldots,\mathfrak{g}.
   \end{equation}
   and 
   \[ \mathcal{A}(\mathscr{R}) = \mathrm{span} \{ \beta_1,\ldots,\beta_{\mathfrak{g}} \}. \]
   Thus if $Z_{jk}$ are the components of an operator $Z:L \rightarrow L'$ with respect to the bases above, then $Z$ is in the Siegel upper half plane and $\mathcal{A}(\mathscr{R})$ is the graph of $Z$. 
   See \cite{farkas_riemann_1992,siegel_topics_1988} for details. 
   
   Observe that an important role is played by the choice of homology basis (or marking).
\end{example}
\begin{exercise}  \label{exer:holomorphic_one_forms} 
    The Hodge theorem says that every cohomology class in $H^{1}_{\text{dR}}(\mathscr{R})$ is represented by a unique harmonic one-form.   Use the Hodge theorem 
    to show that the one-forms \cref{eq:holomorphic_oneforms_explicit} are indeed holomorphic and span $\mathcal{A}(\mathscr{R})$. 
\end{exercise}

Next, we return to the symplectic action on  $\Pol^\omega(H)$ and express it in terms of $\mathfrak{D}(H)$. 
Fix an element $u \in \Sp(H)$, we obtain a new subspace $W_{u}^{+} \defeq u(L^{+}) \subset H_{\C}$.
It is easy to see from the definition of $\Sp(H)$ that $W_{u}^{+}$ is again a Lagrangian, which moreover satisfies $W_{u}^{+} \cap \alpha W_{u}^{+} = \{ 0 \}$.
 
    The equation $\omega(ux,uy) = \omega(x,y)$ implies that 
    \begin{equation} \label{eq:J_commute_symplectic}
     -Ju^{*}Ju = \1
    \end{equation}
    Writing
    \begin{equation} \label{eq:block_form_symplectomorphism}
        u = \begin{pmatrix}
            a & \alpha  b \alpha \\
            b & \alpha a \alpha
        \end{pmatrix}:\begin{array}{c} L^+ \\ \oplus \\ L^- \end{array} \rightarrow \begin{array}{c} L^+ \\ \oplus \\ L^- \end{array} 
    \end{equation}
    in block form we obtain
    \begin{equation} \label{eq:symplectomorphism_identities}
        \1 = - \begin{pmatrix}
            i & 0 \\
            0 & -i
        \end{pmatrix}
        \begin{pmatrix}
            a^{*} & b^{*} \\
            \alpha b^{*} \alpha & \alpha a^{*} \alpha
        \end{pmatrix}
        \begin{pmatrix}
            i & 0 \\
            0 & -i
        \end{pmatrix}
        \begin{pmatrix}
            a & \alpha b \alpha \\
            b & \alpha a \alpha
        \end{pmatrix} = \begin{pmatrix}
            a^{*}a - b^{*}b & a^{*}\alpha b \alpha - b^{*} \alpha a \alpha \\
            - \alpha b^{*} \alpha a + \alpha a^{*} \alpha b & - \alpha b^{*} b \alpha + \alpha a^{*}a \alpha
        \end{pmatrix}.
    \end{equation} 

By a symplectomorphism, we mean a bounded bijection which preserves the symplectic form. Recall that the inverse is also bounded.
\begin{lemma} \label{le:top_left_invertible_symplecto}
    Let $u$ be a symplectomorphism in block form (\ref{eq:block_form_symplectomorphism}). Then $a$ is invertible and 
    \begin{equation} \label{eq:symp_inverse_formula}
      u^{-1} = \left( \begin{array}{cc} a^* & -b^* \\ -\alpha b^* \alpha & \alpha a^* \alpha \end{array} \right).  
    \end{equation}
\end{lemma}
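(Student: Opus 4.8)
The plan is to exploit the two identities already derived for a symplectomorphism $u$: the compatibility relation \eqref{eq:J_commute_symplectic}, namely $-Ju^{*}Ju = \1$, and its block-form consequence \eqref{eq:symplectomorphism_identities}. From the $(1,1)$ entry of \eqref{eq:symplectomorphism_identities} we read off $a^{*}a - b^{*}b = \1$, hence $a^{*}a = \1 + b^{*}b \geqslant \1$, which shows $a$ is bounded below, hence injective with closed range. Applying the same argument to $u^{-1}$ (which is also a symplectomorphism, with bounded inverse) gives that the top-left block of $u^{-1}$ is bounded below; but once we guess the formula \eqref{eq:symp_inverse_formula}, its top-left block is $a^{*}$, so $a^{*}$ is also bounded below, hence $a$ has dense range. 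Together these give that $a$ is invertible. To make this non-circular, I would instead argue surjectivity of $a$ directly: from \eqref{eq:J_commute_symplectic} we get $u^{*} = -Ju^{-1}J$, so $u^{*}$ is invertible and its block decomposition is computable; comparing the top-left entry of $u^{*}$ (which is $a^{*}$) with invertibility of $u^{*}$ and the fact that the lower-left block of $u^{*}$ composed appropriately is controlled, one deduces $a^{*}$ bounded below as well. Cleanest: rewrite $-Ju^{*}Ju=\1$ as $u^{*} = -Ju^{-1}J$, take the top-left block of both sides, and use that $u^{-1}$ is bounded to get that $a^{*}$ (top-left of $u^*$) equals the top-left block of $-Ju^{-1}J$, which lets us solve for the blocks of $u^{-1}$.

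Concretely, the main computation is this: starting from $-Ju^{*}Ju = \1$, multiply on the right by $u^{-1}$ to get $u^{-1} = -Ju^{*}J$. Here $J$ acts as $i$ on $L^{+}$ and $-i$ on $L^{-}$, so conjugation by $J$ multiplies the off-diagonal blocks by $-1$ and fixes the diagonal blocks. Using the block form \eqref{eq:block_form_symplectomorphism} of $u$, one has
\begin{equation*}
    u^{*} = \begin{pmatrix} a^{*} & b^{*} \\ \alpha b^{*} \alpha & \alpha a^{*} \alpha \end{pmatrix},
\end{equation*}
and then
\begin{equation*}
    u^{-1} = -J u^{*} J = \begin{pmatrix} a^{*} & -b^{*} \\ -\alpha b^{*} \alpha & \alpha a^{*} \alpha \end{pmatrix},
\end{equation*}
which is exactly \eqref{eq:symp_inverse_formula}. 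Since $u^{-1}$ is a bounded operator, each of its blocks is bounded; in particular its top-left block $a^{*}$ is bounded, which we already knew, but more importantly $u u^{-1} = \1$ now reads, in the top-left corner, $a a^{*} + \alpha b \alpha (-\alpha b^{*}\alpha) = \1$, i.e. $a a^{*} - \alpha b b^{*} \alpha = \1$, so $a a^{*} \geqslant \1$. Combined with $a^{*}a \geqslant \1$ from \eqref{eq:symplectomorphism_identities}, the operator $a$ is bounded below and has dense range, hence is invertible with bounded inverse (the inverse being bounded because $a$ is bounded below).

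The order of steps: (1) from $-Ju^{*}Ju=\1$ derive $u^{-1} = -Ju^{*}J$; (2) compute $u^{*}$ in block form and conjugate by $J$ to obtain the claimed formula \eqref{eq:symp_inverse_formula}; (3) read off $a^{*}a \geqslant \1$ from \eqref{eq:symplectomorphism_identities} and $aa^{*} \geqslant \1$ from the identity $uu^{-1}=\1$, and conclude $a$ is invertible. The only mild subtlety — and the step I expect to need the most care — is the passage from ``$a$ bounded below with dense range'' to ``$a$ invertible with \emph{bounded} inverse'': bounded-below gives closed range, dense range then gives surjectivity, and bounded-below also gives that the algebraic inverse is bounded. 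This is standard Hilbert-space operator theory, but it is exactly the point where the infinite-dimensional setting requires a word beyond the finite-dimensional case, so I would state it explicitly rather than leave it to the reader. Everything else is bookkeeping with the block decomposition and the sign rules for conjugation by $J$.
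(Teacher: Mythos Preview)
Your argument is correct and essentially identical to the paper's. Both derive $u^{-1}=-Ju^{*}J$ from \eqref{eq:J_commute_symplectic} and then extract $a^{*}a\geqslant\1$ and $aa^{*}\geqslant\1$ to conclude $a$ is invertible; the only cosmetic difference is that the paper phrases the second inequality as ``apply the injectivity argument to the symplectomorphism $u^{-1}$'' (whose top-left block is $a^{*}$), whereas you read it off directly from the $(1,1)$ block of $uu^{-1}=\1$.
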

\begin{proof}
    The expression for $u^{-1}$ follows from \cref{eq:J_commute_symplectic,eq:block_form_symplectomorphism}. If $av = 0$, since by \cref{eq:symplectomorphism_identities} $a^* a - b^* b = \1$  we see that for any $v \in L^+$
    \[  \| v \|^2 = \| av \|^2 - \| bv \|^2 =-\|bv\|^2  \]
    so $v=0$. Thus $a$ is injective. Applying this to $u^{-1}$ we see that $a^*$ is injective, so (denoting closure by $\mathrm{cl}$) 
    \[  \mathrm{cl} \, \mathrm{range}(a) = \mathrm{ker}(a^*)^\perp = L^+, \]
    i.e. $a$ has dense range. Since $\| v\|^2 \leq \| a v\|^2$, $a$ is bounded below, so it is invertible. 
\end{proof}
\begin{exercise} \label{exer:sympletic_inverse}
 Verify that $u^{-1}$ is given by equation (\ref{eq:symp_inverse_formula}), completing the proof of the lemma. 
\end{exercise}

\begin{corollary} \label{cor:SpActsOnSiegelDisk}
If $u \in \Sp(H)$ is given in block form by (\ref{eq:block_form_symplectomorphism}) and $Z \in \mathcal{D}(H)$, then $a + \alpha b \alpha Z$ is invertible. 
Under the bijection of Proposition \ref{prop:GrassmannianIsSiegelDisk}, $\Sp(H)$ acts transitively on $\mathcal{D}(H)$ via 
\[  Z  \mapsto (  {b}  + \alpha {a}  \alpha Z)(a + \alpha b \alpha Z)^{-1}.  \]
\end{corollary}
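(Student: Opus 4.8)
The plan is to use the identification of $\mathfrak{D}(H)$ with $\Pol^\omega(H)$ from Proposition \ref{prop:GrassmannianIsSiegelDisk} to transport the already-established transitive action of $\Sp(H)$ on $\Pol^\omega(H)$ (Lemma \ref{lem:HomogeneousSets}, or rather \cref{exer:stabilizer}) to $\mathfrak{D}(H)$. Concretely: given $Z \in \mathfrak{D}(H)$, the associated polarization is $W_Z = \operatorname{graph}(Z) \subset L^+ \oplus L^-$, i.e.\ $W_Z = \{x + Zx : x \in L^+\}$. For $u \in \Sp(H)$ we know $uW_Z \in \Pol^\omega(H)$, so by Proposition \ref{prop:GrassmannianIsSiegelDisk} it is again the graph of a unique operator $Z' \in \mathfrak{D}(H)$, and the formula to be proved is precisely $Z' = (b + \alpha a \alpha Z)(a + \alpha b\alpha Z)^{-1}$. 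So the main tasks are (i) compute $uW_Z$ in block form, (ii) verify invertibility of $a + \alpha b \alpha Z$, and (iii) read off $Z'$; transitivity is then immediate from transitivity on $\Pol^\omega(H)$.

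For (i), I would take a general element $x + Zx \in W_Z$ with $x \in L^+$ and apply $u$ in the block form \eqref{eq:block_form_symplectomorphism}:
\[
u(x + Zx) = \begin{pmatrix} a & \alpha b \alpha \\ b & \alpha a \alpha \end{pmatrix}\begin{pmatrix} x \\ Zx \end{pmatrix} = \begin{pmatrix} (a + \alpha b \alpha Z)x \\ (b + \alpha a \alpha Z)x \end{pmatrix} \in L^+ \oplus L^-.
\]
Thus $uW_Z = \{(a + \alpha b\alpha Z)x \oplus (b + \alpha a\alpha Z)x : x \in L^+\}$. For this to be the graph of an operator $L^+ \to L^-$ we need the $L^+$-component map $T \defeq a + \alpha b\alpha Z$ to be a bijection of $L^+$; granting that, setting $y = Tx$ gives $uW_Z = \{y + (b + \alpha a\alpha Z)T^{-1}y : y \in L^+\} = \operatorname{graph}\big((b + \alpha a\alpha Z)T^{-1}\big)$, which is the claimed formula.

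The one genuine obstacle is (ii), the invertibility of $T = a + \alpha b\alpha Z$. One clean route: we know from Proposition \ref{prop:GrassmannianIsSiegelDisk} and the fact that $uW_Z \in \Pol^\omega(H)$ that $P^+|_{uW_Z} : uW_Z \to L^+$ is a bijection (Proposition \ref{pr:CompatInequality}); composing with the bijection $x \mapsto x + Zx$ from $L^+$ onto $W_Z$ and then $u: W_Z \to uW_Z$ (a bijection since $u \in \Sp(H) \subset \GL(H)$), the composite $L^+ \to L^+$ is exactly $x \mapsto P^+ u(x + Zx) = (a + \alpha b\alpha Z)x = Tx$. Hence $T$ is a bijection, so invertible (its inverse is bounded by the open mapping theorem, or because $u$ and all the structure maps involved are bounded). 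Alternatively, one can give a direct operator estimate: using $a^*a - b^*b = \1$ from \eqref{eq:symplectomorphism_identities} and $\1 - Z^*Z > 0$, show $T$ is bounded below and has dense range, mimicking the argument in Lemma \ref{le:top_left_invertible_symplecto}; I would include whichever is shorter, likely the former. Finally, that the resulting map $Z \mapsto (b + \alpha a\alpha Z)(a + \alpha b\alpha Z)^{-1}$ is indeed a group action (associativity, identity) and is transitive follows by transporting these facts along the bijection of Proposition \ref{prop:GrassmannianIsSiegelDisk} from the action of $\Sp(H)$ on $\Pol^\omega(H)$, whose transitivity is \cref{exer:stabilizer}; no separate computation with the fractional-linear formula is needed.
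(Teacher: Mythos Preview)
Your proposal is correct and follows essentially the same route as the paper: you factor $a+\alpha b\alpha Z$ as the composite $P^{+}\circ u\circ (x\mapsto x+Zx)$ of three bijections (invoking \cref{pr:CompatInequality} for the projection), then read off the graph of $uW_Z$ to obtain the fractional-linear formula, and deduce transitivity from the already-known transitivity on $\Pol^{\omega}(H)$. The only cosmetic difference is that the paper cites \cref{cor:GraphOperator} rather than \cref{pr:CompatInequality} for the invertibility of the projection, and \cref{lem:HomogeneousSets} rather than \cref{exer:stabilizer} for transitivity.
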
 
\begin{proof}
    Fix $Z \in \mathcal{D}(H)$ and let $W_+$ be the graph of $Z$, so that
    \begin{align*}
     \Psi: L^+ & \rightarrow W_+  \\
     x & \mapsto x + Z x 
    \end{align*} 
    is an isomorphism. Let $W_+'=u W_+$ and observe that $\left. u \right|_{W_+}: W_+ \rightarrow W_+'$ is an isomorphism. Since  $\left. P_+' \right|_{W_+'}$ is also an isomorphism by Corollary \ref{cor:GraphOperator}, we see that 
    \[  a + \alpha b \alpha Z = P_+' \, u \, \Psi: L^+ \rightarrow L^+  \]
    is invertible. 

    We have already seen in Lemma \ref{lem:HomogeneousSets} that $\Sp(H)$ acts transitively on $\Pol^\omega(H)$, so the action induced on $\mathcal{D}(H)$ is transitive. Since $W_+'$ is the image of the graph of $Z$ under $u$, it is the image of 
    \[  \left( \begin{array}{c} a + \alpha b \alpha Z \\ b + \alpha a \alpha Z \end{array} \right) = u \left( \begin{array}{c} \1 \\ Z \end{array} \right):L^+ \rightarrow \begin{array}{c} L^+ \\ \oplus \\ L^- \end{array}.     \]
    This agrees with the image of 
    \[  \left( \begin{array}{c} \1 \\ (b + \alpha a \alpha Z)( a + \alpha b \alpha Z)^{-1} \end{array} \right)   \]
    which proves the claim. 
\end{proof}

\begin{example} \label{ex:smooth_example_bosonicH}
 Let $\bose_{\mathbb{C}}$ be as in Example \ref{ex:bosonExample}.  Assume that $\phi \in \mathrm{Diff}(\mathbb{S}^1)$. Modelling $\bose_{\mathbb{C}}$ by the space of functions $\dot{H}^{1/2}(\mathbb{S}^1)$ and using the expression 
 \[ \omega(f,g) = \int_{\mathbb{S}^1} f \, dg, \]
 for smooth $f,g$, we see by change of variables that 
 \begin{align*}
   \mathcal{C}_\phi: \dot{H}^{1/2}(\mathbb{S}^1) &\rightarrow \dot{H}^{1/2}(\mathbb{S}^1) \\f & \mapsto f \circ \phi - \frac{1}{2\pi} \int_{\mathbb{S}^1} f d\theta  
\end{align*}
preserves $\omega$ for smooth $f,g$. It can be shown that $\mathcal{C}_\phi$ is bounded for any diffeomorphism $\phi$. So the invariance of $\omega$ under $\mathcal{C}_\phi$ extends to all of $\bose$ by continuity of $\omega$.  Furthermore, $\phi^{-1} \in \mathrm{Diff}(\mathbb{S}^1)$ is a bounded inverse of $\mathcal{C}_\phi$ so we see that $\mathcal{C}_\phi$  
 is a symplectomorphism.
 
 Letting $L^\pm$ be as in Example \ref{ex:bosonExample}, we then have that 
 \[  W^\pm_\phi = \mathcal{C}_\phi L^+  \]
 defines an element of $\Pol^{\omega}(\bose)$. 
 The stabilizer of $L^+$ is precisely $\text{M\"ob}(\mathbb{S}^1)$, the set of M\"obius transformations preserving the circle. In summary, we obtain a well-defined injection 
 \begin{align*}
     \text{Diff}(\mathbb{S}^1)/\text{M\"ob}(\mathbb{S}^1) & \rightarrow \Pol^\omega(\bose) \\
     [\phi] & \mapsto W_\phi 
 \end{align*}
 where $[\phi]$ denotes the equivalence class of a representative $\phi \in \text{Diff}(\mathbb{S}^1)$.  
 
 It can be shown that the operator $Z \in \mathfrak{D}(\bose)$ associated to $W_\phi$ (see Corollary \ref{cor:GraphOperator}) is the Grunsky operator, which was introduced to complex function theory eighty years ago by H. Grunsky.
\end{example}
\begin{remark} \label{re:universal_Teich_space}
     Examples \ref{ex:smooth_example_bosonicH}, \ref{ex:real_boson_example}, and \ref{ex:bosonExample} originate with G. Segal \cite{segal_unitary_1981}. In the same paper, he introduced the concept of the infinite Siegel disk. 
    It was shown by Nag-Sullivan \cite{nag_teichmuller_1995} and Vodopy'anov \cite{vodopyanov_mappings_1989} that a homeomorphism of $\mathbb{S}^1$ is a symplectomorphism if and only if it is a quasisymmetry. Furthermore, the unitary subgroup  is the set of M\"obius transformations preserving the circle. 
    (Equivalently, by Exercise \ref{exer:stabilizer}, the unitary subgroup is the stabilizer of the polarization in Example \ref{ex:bosonExample}). 
    
    The definition of quasisymmetries is beyond the scope of this paper. We mention only that quasisymmetries modulo M\"obius transformations $\text{QS}(\mathbb{S}^1)/\text{M\"ob}(\mathbb{S}^1)$ is a model of the universal Teichm\"uller space. 
    Takhtajan and Teo showed that this gives a holomorphic embedding of the universal Teichm\"uller space $\text{QS}(\mathbb{S}^1)/\text{M\"ob}(\mathbb{S}^1)$ into the infinite Siegel disk. 
    The fact that the operator $Z$ is the Grunsky matrix was shown by Kirillov and Yuri'ev \cite{kirillov_representations_1988} in the smooth setting, and by Takhtajan and Teo \cite{takhtajan_weil-petersson_2006} in the quasisymmetric case. 
\end{remark}

 Next, we define the restricted Grassmannian and symplectic group. 
We first define a relation $\sim$ on $\Pol^{\omega}(H)$ by saying that $W_{1}^{+} \sim W_{2}^{+}$ if the restriction to $W_{1}^{+} \subset H_{\C}$ of the projection operator $H_{\C} = W_{2}^{+} \oplus \alpha (W_{2}^{+}) \rightarrow \alpha (W_{2}^{+})$ is Hilbert-Schmidt.
One might object that this definition is too vague, since it does not specify with respect to which inner product this operator is supposed to be Hilbert-Schmidt (reasonable options being $g$, $g_{W_{1}}$, and $g_{W_{2}}$).
The following result tells us that it does not matter.
\begin{lemma}
    If $H$ is a Hilbert space, equipped with two strong inner products, $g_{1}$ and $g_{2}$, then an operator $T:H \rightarrow H$ is Hilbert-Schmidt with respect to $g_{1}$ if and only if it is Hilbert-Schmidt with respect to $g_{2}$.
\end{lemma}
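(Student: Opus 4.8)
The plan is to relate the two inner products by a single bounded, positive, invertible operator and then to invoke the standard behaviour of the Hilbert--Schmidt norm under composition with bounded operators.

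First I would introduce the operator $A = \ph_{g_1}^{-1}\ph_{g_2} \colon H \to H$, characterised by $g_2(v,w) = g_1(Av,w)$ for all $v,w \in H$. Symmetry of $g_2$ makes $A$ self-adjoint with respect to $g_1$, and positivity of $g_2$ makes $A$ positive with respect to $g_1$; moreover \cref{pr:StrongEquivalence}, which tells us $g_1$ and $g_2$ are equivalent, makes $A$ bounded with bounded inverse $A^{-1} = \ph_{g_2}^{-1}\ph_{g_1}$. Applying the spectral theorem in the Hilbert space $(H,g_1)$, the square root $A^{1/2}$ exists and is bounded, positive, $g_1$-self-adjoint, and invertible with bounded inverse $A^{-1/2}$.

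Next I would observe that if $\{e_n\}$ is an orthonormal basis of $(H,g_1)$, then $\{A^{-1/2}e_n\}$ is an orthonormal basis of $(H,g_2)$: orthonormality is the identity $g_2(A^{-1/2}e_n, A^{-1/2}e_m) = g_1(A^{1/2}e_n, A^{-1/2}e_m) = g_1(e_n,e_m)$, and completeness holds because $A^{-1/2}$ is a linear homeomorphism of $H$ and the two norm topologies coincide. Since the Hilbert--Schmidt norm does not depend on the chosen orthonormal basis, writing $\|\cdot\|_{\mathrm{HS},i}$ for the Hilbert--Schmidt norm relative to $g_i$ and $\|\cdot\|$ for the $g_1$-operator norm, I obtain
\[
\|T\|_{\mathrm{HS},2}^{2} = \sum_n \|T A^{-1/2} e_n\|_{g_2}^{2} \leq \|A\| \sum_n \|T A^{-1/2} e_n\|_{g_1}^{2} = \|A\|\,\|T A^{-1/2}\|_{\mathrm{HS},1}^{2} \leq \|A\|\,\|A^{-1/2}\|^{2}\,\|T\|_{\mathrm{HS},1}^{2},
\]
using $\|v\|_{g_2}^{2} = g_1(Av,v) \leq \|A\|\,\|v\|_{g_1}^{2}$ for the middle inequality and the standard estimate $\|SB\|_{\mathrm{HS},1} \leq \|B\|\,\|S\|_{\mathrm{HS},1}$ (with $S=T$, $B=A^{-1/2}$) for the last. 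Hence if $T$ is Hilbert--Schmidt with respect to $g_1$ it is Hilbert--Schmidt with respect to $g_2$; exchanging the roles of $g_1$ and $g_2$ gives the converse, proving the lemma.

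I do not expect a serious obstacle here: the only points needing attention are that $A$ — and hence $A^{1/2}$ — is invertible with bounded inverse, which is precisely where \cref{pr:StrongEquivalence} enters, and the bookkeeping that all adjoints, operator norms, and the functional calculus are taken relative to $g_1$, so that $A^{1/2}$ is genuinely self-adjoint in the sense being used. One could sidestep $A^{1/2}$ by applying Gram--Schmidt to $\{e_n\}$ with respect to $g_2$, but the resulting change of basis is only triangular rather than a clean conjugation by a bounded invertible operator, so the square-root route is tidier.
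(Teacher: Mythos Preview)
Your argument is correct. Both your proof and the paper's hinge on the same idea: produce a bounded invertible operator that carries a $g_1$-orthonormal basis to a $g_2$-orthonormal basis, and then use that Hilbert--Schmidt operators form a two-sided ideal in the bounded operators. The difference is only in how that operator is built. You construct it canonically as $A^{-1/2}$ via the spectral theorem applied to $A=\ph_{g_1}^{-1}\ph_{g_2}$; the paper simply chooses orthonormal bases $\{e_i\}$ for $g_1$ and $\{f_i\}$ for $g_2$ and lets the map be $e_i\mapsto f_i$. The paper's route is a little lighter (no functional calculus needed) and yields an exact identity $\|A^{-1}TA\|_{1}=\|T\|_{2}$ rather than your chain of inequalities; on the other hand, your operator is canonical and makes the self-adjointness and positivity manifest. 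Either way, the substance is the same.
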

\begin{proof}
    Choose bases $\{ e_{i} \}$ and $\{ f_{i} \}$ which are orthonormal for $g_{1}$ and $g_{2}$ respectively.
    Let $A: H \rightarrow H$ be the complex-linear extension of $e_{i} \mapsto f_{i}$.
    The map $A$ is bounded linear, with bounded linear inverse by \cref{pr:StrongEquivalence}.
    Moreover, $A$ satisfies $g_{1}(v,w) = g_{2}(Av,Aw)$ for all $v,w \in H$.

    Denote by $\| - \|_{i}$ the Hilbert-Schmidt norm w.r.t.~$g_{i}$.
    Now, if $T$ is Hilbert-Schmidt w.r.t~$g_{1}$, then so is $A^{-1}TA$, and we compute
    \begin{align*}
        \| ATA^{-1} \|_{1}^{2} &= \sum_{i} g_{1}( A^{-1}TA e_{i}, A^{-1}TA e_{i}) \\
        &= \sum_{i} g_{2}(T f_{i}, T f_{i}) \\
        &= \|T\|_{2}^{2}. \qedhere
    \end{align*}
\end{proof}

\begin{lemma}\label{lem:BlockHilbertSchmidt}
    Let $u \in \Sp(H)$.
    Then $uL^{+} \sim L^{+}$ if and only if $b$ is Hilbert-Schmidt, where $b$ is given by the decomposition \cref{eq:block_form_symplectomorphism}.
\end{lemma}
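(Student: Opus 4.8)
The plan is to reduce both sides of the biconditional to the Hilbert--Schmidt-ness of one and the same operator, by exhibiting $b$ as the composition of a Banach-space isomorphism with the projection that appears in the definition of $\sim$. First I would unwind the block form \cref{eq:block_form_symplectomorphism}: for $x \in L^{+}$ one reads off $bx = P^{-}ux$, i.e.\ $b = P^{-}\circ u|_{L^{+}}$. Factoring $u|_{L^{+}}$ through its image, this becomes $b = \bigl(P^{-}|_{uL^{+}}\bigr)\circ\bigl(u|_{L^{+}}\bigr)$, where now $u|_{L^{+}}$ is viewed as a map $L^{+}\to uL^{+}$. Since $u$ and $u^{-1}$ are bounded (\cref{le:top_left_invertible_symplecto}), this map is a bounded bijection with bounded inverse $u^{-1}|_{uL^{+}}$; and $L^{+}$, $L^{-}$, $uL^{+}$ are all closed subspaces of $H_{\C}$, hence Hilbert spaces for the (positive-definite) sesquilinear extension of $g$. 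So $u|_{L^{+}}\colon L^{+}\to uL^{+}$ is an isomorphism of Hilbert spaces.

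Next I would invoke the standard fact that the Hilbert--Schmidt ideal is stable under composition with bounded operators on either side. From $b = \bigl(P^{-}|_{uL^{+}}\bigr)\circ\bigl(u|_{L^{+}}\bigr)$ and, conversely, $P^{-}|_{uL^{+}} = b\circ\bigl(u^{-1}|_{uL^{+}}\bigr)$, it then follows at once that $b\colon L^{+}\to L^{-}$ is Hilbert--Schmidt if and only if $P^{-}|_{uL^{+}}\colon uL^{+}\to L^{-}$ is Hilbert--Schmidt. Finally, because the decomposition $H_{\C}=L^{+}\oplus L^{-}$ is orthogonal (\cref{le:standard_polarization_all_properties}), the projection $H_{\C}=L^{+}\oplus\alpha(L^{+})\to\alpha(L^{+})$ in the definition of $\sim$ is exactly $P^{-}$; thus $P^{-}|_{uL^{+}}$ being Hilbert--Schmidt is, by definition, precisely $uL^{+}\sim L^{+}$. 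The preceding lemma on independence of the Hilbert--Schmidt property from the choice of strong inner product makes it harmless that we computed these norms using $g$ throughout.

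I do not expect a genuine obstacle here: the content is entirely the identity $b = P^{-}\circ u|_{L^{+}}$ together with ideal properties of Hilbert--Schmidt operators. The only place requiring a little care is the bookkeeping of which inner products sit on $L^{+}$, $L^{-}$, and $uL^{+}$ when one speaks of Hilbert--Schmidt norms, and the verification that $u|_{L^{+}}$ is a Hilbert-space isomorphism onto $uL^{+}$ rather than merely a bounded injection; both are dispatched by boundedness of $u^{\pm 1}$ and the independence lemma just cited.
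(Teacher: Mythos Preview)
Your proof is correct and follows essentially the same line as the paper's: both identify the relevant projection $uL^{+}\to L^{-}$ with $b$ up to composition with an isomorphism, and then use the ideal property of Hilbert--Schmidt operators. The only cosmetic difference is that the paper invokes the invertibility of the block $a$ (from \cref{le:top_left_invertible_symplecto}) to parametrize $uL^{+}$, whereas you use the invertibility of $u$ itself, which is slightly more direct.
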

\begin{proof}
    The projection $uL^{+} \rightarrow \alpha(L^{+})$ is given by $(ax,bx) \mapsto bx$.
    Because the operator $a$ is invertible (by \cref{le:top_left_invertible_symplecto}), we have that this operator is Hilbert-Schmidt if and only if $b$ is Hilbert-Schmidt.
\end{proof}

\begin{proposition}\label{prop:SymplecticEquivalence}
    The relation $\sim$ on $\Pol^{\omega}(H)$ is an equivalence relation.
\end{proposition}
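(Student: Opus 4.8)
The plan is to verify reflexivity, symmetry, and transitivity directly, leveraging the homogeneous-space description from \cref{lem:HomogeneousSets} together with the block-form computations. First, reflexivity: for $W^{+} \in \Pol^{\omega}(H)$, the relevant operator is the restriction to $W^{+}$ of the projection $H_{\C} = W^{+} \oplus \alpha(W^{+}) \to \alpha(W^{+})$, which is simply $0$, hence trivially Hilbert-Schmidt. So $W^{+} \sim W^{+}$.

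For symmetry and transitivity, the key move is to translate the condition $W_{1}^{+} \sim W_{2}^{+}$ into a statement about a symplectomorphism relating the two polarizations. By \cref{lem:HomogeneousSets}, $\Sp(H)$ acts transitively on $\Pol^{\omega}(H)$, so we may pick $u_{1}, u_{2} \in \Sp(H)$ with $u_{j}(L^{+}) = W_{j}^{+}$. Then $u = u_{2}^{-1}u_{1} \in \Sp(H)$ satisfies $u(L^{+}) = u_{2}^{-1}(W_{1}^{+})$, and one checks that the projection $W_{1}^{+} \to \alpha(W_{2}^{+})$ (along $W_{2}^{+}$) is Hilbert-Schmidt if and only if the projection $u_{2}^{-1}(W_{1}^{+}) = u(L^{+}) \to \alpha(L^{+})$ (along $L^{+}$) is Hilbert-Schmidt, because $u_{2}^{-1}$ is a bounded linear isomorphism with bounded inverse intertwining the two decompositions, and composing a Hilbert-Schmidt operator with bounded operators on either side preserves the Hilbert-Schmidt property (and by the lemma preceding \cref{lem:BlockHilbertSchmidt}, the choice of inner product is immaterial). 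Thus $W_{1}^{+} \sim W_{2}^{+}$ if and only if $u(L^{+}) \sim L^{+}$, which by \cref{lem:BlockHilbertSchmidt} holds if and only if the off-diagonal block $b$ of $u$ is Hilbert-Schmidt.

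Now symmetry follows because if $u = u_{2}^{-1}u_{1}$ has Hilbert-Schmidt off-diagonal block, then so does $u^{-1} = u_{1}^{-1}u_{2}$: by the formula $u^{-1} = \left(\begin{smallmatrix} a^{*} & -b^{*} \\ -\alpha b^{*}\alpha & \alpha a^{*}\alpha \end{smallmatrix}\right)$ from \cref{le:top_left_invertible_symplecto}, the off-diagonal block of $u^{-1}$ is $-\alpha b^{*}\alpha$, which is Hilbert-Schmidt precisely when $b$ is (since $\alpha$ is an isometric, hence bounded, conjugate-linear involution, and adjoints preserve the Hilbert-Schmidt class). For transitivity, suppose $W_{1}^{+} \sim W_{2}^{+}$ and $W_{2}^{+} \sim W_{3}^{+}$; choose $u_{j}$ with $u_{j}(L^{+}) = W_{j}^{+}$, write $v = u_{2}^{-1}u_{1}$ and $w = u_{3}^{-1}u_{2}$, so that $w v = u_{3}^{-1}u_{1}$ has $u_{3}^{-1}u_{1}(L^{+}) = W_{3}^{+}$-relative projection governed by the off-diagonal block of $wv$. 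Multiplying the block forms of $w$ and $v$, the $(2,1)$ block of $wv$ is $b_{w} a_{v} + \alpha a_{w}\alpha\, b_{v}$, a sum of a bounded operator times a Hilbert-Schmidt operator and a Hilbert-Schmidt operator times a bounded operator, hence Hilbert-Schmidt. Therefore $W_{1}^{+} \sim W_{3}^{+}$.

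The main obstacle is the bookkeeping in the reduction step: one must check carefully that pre- and post-composing with the isomorphisms $u_{j}^{\pm 1}$ genuinely converts ``Hilbert-Schmidt for the projection $W_{1}^{+}\to\alpha(W_{2}^{+})$'' into ``Hilbert-Schmidt for the projection $u(L^{+})\to\alpha(L^{+})$'', keeping track of which direct-sum decomposition each projection is taken along and invoking the inner-product-independence lemma at the right moment. Once that identification is in place, symmetry and transitivity are immediate from the block-matrix formulas and the ideal property of the Hilbert-Schmidt operators. An alternative, essentially equivalent, route is to phrase everything in terms of the operator $Z \in \mathfrak{D}(H)$ via \cref{prop:GrassmannianIsSiegelDisk} and the action formula in \cref{cor:SpActsOnSiegelDisk}, but the symplectomorphism approach keeps the computations cleanest.
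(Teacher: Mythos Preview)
Your proposal is correct and follows the same underlying strategy as the paper: use transitivity of the $\Sp(H)$-action to reduce to the standard polarization $L^{+}$, then invoke \cref{lem:BlockHilbertSchmidt} and the ideal property of Hilbert-Schmidt operators. The organization differs slightly, however, and is worth a brief comparison.

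For symmetry, the paper assumes without loss of generality that $W_{2}^{+}=L^{+}$, writes $W_{1}^{+}=u(L^{+})$, and then explicitly computes the projection $q=uP^{-}u^{-1}$ in block form to read off that its restriction to $L^{+}$ is Hilbert-Schmidt when $b$ is. You instead package the reduction as the equivalence ``$W_{1}^{+}\sim W_{2}^{+}$ if and only if the off-diagonal block of $u_{2}^{-1}u_{1}$ is Hilbert-Schmidt'' once and for all, and then read symmetry off from the inverse formula in \cref{le:top_left_invertible_symplecto}. Your version is a bit more streamlined, since the same reduction lemma serves both symmetry and transitivity; the paper's explicit matrix for $uP^{-}u^{-1}$ is really doing the same computation in different clothing.

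For transitivity, the paper works directly with the polarizations, factoring $P_{3}^{-}\iota_{1}^{+}=P_{3}^{-}(\iota_{2}^{+}P_{2}^{+}+\iota_{2}^{-}P_{2}^{-})\iota_{1}^{+}$ and observing that each summand contains a Hilbert-Schmidt factor. You instead multiply block matrices of symplectomorphisms and read off the $(2,1)$-entry of $wv$. Both arguments amount to the same bilinear estimate; the paper's route avoids your reduction lemma at this step, while yours keeps everything inside a single framework. The bookkeeping you flag as the main obstacle (that conjugating by $u_{2}$ genuinely exchanges the two Hilbert-Schmidt conditions) is exactly what the paper's ``without loss of generality $W_{2}^{+}=L^{+}$'' is asserting, and your justification via the two-sided ideal property together with the inner-product-independence lemma is correct.
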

\begin{proof}
    It is clear that $\sim$ is reflexive, because the projection operator $W^{+} \rightarrow \alpha(W^{+})$ is identically zero.
    
    Now, suppose that $W_{1}^{+} \sim W_{2}^{+}$.
    We may, without loss of generality, assume that $W_{2}^{+} = L^{+}$.
    There then exists an element $u \in \Sp(H)$ such that $uL^{+} = W_{2}^{+}$.
    
    Let $q:H_{\C} \rightarrow u(L^{-})$ be the projection with respect to the splitting $H_{\C} = u(L^{+}) \oplus u(L^{-})$.
    We determine the decomposition of $q$ with respect to the splitting $H_{\C} = L^{+} \oplus L^{-}$.
    Let $x \in H_{\C}$.
    Then, there exist $v^{\pm} \in u(L^{\pm})$ such that $x = v^{+} + v^{-}$; and we have $q(x) = v^{-}$.
    We apply $u^{-1}$ to obtain $u^{-1}x = u^{-1}v^{+} + u^{-1}v^{-}$.
    We observe that $u^{-1}v^{\pm} \in L^{\pm}$, thus projecting onto $L^{-}$, we obtain $P^{-}u^{-1}x = u^{-1}v^{-}$.
    Finally, we apply $u$ to obtain $uP^{-}u^{-1}(x) = v^{-} = q(x)$.
    We now compute
    \begin{equation*}
        uP^{-}u^{-1} = \begin{pmatrix}
            a & \alpha b \alpha \\
            b & \alpha a \alpha
        \end{pmatrix} \begin{pmatrix}
            0 & 0 \\
            0 & 1
        \end{pmatrix} \begin{pmatrix}
            a^{*} & -b^{*} \\
            - \alpha b^{*} \alpha & \alpha a^{*} \alpha 
        \end{pmatrix} = \begin{pmatrix}
            0 & \alpha b \alpha \\
            0 & \alpha a \alpha 
        \end{pmatrix} \begin{pmatrix}
            a^{*} & -b^{*} \\
            - \alpha b^{*} \alpha & \alpha a^{*} \alpha 
        \end{pmatrix} =
        \begin{pmatrix}
            - \alpha bb^{*} \alpha & \alpha b a^{*} \alpha \\
            - \alpha a b^{*} \alpha & \alpha aa^{*} \alpha
        \end{pmatrix}
    \end{equation*}
    The restriction of $q$ to $L^{+}$ is simply given by the first column of this matrix.
    We thus see that if $b$ is Hilbert-Schmidt, then $q$ is Hilbert-Schmidt.
    The assumption is that $uL^{+} \sim L^{+}$, which tells us that $b$ is Hilbert-Schmidt, through \cref{lem:BlockHilbertSchmidt}.
    It follows that $q$ is Hilbert-Schmidt, thus $L^{+} \sim u L^{+}$.
    
    Finally, suppose now that we have $W_{1}^{+} \sim W_{2}^{+}$ and $W_{2}^{+} \sim W_{3}^{+}$.
    Let $\iota_{1}^{+}:W_{1}^{+} \rightarrow H_{\C}$ and $\iota_{2}^{+}: W_{2}^{+} \rightarrow H_{\C}$ be the inclusions, and let $P_{i}^{\pm}: H_{\C} \rightarrow W_{i}^{\pm}$ be the projection with respect to the decompositions $H_{\C} = W_{i}^{+} \oplus W_{i}^{-}$ for $i=1,2,3$.
    We then have that the operators $P_{2}^{-}\iota_{1}^{+}$ and $P_{3}^{-}\iota_{2}^{+}$ are Hilbert-Schmidt.
    We have
    \begin{equation*}
        P_{3}^{-}\iota_{1}^{+} = P_{3}^{-}(\iota_{2}^{+}P_{2}^{+} + \iota_{2}^{-}P_{2}^{-})\iota_{1}^{+} = P_{3}^{-} \iota_{2}^{+} P_{2}^{+} \iota_{1}^{+} + P_{3}^{-} \iota_{2}^{-} P_{2}^{-} \iota_{1}^{+}.
    \end{equation*}
    It follows that $P_{3}^{-} \iota_{1}^{+}$ is Hilbert-Schmidt, and we are done.\textbf{}
\end{proof}

We introduce the \emph{restricted symplectic Lagrangian Grassmannian}
\begin{align*}
    \Pol_{2}^{\omega}(H) &\defeq \{ W^{+} \in \Pol^{\omega}(H) \mid W^{+} \sim L^{+} \} = \{ W^{+} \in \Pol^{\omega}(H) \mid L^{+} \rightarrow \alpha(W^{+}) \text{ is Hilbert-Schmidt} \},
\end{align*}
where $L^{+} \rightarrow \alpha(W^{+})$ is the restriction of the projection $H_{\C} = W^{+} \oplus \alpha(W^{+}) \rightarrow \alpha(W^{+})$ to $L^{+} \subset H_{\C}$.
Similarly, we may consider the \emph{restricted Siegel disk}
\begin{equation*}
    \mathfrak{D}_{2}(H) \defeq \mathfrak{D}(H) \cap \mc{B}_{2}(L^{+}, L^{-}),
\end{equation*}
where $\mc{B}_{2}(L^{+},L^{-})$ is the space of Hilbert-Schmidt operators from $L^{+}$ to $L^{-}$.

\begin{lemma} \label{le:LG_Siegel_disk_isomorphism_res}
    The isomorphism $\Pol^{\omega}(H) \rightarrow \mathfrak{D}(H)$ restricts to an isomorphism $\Pol_{2}^{\omega}(H) \rightarrow \mathfrak{D}_{2}(H)$.
\end{lemma}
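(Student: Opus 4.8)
The plan is to reduce everything to the block-form computations already recorded for the symplectic action. Recall from \cref{prop:GrassmannianIsSiegelDisk} that the bijection $\Pol^{\omega}(H) \to \mathfrak{D}(H)$ sends $W$ to $Z_{W} = P^{-}(P^{+}|_{W})^{-1}$, with inverse $Z \mapsto \operatorname{graph}(Z)$. Since this is already a bijection between $\Pol^{\omega}(H)$ and $\mathfrak{D}(H)$, it is enough to prove that for $W \in \Pol^{\omega}(H)$ one has $W \in \Pol_{2}^{\omega}(H)$ if and only if $Z_{W}$ is Hilbert-Schmidt, i.e.~$Z_{W} \in \mathfrak{D}_{2}(H) = \mathfrak{D}(H) \cap \mc{B}_{2}(L^{+},L^{-})$.

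First I would fix $W \in \Pol^{\omega}(H)$ and, using transitivity of the $\Sp(H)$-action (\cref{lem:HomogeneousSets}), write $W = uL^{+}$ for some symplectomorphism $u$ in block form \eqref{eq:block_form_symplectomorphism}, with blocks $a$ (invertible, by \cref{le:top_left_invertible_symplecto}) and $b$. Two observations then combine. On the one hand, \cref{cor:SpActsOnSiegelDisk} applied at the base point $L^{+} = \operatorname{graph}(0)$ gives $Z_{W} = Z_{uL^{+}} = (b + \alpha a \alpha \cdot 0)(a + \alpha b \alpha \cdot 0)^{-1} = b a^{-1}$, so that the Siegel-disk operator attached to $W$ is precisely the off-diagonal block $b$ post-composed with the inverse of the invertible operator $a$. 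On the other hand, \cref{lem:BlockHilbertSchmidt} states that $uL^{+} \sim L^{+}$ --- which is exactly the condition $W \in \Pol_{2}^{\omega}(H)$ --- if and only if $b$ is Hilbert-Schmidt. Since the Hilbert-Schmidt operators form a two-sided ideal and $a$ is invertible, $b a^{-1}$ is Hilbert-Schmidt if and only if $b$ is; hence $Z_{W} \in \mc{B}_{2}(L^{+},L^{-})$ if and only if $W \in \Pol_{2}^{\omega}(H)$. (One also uses the lemma preceding \cref{lem:BlockHilbertSchmidt}, which guarantees that ``Hilbert-Schmidt'' is unambiguous, independent of the choice of strong inner product on $H_{\C}$ and hence on $L^{\pm}$.) Together with \cref{prop:GrassmannianIsSiegelDisk}, this shows that the bijection restricts to a bijection $\Pol_{2}^{\omega}(H) \to \mathfrak{D}_{2}(H)$.

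There is no real obstacle here: the substantive content is already isolated in \cref{cor:SpActsOnSiegelDisk} and \cref{lem:BlockHilbertSchmidt}, and the only further input is the ideal property of $\mc{B}_{2}$. The points deserving a line of care are organizational: checking that evaluating the formula of \cref{cor:SpActsOnSiegelDisk} at the base point really produces $Z_{uL^{+}} = b a^{-1}$, and invoking invariance of the Hilbert-Schmidt class under change of strong inner product. If one prefers a group-free argument, the same conclusion follows by computing the projection $q \colon H_{\C} \to \alpha(W)$ along $W$ directly on $L^{+}$: writing $W = \operatorname{graph}(Z)$ and $\alpha(W) = \operatorname{graph}(Z^{*})$ (using $\alpha Z \alpha = Z^{*}$ from \cref{lem:GraphLagrangianCondition}), and noting that $\1 - Z^{*}Z$ is invertible because $H_{\C} = W \oplus \alpha(W)$ is a topological direct sum, one finds that for $x \in L^{+}$, $q(x) = \bigl(-Z^{*}Z(\1 - Z^{*}Z)^{-1}x,\, -Z(\1 - Z^{*}Z)^{-1}x\bigr)$; the ideal property again shows $q|_{L^{+}}$ is Hilbert-Schmidt precisely when $Z$ is.
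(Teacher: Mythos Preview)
Your proof is correct, but the paper argues more directly, without invoking the symplectic group at all. For $W \in \Pol_{2}^{\omega}(H)$ the defining condition $W \sim L^{+}$ says precisely that $P^{-}|_{W}$ is Hilbert--Schmidt, and since $Z = P^{-}|_{W}\,(P^{+}|_{W})^{-1}$ with $(P^{+}|_{W})^{-1}$ bounded, $Z$ is Hilbert--Schmidt. Conversely, the paper observes that the projection $p:\operatorname{graph}(Z)\to L^{-}$ satisfies $Z = p\,\hat{Z}$ where $\hat{Z}:x\mapsto(x,Zx)$ is invertible, so $p = Z\hat{Z}^{-1}$ is Hilbert--Schmidt when $Z$ is; this gives $\operatorname{graph}(Z)\sim L^{+}$, and one then appeals to the symmetry of $\sim$ from \cref{prop:SymplecticEquivalence}.

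Your route---write $W=uL^{+}$, compute $Z_{W}=ba^{-1}$ via \cref{cor:SpActsOnSiegelDisk}, and invoke \cref{lem:BlockHilbertSchmidt}---is sound and uses only results established earlier, so there is no circularity. It is essentially the computation the paper performs \emph{afterwards} in the proof of \cref{pr:symp_res_characterization}, and it has the merit of making the link between the block decomposition and the Hilbert--Schmidt class explicit. The paper's version is shorter and more intrinsic: it stays entirely at the level of the graph description and never needs to choose a $u$. Your alternative group-free computation of $q|_{L^{+}}$ via $(\1-Z^{*}Z)^{-1}$ is also correct, but heavier than the paper's factorization $Z=p\hat{Z}$, which sidesteps inverting $\1-Z^{*}Z$ altogether.
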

\begin{proof}
    First, the image of $\Pol_{2}^{\omega}(H)$ in $\mathfrak{D}(H)$ under the map above lies in $\mathfrak{D}_{2}(H)$, because $P^{-}$ is Hilbert-Schmidt.

    To prove the converse, we observe that the orthogonal projection $p:\mathrm{graph}(Z) \rightarrow \alpha(L^{+})$ is given by $(x,Zx) \mapsto Zx$.
    If $\hat{Z}: L^{+} \rightarrow \mathrm{graph}(Z)$ is the invertible operator $x \mapsto (x,Zx)$ we thus have $Z = p \hat{Z}$.
    This implies that if $Z$ is Hilbert-Schmidt, then $p$ must be Hilbert-Schmidt. 
    In other words, $\mathrm{graph}(Z) \sim L^{+}$.
    The symmetry of $\sim$ then implies that $\mathrm{graph}(Z) \in \Pol_{2}^{\omega}(H)$.
\end{proof}

The \emph{restricted symplectic group} is
\begin{equation*}
	\Sp_{2}(H) \defeq \{ u \in \Sp(H) \, : \, u(L^+) \sim L^{+} \}. 
\end{equation*}

\begin{proposition} \label{pr:symp_res_characterization}
    Let $u \in \Sp(H)$. The following are equivalent. 
    \begin{enumerate}
        \item $u \in \Sp_{2}(H)$;
        \item If $u$ is given in block form by (\ref{eq:block_form_symplectomorphism}) then $b$ is Hilbert-Schmidt;
        \item $u^{\pullb}J - J \text{ is Hilbert-Schmidt}$.  
    \end{enumerate}
\end{proposition}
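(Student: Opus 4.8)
The plan is to establish the chain (1) $\Leftrightarrow$ (2) $\Leftrightarrow$ (3).

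The equivalence (1) $\Leftrightarrow$ (2) is nothing but \cref{lem:BlockHilbertSchmidt}: by definition $u \in \Sp_{2}(H)$ means $u(L^{+}) \sim L^{+}$, and that lemma identifies this with Hilbert-Schmidtness of the block $b$ from \eqref{eq:block_form_symplectomorphism}. As noted just before \cref{prop:SymplecticEquivalence}, the Hilbert-Schmidt condition does not depend on which of the equivalent inner products one uses, so the statement is unambiguous.

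For (2) $\Leftrightarrow$ (3) the idea is to avoid the full four-block expansion and instead use the identity
\[
u^{\pullb}J - J = uJu^{-1} - J = (uJ - Ju)u^{-1} = [u,J]\,u^{-1}.
\]
Since $u$ and $u^{-1}$ are both bounded (\cref{le:top_left_invertible_symplecto}) and the Hilbert-Schmidt operators form a two-sided ideal in $\mc{B}(H_{\C})$, the operator $[u,J]u^{-1}$ is Hilbert-Schmidt if and only if $[u,J]$ is. Now write $J = i(P^{+} - P^{-})$, which in the decomposition $H_{\C} = L^{+} \oplus L^{-}$ is the diagonal block matrix with entries $i$ and $-i$; combining this with the block form \eqref{eq:block_form_symplectomorphism} of $u$, a one-line computation gives
\[
[u,J] = \begin{pmatrix} 0 & -2i\,\alpha b \alpha \\ 2i\, b & 0 \end{pmatrix}.
\]
Since the decomposition is orthogonal, this operator is Hilbert-Schmidt exactly when both off-diagonal blocks are; and $\alpha b \alpha : L^{-} \to L^{+}$ is Hilbert-Schmidt if and only if $b : L^{+} \to L^{-}$ is, because $\alpha$ restricts to conjugate-linear isometries $L^{+} \to L^{-}$ and $L^{-} \to L^{+}$, and pre- or post-composition with an isometry leaves the Hilbert-Schmidt norm unchanged. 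Hence $[u,J]$ is Hilbert-Schmidt iff $b$ is, which is (2) $\Leftrightarrow$ (3).

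There is no real obstacle here: the content is the observation that $u^{\pullb}J - J = [u,J]u^{-1}$, which reduces everything to the trivial block computation above and bypasses both the identities \eqref{eq:symplectomorphism_identities} and the explicit inverse formula \eqref{eq:symp_inverse_formula}. The only point requiring a little care is the bookkeeping with the conjugate-linear operator $\alpha$ when reading off Hilbert-Schmidtness from the block form, together with the (already settled) independence of the Hilbert-Schmidt property from the choice of equivalent inner product.
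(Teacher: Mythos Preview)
Your proof is correct and, in both halves, slightly more streamlined than the paper's.

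For (1) $\Leftrightarrow$ (2) you invoke \cref{lem:BlockHilbertSchmidt} directly, which is the cleanest route given the definition of $\Sp_{2}(H)$. The paper instead writes $uL^{+}$ as the graph of $b a^{-1}$ and appeals to the Siegel-disk correspondence \cref{le:LG_Siegel_disk_isomorphism_res}; this works but is a detour, since \cref{lem:BlockHilbertSchmidt} already encodes the content.

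For (2) $\Leftrightarrow$ (3) the paper computes the full block form of $u^{-1}Ju - J$ (equation \eqref{eq:J_deformation}), which requires the explicit inverse \eqref{eq:symp_inverse_formula} and then the symplectic identities \eqref{eq:symplectomorphism_identities} to simplify the diagonal entries. Your identity $u^{\pullb}J - J = [u,J]\,u^{-1}$ reduces the question to the commutator $[u,J]$, whose block form is purely off-diagonal with entries $\pm 2i b$ and $\pm 2i\,\alpha b \alpha$; Hilbert-Schmidtness is then immediate. This genuinely bypasses the algebra the paper relies on, at the cost of the (easy) ideal-property argument. Both approaches are sound; yours is the more economical one.
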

\begin{proof}
  By Lemma \ref{le:top_left_invertible_symplecto} $a$ is invertible. We have $u L^+$ is the image of $(a,\overline{b})^T L^+$, and thus is the graph of $\overline{b} a^{-1}$. Again since $a$ is invertible, $b$ is Hilbert-Schmidt if and only if $Z$ is Hilbert-Schmidt.  By  Lemma \ref{le:LG_Siegel_disk_isomorphism_res} conditions 1 and 2 are equivalent.  The equivalence of 2 and 3 follows from the computation
  \begin{equation} \label{eq:J_deformation}
    u^{-1} J u - J = 2i \left( \begin{array}{cc}   b^*b &  b^* \alpha a \alpha \\ - \alpha b^* \alpha a & - \alpha b^* b \alpha \end{array} \right). 
  \end{equation}
\end{proof}
\begin{exercise} \label{exer:complete_Hilbert_Schmidt_char}
    Show that \cref{eq:J_deformation} holds.
\end{exercise}

The actions of $\Sp(H)$ on the Siegel disk and Lagrangian Grassmannian restrict appropriately.
\begin{proposition} \label{pr:res_sym_preserves}
    The actions of $\Sp_{2}(H)$ preserves $\Pol_2^{\omega}(H)$ and $\mathfrak{D}_2(H)$. 
\end{proposition}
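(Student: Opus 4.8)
The plan is to show that $\Sp_2(H)$ is closed under the composition, so that the action descends, and that it maps $\Pol_2^\omega(H)$ into itself; the statement about $\mathfrak{D}_2(H)$ then follows from \cref{le:LG_Siegel_disk_isomorphism_res}. The key point is that the Hilbert--Schmidt operators form a two-sided ideal in the bounded operators, closed under addition. I would first observe that $\Sp_2(H)$ is a group: if $u, v \in \Sp_2(H)$ have block forms with off-diagonal parts $b_u, b_v$ (using \cref{eq:block_form_symplectomorphism} and \cref{pr:symp_res_characterization}), then the off-diagonal block of $uv$ is $a_u b_v + \alpha b_u \alpha\, \alpha a_v \alpha = a_u b_v + \alpha b_u a_v \alpha$, which is Hilbert--Schmidt since $b_u$ and $b_v$ are, and $a_u, a_v$ are bounded (invertible by \cref{le:top_left_invertible_symplecto}); and by \cref{eq:symp_inverse_formula} the off-diagonal block of $u^{-1}$ is $-b_u^*$, which is Hilbert--Schmidt iff $b_u$ is. Hence $\Sp_2(H)$ is a subgroup of $\Sp(H)$.

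Next I would check that $\Sp_2(H)$ preserves $\Pol_2^\omega(H)$. Let $u \in \Sp_2(H)$ and $W^+ \in \Pol_2^\omega(H)$, so $W^+ \sim L^+$, and recall $uL^+ \sim L^+$ by definition of $\Sp_2(H)$. Since $\sim$ is an equivalence relation (\cref{prop:SymplecticEquivalence}), from $W^+ \sim L^+$ and applying $u$ we need $uW^+ \sim uL^+$; this holds because $u$ is a symplectomorphism, hence a bounded invertible map carrying the splitting $H_\C = W^+ \oplus \alpha(W^+)$ to $H_\C = uW^+ \oplus \alpha(uW^+)$ (using that $u$ commutes with $\alpha$, being real), so the projection $W^+ \to \alpha(W^+)$ is conjugated by bounded operators into the projection $uW^+ \to \alpha(uW^+)$, preserving Hilbert--Schmidt-ness. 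Then transitivity of $\sim$ gives $uW^+ \sim uL^+ \sim L^+$, i.e.\ $uW^+ \in \Pol_2^\omega(H)$. Combined with \cref{cor:SpActsOnSiegelDisk}, which already tells us the formula for the action in $\mathfrak{D}$-coordinates, and \cref{le:LG_Siegel_disk_isomorphism_res}, which identifies $\Pol_2^\omega(H)$ with $\mathfrak{D}_2(H)$ equivariantly, the preservation of $\mathfrak{D}_2(H)$ is immediate.

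The only mildly delicate point, and the main thing to get right, is the bookkeeping in the first step: verifying that the composition of two elements given in the block form \cref{eq:block_form_symplectomorphism} again has that block form with a Hilbert--Schmidt off-diagonal block, and that invertibility of the diagonal blocks (needed to pass between ``$b$ Hilbert--Schmidt'' and ``$uL^+ \sim L^+$'' via \cref{lem:BlockHilbertSchmidt}) is available. All of this reduces to the ideal property of $\mc{B}_2$ and \cref{le:top_left_invertible_symplecto}, so there is no genuine obstacle; the argument is a routine assembly of the lemmas already proved.
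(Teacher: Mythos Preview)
Your argument is correct. It differs in emphasis from the paper's short proof, so a comparison is worthwhile.

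The paper dispatches the $\Pol_2^\omega(H)$ claim in one line (``clear by definition''), and then handles $\mathfrak{D}_2(H)$ \emph{directly} via the explicit fractional-linear formula from \cref{cor:SpActsOnSiegelDisk}: the image of $Z$ under $u$ is $(b+\alpha a\alpha Z)(a+\alpha b\alpha Z)^{-1}$, and since $b$ is Hilbert--Schmidt (by part~2 of \cref{pr:symp_res_characterization}) and $Z$ is Hilbert--Schmidt, the numerator is Hilbert--Schmidt while the inverse factor is bounded, so the result lies in $\mathfrak{D}_2(H)$. Your route is the reverse: you give a genuine proof for $\Pol_2^\omega(H)$ (using that conjugation by the bounded invertible $u$ preserves the Hilbert--Schmidt property of the relevant projection, so $uW^+\sim uL^+$, and then transitivity of $\sim$ from \cref{prop:SymplecticEquivalence}), and only then transfer to $\mathfrak{D}_2(H)$ via \cref{le:LG_Siegel_disk_isomorphism_res}. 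Your version actually justifies the ``clear by definition'' step that the paper leaves to the reader, and your additional check that $\Sp_2(H)$ is a subgroup (via the block form and the ideal property of $\mc{B}_2$) is a useful complement that the paper does not spell out. The paper's direct computation in $\mathfrak{D}_2(H)$ is slightly quicker, but your equivalence-relation argument is more conceptual and makes explicit why no hypothesis beyond $u\in\Sp_2(H)$ is needed.
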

\begin{proof}
    That $\Sp_{2}(H)$ preserves $\Pol_{2}^{\omega}(H)$ is clear by definition.

    That $\Sp_{2}(H)$ preserves $\mathfrak{D}_{2}(H)$ follows from \cref{cor:SpActsOnSiegelDisk} together with part 2 of \cref{pr:symp_res_characterization}.
\end{proof} 

\begin{theorem} The Siegel disk $\mathfrak{D}(H)$ is an open subset of the Banach space of bounded linear operators on $H$ satisfying $\alpha Z \alpha = Z^*$, and in particular is a complex Banach manifold. The restricted Siegel disk $\mathcal{D}_2(H)$ is an open subset of the Hilbert space of Hilbert-Schmidt operators on $H$ satisfying $\alpha Z \alpha = Z^*$, and in particular is a complex Hilbert manifold.   
\end{theorem}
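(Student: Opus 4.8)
The plan is to argue in three steps. Throughout, regard the Siegel disk as living inside the operator space $V \defeq \{ Z \in \mc{B}(L^{+},L^{-}) : Z = \alpha Z^{*}\alpha \}$, so that $\mathfrak{D}(H) = \{ Z \in V : \1 - Z^{*}Z > 0 \}$ by \cref{eq:SiegelDiskDef}; using the decomposition $H_{\C}=L^{+}\oplus L^{-}$, this $V$ is what the statement calls the space of bounded operators on $H$ with $\alpha Z\alpha = Z^{*}$. Step (i): show $V$ is a complex Banach space and $V_{2}\defeq V\cap\mc{B}_{2}(L^{+},L^{-})$ a complex Hilbert space. Step (ii): show $\mathfrak{D}(H)$ is open in $V$ and $\mathfrak{D}_{2}(H)=\mathfrak{D}(H)\cap\mc{B}_{2}(L^{+},L^{-})$ is open in $V_{2}$. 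Step (iii): invoke the trivial fact that an open subset of a complex Banach, resp.\ Hilbert, space is a complex Banach, resp.\ Hilbert, manifold.

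For step (i) I would observe that $\iota\colon Z\mapsto\alpha Z^{*}\alpha$ is a bounded \emph{complex}-linear map of $\mc{B}(L^{+},L^{-})$ to itself: the adjoint $Z\mapsto Z^{*}$ is conjugate-linear, conjugation $T\mapsto \alpha T\alpha$ by the conjugate-linear isometry $\alpha$ is again conjugate-linear, and the composite of two conjugate-linear maps is complex-linear. This is precisely why $V$, although cut out by a reality-type condition, is a genuine \emph{complex} subspace — and hence why $\mathfrak{D}(H)$ will be a complex, not merely real, manifold. Since $\alpha$ restricts to conjugate-linear isometries $L^{\pm}\to L^{\mp}$ and the adjoint preserves the operator norm, $\iota$ is an operator-norm isometry, so $V=\ker(\iota-\mathrm{id})$ is a norm-closed complex subspace of the Banach space $\mc{B}(L^{+},L^{-})$, i.e.\ a complex Banach space. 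The same $\iota$ maps $\mc{B}_{2}(L^{+},L^{-})$ into itself isometrically for the Hilbert--Schmidt norm (again because $\alpha|_{L^{\pm}}$ and the adjoint are isometries in that norm), so $V_{2}=\ker(\iota|_{\mc{B}_{2}}-\mathrm{id})$ is a closed complex subspace of the complex Hilbert space $\mc{B}_{2}(L^{+},L^{-})$, hence a complex Hilbert space.

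For step (ii) I would use that $Z^{*}Z\geq 0$ forces $\1 - Z^{*}Z\leq\1$, so the condition $\1 - Z^{*}Z>0$ — read, as it must be for positivity to be an open condition, as ``$\1-Z^{*}Z$ invertible'', equivalently ``$\1-Z^{*}Z\geq\delta\1$ for some $\delta>0$'', equivalently $\|Z\|_{\mathrm{op}}<1$ — exhibits $\mathfrak{D}(H)$ as the open unit ball $\{Z\in V:\|Z\|_{\mathrm{op}}<1\}$ of $V$, which is open. Equivalently, without naming the unit ball: given $Z_{0}\in\mathfrak{D}(H)$ pick $\delta>0$ with $\1-Z_{0}^{*}Z_{0}\geq\delta\1$; since $Z\mapsto Z^{*}Z$ is operator-norm continuous (from $\|Z^{*}Z-Z_{0}^{*}Z_{0}\|\leq\|Z-Z_{0}\|\,\|Z\|+\|Z_{0}\|\,\|Z-Z_{0}\|$), every $Z\in V$ with $\|Z-Z_{0}\|_{\mathrm{op}}$ small enough satisfies $\1-Z^{*}Z\geq\tfrac{\delta}{2}\1>0$, so lies in $\mathfrak{D}(H)$. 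Running the identical argument inside $V_{2}$, using that the Hilbert--Schmidt norm dominates the operator norm (so the operator norm is $\|\cdot\|_{\mc{B}_{2}}$-continuous on $V_{2}$), shows $\mathfrak{D}_{2}(H)$ is open in $V_{2}$.

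Step (iii) is then immediate: an open subset $U$ of a complex Banach, resp.\ Hilbert, space $E$ is a complex Banach, resp.\ Hilbert, manifold modelled on $E$, with atlas the single chart $\mathrm{id}\colon U\hookrightarrow E$. I expect essentially no obstacle here; the only points requiring any care are the complex-linearity of $\iota$ in step (i) — the reason the two reality-constrained operator spaces are complex Banach/Hilbert spaces at all — and fixing the convention that ``$\1-Z^{*}Z>0$'' means invertible-positive (i.e.\ $\|Z\|_{\mathrm{op}}<1$), which is exactly what makes the Siegel-disk condition an open one.
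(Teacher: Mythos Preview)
Your proof is correct and follows essentially the same route as the paper: identify the symmetry condition as cutting out a closed subspace, observe that $\1-Z^{*}Z>0$ amounts to $\|Z\|_{\mathrm{op}}<1$ and hence is open, and use that the Hilbert--Schmidt norm dominates the operator norm for the restricted case. Your explicit verification that $Z\mapsto\alpha Z^{*}\alpha$ is \emph{complex}-linear (as a composite of two conjugate-linear maps) and your care in reading ``$>0$'' as invertible-positive are welcome points of rigor that the paper's terser proof leaves implicit.
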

\begin{proof}
    Let $X$ denote the set of operators satisfying $\alpha Z \alpha = Z^*$. $X$ is a closed linear subspace of $\mc{B}(L^+,L^-)$, so it is itself a Banach space.  
    We have that $I- Z^* Z$ is positive definite if and only if $\| Z \|<1$. Thus $\mathcal{D}(H)$ is an open subset of $\mc{B}(L^+,L^-)$, and hence also an open subset of $X \cap \mc{B}(L^+,L^-)$. 

    The Hilbert-Schmidt norm controls the operator norm; that is, inclusion from the Hilbert-Schmidt operators into the bounded linear operators is a bounded map. Thus $\mathcal{D}_2(H)$ is an open subset of the space of Hilbert-Schmidt operators, and the remaining claims follow similarly.
\end{proof} 
By Propositions \ref{prop:GrassmannianIsSiegelDisk} and \ref{pr:res_sym_preserves} this gives the Grassmannian and restricted Grassmannian Banach and Hilbert manifold structures respectively.

\begin{example}  Referring to Example \ref{ex:smooth_example_bosonicH} and Remark \ref{re:universal_Teich_space}, it was shown by Segal that for smooth $\phi$ the operator $\mathcal{C}_\phi$ is an element of $\Sp_{2}(H)$. It was shown by Takhtajan and Teo that for a quasisymmetry $\phi$, $\mathcal{C}_\phi$ is in $\Sp_{2}(H)$ if and only if $\phi$ is what is known as a Weil-Petersson quasisymmetry. The definition is beyond the scope of the paper; we mention only that the Weil-Petersson quasisymmetries modulo the M\"obius transformations of the disk, $\text{QS}_{\text{WP}}(\mathbb{S}^1)/\text{M\"ob}(\mathbb{S}^1)$, is a model of the Weil-Petersson universal Teichm\"uller space.  
\end{example}

\section{The Grassmannian of polarizations associated to an inner product}\label{sec:fermions}

In this section, we fix an inner product, and describe the Lagrangian Grassmannian of orthogonal polarizations associated to that inner product. We show that it is a complex Banach manifold.
We also consider the ``restricted'' Grassmannian and orthogonal groups, and established that the the restricted Grassmannian is a Hilbert manifold.

We assume again that $H$ is equipped with a compatible triple $(g,J,\omega)$, and we write $H_{\C} = L^{+} \oplus L^{-}$ for the decomposition w.r.t.~$J$, i.e.~$L^{\pm} = \ker(J \mp i)$.
Now, choose an orthonormal basis $\{ e_{i} \}_{i \geqslant 1}$ for $L^{+}$.
For $i \leqslant 1$ we set $e_{i} = \alpha e_{-i}$, which yields an orthonormal basis $\{ e_{i} \}_{i \leqslant 1}$ for $L^{-} = \alpha(L^{+})$.
Nothing material will depend on these choices, but they will be convenient for the proofs to come.

Motivated by \cref{thm:FermionEquivalence} we introduce the \emph{restricted Lagrangian Grassmannian}
\begin{align*}
    \Pol_{2}^{g}(H) &\defeq \{ W^{+} \in \Pol^{g}(H) \mid L^{+} \rightarrow \alpha(W^{+}) \text{ is Hilbert-Schmidt} \},
\end{align*}
where $L^{+} \rightarrow \alpha(W^{+})$ is the restriction of the orthogonal projection $H_{\C} \rightarrow \alpha(W^{+})$ to $L^{+} \subset H_{\C}$.
We define a relation $\sim$ on $\Pol^{g}(H)$ by saying that $W_{1}^{+} \sim W_{2}^{+}$ if the orthogonal projection $W_{1}^{+} \rightarrow \alpha (W_{2}^{+})$ is Hilbert-Schmidt.
\begin{exercise}\label{ex:EquivalenceRelation}
   Prove that $\sim$ defines an equivalence relation on $\Pol^{g}(H)$. (This exercise is challenging.)
\end{exercise}
We also introduce the \emph{restricted orthogonal group}
\begin{equation*}
    \O_{2}(H) \defeq \{ u \in \O(H) \mid u(L^{+}) \in \Pol_{2}^{g}(H) \}.
\end{equation*}
Given an operator $u \in \O(H)$, let us write
\begin{equation*}
    u = \begin{pmatrix}
        a & b \\
        c & d
    \end{pmatrix},
\end{equation*}
with respect to the decomposition $H_{\C} = L^{+} \oplus L^{-}$.
\begin{lemma}
    We have that $u \in \O_{2}(H)$ if and only if $c$ is Hilbert-Schmidt, if and only if $b$ is Hilbert-Schmidt.
\end{lemma}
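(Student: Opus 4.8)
The plan is to use the orthogonality relations satisfied by $u \in \O(H)$ (that is, the conditions $u^* u = \1$ and $u u^* = \1$ read off block-by-block with respect to $H_{\C} = L^+ \oplus L^-$) to relate the four blocks $a, b, c, d$, and then to identify the projection $L^+ \to \alpha(W^+)$ with one of these blocks up to composition with invertible operators. First I would recall that since $u$ commutes with the real structure $\alpha$ (because it restricts to $H$), the blocks satisfy $\alpha a \alpha = a$, $\alpha d \alpha = d$, and $\alpha b = c \alpha$; in particular $c$ is Hilbert-Schmidt if and only if $b$ is, since $\alpha$ is an isometric bijection. This immediately disposes of the second ``if and only if'' in the statement, so the real content is the equivalence $u \in \O_2(H) \iff c$ Hilbert-Schmidt.

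Next I would analyze the orthogonal projection $\pi \colon L^+ \to \alpha(W^+)$, where $W^+ = u(L^+)$, so $\alpha(W^+) = u(L^-)$ (using $\alpha(L^+) = L^-$ and that $u$ commutes with $\alpha$). The key step is to show this projection is Hilbert-Schmidt if and only if $c$ is. Writing $u$ in block form, the image $u(L^+)$ is spanned by vectors of the form $(ax, cx)$, and $u(L^-)$ by $(by, dy)$. I would use the $\O(H)$-relations to show that $a$ (equivalently $d$) is invertible: from $a^* a + c^* c = \1$ we get $\|ax\|^2 = \|x\|^2 - \|cx\|^2$, and combined with the analogous relation from $u u^* = \1$ applied to $u^{-1} = u^*$, a Fredholm/bounded-below argument (exactly as in \cref{le:top_left_invertible_symplecto}) gives invertibility of $a$ and $d$. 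Granting this, one can compute the orthogonal projection onto $u(L^-)$ along its orthogonal complement; the restriction to $L^+$ then factors through $c$ composed with bounded invertible operators built from $a$, $d$, and $(aa^* + bb^*)$-type expressions, so it is Hilbert-Schmidt exactly when $c$ is. A cleaner route is to note that $u^{-1} = u^*$ has block $(u^*)_{L^- \to L^+} = c^*$; since $u^*$ maps $u(L^-)$ back to $L^-$ isometrically, the Hilbert-Schmidt class of the projection $L^+ \to u(L^-)$ is unchanged under post-composition with $u^*$, landing us at an operator $L^+ \to L^-$ which differs from $c^*$ (hence from $c$) by invertible factors.

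Finally I would assemble: $u \in \O_2(H)$ means by definition that $u(L^+) \in \Pol_2^g(H)$, i.e. the orthogonal projection $L^+ \to \alpha(u(L^+)) = u(L^-)$ is Hilbert-Schmidt, which by the previous paragraph holds iff $c$ is Hilbert-Schmidt, which holds iff $b$ is Hilbert-Schmidt. The main obstacle I anticipate is the bookkeeping in showing that the projection operator, when expressed in block form, genuinely differs from $c$ (or $c^*$) only by \emph{invertible} operators rather than merely bounded ones — this is where invertibility of $a$ and $d$ is essential, and one must be careful that the ``denominators'' appearing (such as $\1 + $ products of small-norm pieces, or $d d^* + c c^*$) are bona fide invertible on the relevant subspace. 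This is precisely the analogue of the symplectic computation in \cref{lem:BlockHilbertSchmidt}, so I would model the argument closely on that proof, substituting the orthogonal relations for the symplectic ones.
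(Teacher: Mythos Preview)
Your proposal contains a genuine error in the first approach: you claim that ``a Fredholm/bounded-below argument (exactly as in \cref{le:top_left_invertible_symplecto}) gives invertibility of $a$ and $d$.'' This is false in the orthogonal setting. The relation you obtain from $u^*u = \1$ is $a^*a + c^*c = \1$, with a \emph{plus} sign, so $\|ax\|^2 = \|x\|^2 - \|cx\|^2 \leq \|x\|^2$, which is an upper bound, not a lower one. Contrast this with the symplectic relation $a^*a - b^*b = \1$, which does give $\|ax\|^2 \geq \|x\|^2$. In the orthogonal case $a$ and $d$ are only Fredholm (as the paper records in the corollary immediately following this lemma), and in general they need not be invertible. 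So any argument that relies on factoring the projection through $c$ times \emph{invertible} blocks built from $a$ and $d$ will not go through as stated.

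Your ``cleaner route,'' however, is essentially correct and does not require invertibility of $a$ or $d$. Post-composing the orthogonal projection $\pi\colon L^+ \to u(L^-)$ with the unitary $u^*|_{u(L^-)}\colon u(L^-) \to L^-$ gives $P^- u^*|_{L^+} = b^*$ on the nose (not merely up to invertible factors), so $\pi$ is Hilbert--Schmidt if and only if $b^*$ is, if and only if $b$ is. Combined with your observation that $\alpha b = c\alpha$ forces $b$ and $c$ to be Hilbert--Schmidt together, this completes the proof. The paper takes a slightly different but equally short route: it writes down an orthonormal basis $\{u e_i\}_{i\leq -1}$ for $u(L^-)$, computes the projection of each $e_j$ ($j\geq 1$) onto $u(L^-)$ in this basis, and reads off directly that the Hilbert--Schmidt norm of $\pi$ equals that of $b$. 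Both arguments avoid the invertibility trap; yours is more operator-theoretic, the paper's more explicitly coordinate-based.
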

\begin{proof}
    First, we observe that $\alpha(u(L^{+})) = u(L^{-})$.
    The set $\{ ue_{i} \}_{i \leqslant 1} = \{ (be_{i},de_{i}) \}_{i \leqslant 1}$ is then an orthonormal basis for $u(L^{-})$.
    The orthogonal projection of $e_{j}$ onto $u(L^{-})$, for $j \geqslant 1$ is given by
    \begin{equation*}
        \sum_{i \leqslant 1} \langle e_{j}, be_{i} \rangle (be_{i},de_{i}).
    \end{equation*}
    It follows that the Hilbert-Schmidt norm of the orthogonal projection $L^{+} \rightarrow u(L^{-})$ is given by
    \begin{align*}
        \sum_{j \geqslant 1} \|\sum_{i \leqslant 1} \langle e_{j}, be_{i} \rangle (be_{i},de_{i}) \|^{2} &= \sum_{-i,j \geqslant 1} | \langle e_{j}, b e_{i} \rangle |^{2},
    \end{align*}
    which is simply the Hilbert-Schmidt norm of the operator $b: L^{-} \rightarrow L^{+}$.
    It follows that $u(L^{+}) \in \O_{2}(H)$ if and only if $b$ is Hilbert-Schmidt.
    That $b$ is Hilbert-Schmidt if and only if $c$ is Hilbert-Schmidt follows from the equation $\alpha c = b \alpha$.
\end{proof}
\begin{corollary}
    If $u \in \O_{2}(H)$, then $a$ and $d$ are Fredholm.
\end{corollary}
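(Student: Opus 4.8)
The plan is to exploit the fact that an element of $\O(H)$, extended complex-linearly to $H_{\C}$, is a \emph{unitary} operator for the sesquilinear extension of $g$, and then to read off from $u^{\ast}u = uu^{\ast} = \1$ that $a$ and $d$ are each the identity plus a compact perturbation. First I would note that for $u \in \O(H)$ and $v = v_{1} + iv_{2}$, $w = w_{1}+iw_{2}$ with $v_{j},w_{j}\in H$, bilinearity and sesquilinearity give $g(uv,uw) = g(v,w)$, so $u$ is unitary on $(H_{\C},g)$ and hence $u^{\ast} = u^{-1}$. Since $L^{+}$ and $L^{-}$ are orthogonal with respect to $g$ (\cref{le:standard_polarization_all_properties}) and $H_{\C} = L^{+}\oplus L^{-}$, the adjoint decomposes in block form as
\[
    u^{\ast} = \begin{pmatrix} a^{\ast} & c^{\ast} \\ b^{\ast} & d^{\ast} \end{pmatrix}.
\]

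Next I would compute $u^{\ast}u$ and $uu^{\ast}$ block-wise and set them equal to $\1$. This yields, on $L^{+}$,
\[
    a^{\ast}a + c^{\ast}c = \1, \qquad aa^{\ast} + bb^{\ast} = \1,
\]
and, on $L^{-}$,
\[
    d^{\ast}d + b^{\ast}b = \1, \qquad dd^{\ast} + cc^{\ast} = \1.
\]
By the lemma immediately preceding the corollary, $u \in \O_{2}(H)$ means $b$ and $c$ are Hilbert--Schmidt, hence compact; therefore $c^{\ast}c$, $bb^{\ast}$, $b^{\ast}b$ and $cc^{\ast}$ are compact (in fact trace-class). Consequently $a^{\ast}a$, $aa^{\ast}$, $d^{\ast}d$ and $dd^{\ast}$ each differ from $\1$ by a compact operator.

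The conclusion is then immediate from Atkinson's theorem: the identities above say that in the Calkin algebra $[a^{\ast}][a] = [a][a^{\ast}] = [\1]$ and $[d^{\ast}][d] = [d][d^{\ast}] = [\1]$, i.e.\ $[a]$ and $[d]$ are invertible, so $a$ and $d$ are Fredholm. (If one prefers to avoid quoting Atkinson's theorem, the same conclusion follows by hand: from $a^{\ast}a = \1 - (\text{compact})$ one gets that $\ker a = \ker a^{\ast}a$ is a finite-dimensional eigenspace of a compact operator and that $a$ has closed range, and likewise $\ker a^{\ast} = \ker aa^{\ast}$ is finite-dimensional, giving finite-dimensional cokernel; same for $d$.) I do not expect any real obstacle here — the only points requiring a moment of care are that the adjoints are taken with respect to the sesquilinear extension of $g$ and that the block decomposition of the adjoint is the naive one, both of which rest on the orthogonality of $L^{+}$ and $L^{-}$.
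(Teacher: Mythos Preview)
Your proposal is correct and follows essentially the same argument as the paper: from $u^{*}u = uu^{*} = \1$ one reads off $a^{*}a + c^{*}c = \1$ and $aa^{*} + bb^{*} = \1$, and since $b,c$ are Hilbert--Schmidt the compact perturbations give that $a^{*}$ is a parametrix for $a$ (and likewise for $d$). The paper phrases the conclusion as ``$a^{*}$ is a parametrix for $a$'' rather than invoking Atkinson's theorem by name, but the content is identical.
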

\begin{proof}
    It follows from the fact that $uu^{*} = \1$ that $aa^{*} + bb^{*} = \1$, and from $u^{*}u = \1$ that $a^{*}a + c^{*}c = \1$.
    Because $b$ and $c$ are Hilbert-Schmidt, we have that $bb^{*}$ and $c^{*}c$ are trace-class, thus in particular compact.
    It follows that $a$ is Fredholm (and $a^{*}$ is a ``parametrix'' for $a$).
\end{proof}
Now, suppose that $Z: L^{+} \rightarrow L^{-}$ is a linear operator.
The computation
\begin{equation*}
    \langle (v,Zv), \alpha (w, Zw) \rangle = \langle v, \alpha Zw \rangle + \langle Zv, \alpha w \rangle = \langle v, (\alpha Z + Z^{*} \alpha)w \rangle,
\end{equation*}
shows that $\gra(Z)$ is perpendicular to $\alpha (\gra(Z))$ if and only if $\alpha Z \alpha = -Z^{*}$.
\begin{exercise}\label{ex:PolarCondition}
    Suppose that $\alpha Z \alpha = -Z^{*}$, and show that if $(x,y) \in H_{\C}$ is perpendicular to both $\gra(Z)$ and $\alpha(\gra(Z))$, then $(x,y) = 0$.
\end{exercise}
It follows from \cref{ex:PolarCondition} that $\gra(Z) \in \Pol^{g}(H)$ if and only if $\alpha Z \alpha = - Z^{*}$.
Following \cite[Section 7.1]{pressley_loop_2003}, we equip $\Pol_{2}^{g}(H)$ with the structure of complex manifold.
To be precise, we shall equip $\Pol_{2}^{g}(H)$ with an atlas modeled on the complex Hilbert space
\begin{equation*}
    \mc{B}_{2}^{\alpha}(L^{+},L^{-}) = \{ Z \in \mc{B}_{2}(L^{+},L^{-}) \mid \alpha Z \alpha = - Z^{*} \},
\end{equation*}
such that the transition functions are holomorphic.
For $S$ a finite subset of the positive integers let $W_{S}$ be the closed linear span of
\begin{equation*}
    \{ e_{k}, e_{-l} \mid k \notin S, \, l \in S \}.
\end{equation*}
We see that $W_{S} \in \Pol_{2}^{g}(H)$.
We shall use the set of finite subsets of the positive integers as index set for our atlas.
\begin{lemma}\label{lem:FredholmZero}
    If $W^{+} \in \Pol_{2}^{g}(H)$, then the orthogonal projection $P:W^{+} \rightarrow L^{+}$ is a Fredholm operator, and $\mathrm{coker}(P) = \alpha \ker(P)$.
    In particular, the index of $P$ is zero.
\end{lemma}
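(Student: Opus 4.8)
The plan is to treat $P \defeq P^{+}|_{W^{+}}\colon W^{+}\to L^{+}$ and its Hilbert-space adjoint as a two-sided parametrix pair modulo Hilbert--Schmidt operators, which gives Fredholmness, and then to read off $\ker P$ and $\operatorname{coker}P$ as honest intersections of subspaces, relating them through the real structure $\alpha$. Throughout, write $P^{\pm}\colon H_{\C}\to L^{\pm}$ for the orthogonal projections, set $W^{-}\defeq\alpha(W^{+})$, and write $Q^{\pm}\colon H_{\C}\to W^{\pm}$ for the orthogonal projections onto the summands of the polarization. A one-line inner-product computation shows that the adjoint of $P$ is $P^{*}=Q^{+}|_{L^{+}}\colon L^{+}\to W^{+}$ (the only operator $L^{+}\to W^{+}$ whose composition back into $H_{\C}$ agrees with $\langle w,\,\cdot\,\rangle$ on $W^{+}$).

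First I would compute the two defect operators. Since $H_{\C}=L^{+}\oplus L^{-}$ is an orthogonal sum, $\|w\|^{2}=\|P^{+}w\|^{2}+\|P^{-}w\|^{2}$ for $w\in W^{+}$, and polarizing this identity gives $P^{*}P=\1_{W^{+}}-(P^{-}|_{W^{+}})^{*}(P^{-}|_{W^{+}})$; symmetrically, using $H_{\C}=W^{+}\oplus W^{-}$, one gets $PP^{*}=\1_{L^{+}}-(Q^{-}|_{L^{+}})^{*}(Q^{-}|_{L^{+}})$. Now $Q^{-}|_{L^{+}}\colon L^{+}\to\alpha(W^{+})$ is precisely the operator whose Hilbert--Schmidt-ness is the definition of $W^{+}\in\Pol_{2}^{g}(H)$, and $P^{-}|_{W^{+}}$ is Hilbert--Schmidt as well: its adjoint is $Q^{+}|_{L^{-}}$, and conjugating by the isometric conjugate-linear involution $\alpha$ — which sends $L^{+}$ to $L^{-}$ and $W^{-}$ to $W^{+}$, hence the orthogonal projection $Q^{-}$ to $Q^{+}$ — identifies $Q^{+}|_{L^{-}}$ with $\alpha\,(Q^{-}|_{L^{+}})\,\alpha$, which is Hilbert--Schmidt. (Alternatively one may invoke the symmetry of $\sim$ from Exercise \ref{ex:EquivalenceRelation}.) Thus $P^{*}P-\1$ and $PP^{*}-\1$ are trace-class, in particular compact, so by Atkinson's theorem $P$ is Fredholm; in particular it has closed range.

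Next I would identify kernel and cokernel explicitly. Directly, $\ker P=\{w\in W^{+}:P^{+}w=0\}=W^{+}\cap L^{-}$ and $\ker P^{*}=\{l\in L^{+}:Q^{+}l=0\}=L^{+}\cap W^{-}$. Since $\alpha$ is a conjugate-linear involution with $\alpha(L^{-})=L^{+}$ and $\alpha(W^{+})=W^{-}$, applying it yields $\alpha(\ker P)=\alpha(W^{+})\cap\alpha(L^{-})=W^{-}\cap L^{+}=\ker P^{*}$. Because $P$ has closed range, $\operatorname{coker}(P)=L^{+}/\operatorname{range}(P)$ is canonically identified with $(\operatorname{range}P)^{\perp}=\ker P^{*}$, which we have just shown equals $\alpha(\ker P)$; this is the asserted equality. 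Finally $\alpha$ restricts to a (conjugate-linear) bijection $\ker P\to\ker P^{*}$, so $\dim\operatorname{coker}(P)=\dim\ker P$ and the index of $P$ is zero.

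I do not expect a genuine obstacle here: the load-bearing facts are the identification $P^{*}=Q^{+}|_{L^{+}}$, the two block computations of $P^{*}P$ and $PP^{*}$, and the observation that $P^{-}|_{W^{+}}$ inherits Hilbert--Schmidt-ness from the defining condition on $W^{+}$ via $\alpha$; once these are in place the conclusion is just the standard parametrix/Fredholm-index machinery.
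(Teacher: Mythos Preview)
Your proof is correct and takes a genuinely different route from the paper's. The paper chooses an element $u\in\O_{2}(H)$ with $u(L^{+})=W^{+}$ (using the transitivity established in \cref{lem:HomogeneousSets}), writes $u$ in block form, and transports the already-known Fredholmness of the diagonal block $a\colon L^{+}\to L^{+}$ to $P$ by observing that $P\circ(u|_{L^{+}})=a$; kernel and cokernel are then identified as the intersections $W^{+}\cap L^{-}$ and $L^{+}\cap W^{-}$ and matched via $\alpha$, just as you do. Your argument bypasses the group action entirely: you exhibit $P^{*}=Q^{+}|_{L^{+}}$ as a two-sided parametrix modulo trace-class, using the defining Hilbert--Schmidt hypothesis for one defect and the $\alpha$-conjugation trick (equivalently the symmetry of $\sim$) for the other, and then invoke Atkinson. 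This is more self-contained---it does not need the transitivity of $\O_{2}(H)$ or the prior Corollary that $a$ is Fredholm---while the paper's approach has the virtue of making explicit the link between $P$ and the block $a$, which is useful elsewhere. The identification of $\ker P$, $\ker P^{*}$ and their relation through $\alpha$ is essentially identical in both arguments.
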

\begin{proof}  
    Let $W^{+} \in \Pol_{2}^{g}(H)$, and write $P: W^{+} \rightarrow L^{+}$ for the orthogonal projection.
    There exists an operator $u \in \O_{2}(H)$ such that $u(L^{+}) = W^{+}$.
    As before, we write
    \begin{equation*}
        u = \begin{pmatrix}
            a & b \\
            c & d
        \end{pmatrix},
    \end{equation*}
    where $a: L^{+} \rightarrow L^{+}$ is Fredholm.
    Now, we claim that the map
    \begin{equation*}
        L^{+} \rightarrow W^{+}, x \mapsto u(x),
    \end{equation*}
    restricts to a bijection from $\ker(a)$ to $\ker(P)$.
    Indeed, $x \in \ker(a) \subset L^{+}$ if and only if $u(x) = (0, cx)$.
    Similarly $v \in \ker(P) \subset W^{+} = u(L^{+})$ if and only if $v = (0,cx)$.
    It thus follows that $\ker(P)$ is finite-dimensional, because $\ker(a)$ is (because $a$ is Fredholm).
    Now if $x \in L^{+}$ is arbitrary, then we have $u(x) = (ax, cx)$, and thus $Pu(x) = ax)$.
    it follows that the range of $P$ is given by the range of $a$, which is closed, and has finite-dimensional cokernel, because $a$ is Fredholm.
    
    Observe that the kernel of $P$ is given by $W^{+} \cap \alpha(L^{+})$.
    We claim that the cokernel of $P$ is $\alpha (W^{+}) \cap L^{+}$.
    In other words $\mathrm{Im}(P)^{\perp} = \alpha (W^{+}) \cap L^{+}$.
    Let $v \in \alpha(W^{+}) \cap L^{+}$ and $w \in W^{+}$, and denote by $P^{+}: H_{\C} \rightarrow L^{+}$ the orthogonal projection, so that $P^{+}|_{W^{+}} = P$.
    We then compute
    \begin{equation*}
        \langle v, Pw \rangle = \langle v, P^{+}w \rangle = \langle P^{+}v, w \rangle = \langle v,w \rangle = 0. 
    \end{equation*}
    So $v \in \mathrm{Im}(P)^\perp$.  Conversely, if $v \in \mathrm{Im}(P)^\perp$, then  $0=\langle v, Pw \rangle =  \langle v, w \rangle $ for all $w \in W^+$ so $v \in \alpha(W^+)$.
\end{proof}
\begin{corollary}\label{cor:EquivalentFredholmZero}
    If $W_{1}^{+} \sim W_{2}^{+}$, then the orthogonal projection $W_{1}^{+} \rightarrow W_{2}^{+}$ is a Fredholm operator of index zero.
\end{corollary}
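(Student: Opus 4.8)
The plan is to run the argument of \cref{lem:FredholmZero}, using orthogonality of the polarizations together with the real structure $\alpha$ to control both the kernel and the cokernel of the projection. Write $W_i^{-}=\alpha(W_i^{+})$, let $P_i^{\pm}\colon H_{\C}\to W_i^{\pm}$ be the orthogonal projections, and set $Q=P_2^{+}|_{W_1^{+}}\colon W_1^{+}\to W_2^{+}$ and $N=P_2^{-}|_{W_1^{+}}\colon W_1^{+}\to W_2^{-}$. By definition, the hypothesis $W_1^{+}\sim W_2^{+}$ says exactly that $N$ is Hilbert-Schmidt. A one-line computation with the $H_{\C}$-inner product shows that $Q^{*}\colon W_2^{+}\to W_1^{+}$ is again an orthogonal projection, namely $Q^{*}=P_1^{+}|_{W_2^{+}}$, and $\operatorname{coker} Q$ will turn out to be $\ker Q^{*}$ once we know $Q$ has closed range.

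Next I would establish the operator identity $\1_{W_1^{+}}-Q^{*}Q=N^{*}N$. Since $H_{\C}=W_2^{+}\oplus W_2^{-}$ is an \emph{orthogonal} direct sum, every $v\in W_1^{+}$ satisfies $\|v\|^{2}=\|Qv\|^{2}+\|Nv\|^{2}$, i.e.\ $\langle(\1-Q^{*}Q)v,v\rangle=\langle N^{*}Nv,v\rangle$; as both sides are the quadratic forms of self-adjoint operators, the identity follows by polarization. Because $N$ is Hilbert-Schmidt, $N^{*}N$ is trace class, hence compact, so $Q^{*}Q=\1-K$ with $K$ compact and self-adjoint. Consequently $Q^{*}Q$ is Fredholm; together with $\ker Q=\ker Q^{*}Q$ and the standard fact that $Q$ has closed range whenever $Q^{*}Q$ does, this shows that $Q$ has finite-dimensional kernel and closed range, and thus $\operatorname{coker} Q=(\operatorname{ran} Q)^{\perp}=\ker Q^{*}$.

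Finally, for the index I would use $\alpha$. Orthogonality of the two polarizations gives $\ker Q=W_1^{+}\cap W_2^{-}=W_1^{+}\cap\alpha(W_2^{+})$ and, symmetrically, $\ker Q^{*}=W_2^{+}\cap W_1^{-}=W_2^{+}\cap\alpha(W_1^{+})$. Using $\alpha^{2}=\1$ and $\alpha(W_i^{+})=W_i^{-}$, the conjugate-linear isometry $\alpha$ carries $\ker Q$ bijectively onto $\ker Q^{*}$, so $\dim_{\C}\ker Q=\dim_{\C}\ker Q^{*}$ and hence $Q$ is Fredholm with index $\dim\ker Q-\dim\operatorname{coker} Q=0$.

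The only step that is not pure bookkeeping is the implication ``$Q^{*}Q=\1-(\text{compact})\Rightarrow Q$ has closed range''; this is routine, since $Q^{*}Q$ is injective and bounded below on $(\ker Q)^{\perp}$, forcing $\|Qv\|^{2}=\langle Q^{*}Qv,v\rangle\ge c\|v\|^{2}$ there for some $c>0$. The rest runs exactly parallel to \cref{lem:FredholmZero} and to the proof that the diagonal blocks $a,d$ of an element of $\O_{2}(H)$ are Fredholm.
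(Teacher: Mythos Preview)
Your argument is correct. The identity $\1-Q^{*}Q=N^{*}N$ together with compactness of $N^{*}N$ gives finite kernel and closed range, and the $\alpha$-symmetry $\alpha(\ker Q)=\ker Q^{*}$ pins down both the cokernel dimension and the index; each step checks out.

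The paper proceeds differently: it simply observes that the statement is the case $W_{2}^{+}=L^{+}$ of \cref{lem:FredholmZero}, and that nothing in that lemma singles out $L^{+}$ beyond the relation $\sim$, so transitivity (\cref{ex:EquivalenceRelation}) lets one replace the base point $L^{+}$ by $W_{2}^{+}$. Your route is more direct and self-contained: you bypass the $\O_{2}(H)$ block decomposition used in \cref{lem:FredholmZero} and work purely with the projections $Q,N$, so you never invoke transitivity of $\sim$ or the existence of a $u\in\O_{2}(H)$ realizing $W_{1}^{+}$. In effect, your proof is a cleaner rewrite of the \emph{mechanism} behind \cref{lem:FredholmZero} applied directly to the pair $(W_{1}^{+},W_{2}^{+})$, whereas the paper reuses the lemma as a black box. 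The paper's version is shorter given the lemma; yours stands on its own and would in fact prove \cref{lem:FredholmZero} as the special case $W_{2}^{+}=L^{+}$.
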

\begin{proof}
    This follows from \cref{lem:FredholmZero} together with the transitivity of the relation $\sim$, see \cref{ex:EquivalenceRelation}.
\end{proof}
\begin{remark}
\cref{lem:FredholmZero} should be compared to \cref{pr:CompatInequality}. In that case, the projection operator is not just Fredholm, but even invertible.

Moreover, \cref{lem:FredholmZero} should be compared to e.g.~\cite[Prop.~6.2.4]{pressley_loop_2003}; there, any index can occur.
\end{remark}

We define
\begin{equation*}
    \mc{O}_{S} \defeq \{ \mathrm{graph}(Z) \in \Pol_{2}^{g}(H) \mid Z \in \mc{B}_{2}^{\alpha}(W_{S},W_{S}^{\perp}) \}.
\end{equation*}
The following result tells us that the sets $\mc{O}_{S}$ cover $\Pol_{2}^{g}(H)$, (see also \cite[Section 4]{borthwick_pfaffian_1992}, and c.f.~\cite[Proposition 7.1.6]{pressley_loop_2003}).
\begin{lemma}\label{lem:OpenCoverPolg}
    Any element of $\Pol_{2}^{g}(H)$ can be written as the graph of some operator $Z \in \HS^{\alpha}(W_{S},W_{S}^{\perp})$.
\end{lemma}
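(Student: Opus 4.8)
The plan is to show that the given $W^{+}\in\Pol_{2}^{g}(H)$ lies in some chart $\mc{O}_{S}$, i.e.\ to produce a finite set $S$ of positive integers for which the orthogonal projection $\pi_{S}\colon W^{+}\to W_{S}$ along $W_{S}^{\perp}$ is a bijection. Granting this, the operator with $W^{+}=\operatorname{graph}(Z)$ is $Z=\rho\circ\pi_{S}^{-1}$, where $\rho\colon W^{+}\to W_{S}^{\perp}$ is the orthogonal projection, and one checks $Z\in\HS^{\alpha}(W_{S},W_{S}^{\perp})$ for free: $\rho$ is Hilbert--Schmidt because $W^{+}\sim W_{S}$ (both are equivalent to $L^{+}$, by the symmetry and transitivity of $\sim$ from \cref{ex:EquivalenceRelation}), so $Z=\rho\pi_{S}^{-1}$ is Hilbert--Schmidt; and $\alpha Z\alpha=-Z^{*}$ follows from $\operatorname{graph}(Z)=W^{+}\in\Pol^{g}(H)$ via the computation preceding \cref{ex:PolarCondition}, applied with $W_{S}$ and $W_{S}^{\perp}=\alpha(W_{S})$ in the roles of $L^{+}$ and $L^{-}$. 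So the whole problem reduces to finding a good $S$.

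First I would reduce bijectivity of $\pi_{S}$ to injectivity: for every finite $S$ we have $W^{+}\sim L^{+}\sim W_{S}$, so by \cref{cor:EquivalentFredholmZero} $\pi_{S}$ is Fredholm of index zero, hence bijective as soon as $\ker\pi_{S}=W^{+}\cap W_{S}^{\perp}=\{0\}$. Next I would extract the relevant finite-dimensional data from \cref{lem:FredholmZero} applied to $P:=P^{+}|_{W^{+}}\colon W^{+}\to L^{+}$: the kernel $K=W^{+}\cap L^{-}$ is finite-dimensional, say of dimension $d$; the subspace $C:=\alpha(K)=\alpha(W^{+})\cap L^{+}$ also has dimension $d$; and $\operatorname{Im}(P)=L^{+}\ominus C$. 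Writing $F_{S}^{\pm}$ for the span of $\{e_{\pm k}:k\in S\}$ and letting $P_{C},P_{F_{S}^{\pm}}$ denote the orthogonal projections onto these spaces, we have $W_{S}=(L^{+}\ominus F_{S}^{+})\oplus F_{S}^{-}$, $W_{S}^{\perp}=F_{S}^{+}\oplus(L^{-}\ominus F_{S}^{-})$, and $F_{S}^{-}=\alpha F_{S}^{+}$. Since the vectors $\{P_{C}e_{k}\}_{k\geq1}$ span the $d$-dimensional space $C$, I can choose a $d$-element set $S$ so that $\{P_{C}e_{k}:k\in S\}$ is a basis of $C$; equivalently, $F_{S}^{+}$ is a (not necessarily orthogonal) complement of $\operatorname{Im}(P)$ in $L^{+}$. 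When $d=0$ take $S=\emptyset$.

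To verify that $W^{+}\cap W_{S}^{\perp}=\{0\}$ for this $S$: let $w\in W^{+}\cap W_{S}^{\perp}$ and decompose $w=v+u$ with $v=P^{+}w\in F_{S}^{+}$ and $u=P^{-}w\in L^{-}\ominus F_{S}^{-}$, matching the two summands of $W_{S}^{\perp}$. Since $w\in W^{+}$, $v=Pw\in\operatorname{Im}(P)$; but $F_{S}^{+}\cap\operatorname{Im}(P)=\{0\}$ by the choice of $S$, so $v=0$ and $w=u\in W^{+}\cap L^{-}=K$. On the other hand $P_{C}|_{F_{S}^{+}}$ and $P_{F_{S}^{+}}|_{C}$ are mutually adjoint operators between two $d$-dimensional spaces, hence of equal rank, so $P_{C}|_{F_{S}^{+}}$ being an isomorphism forces $P_{F_{S}^{+}}|_{C}$ to be one; conjugating by $\alpha$ then shows $P_{F_{S}^{-}}|_{K}\colon K\to F_{S}^{-}$ is an isomorphism, in particular $K\cap(L^{-}\ominus F_{S}^{-})=\{0\}$. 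Since $w=u$ lies in that intersection, $w=0$, and we are done.

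The subtle point — and the reason this lemma needs more care than its counterpart \cref{pr:CompatInequality} for positive symplectic polarizations, where the projection $P^{+}|_{W^{+}}$ is already invertible and no chart-choice is needed — is precisely the selection of $S$. It is tempting simply to take $S$ large enough for $F_{S}^{-}$ to capture $K$, but enlarging $S$ past the correct size reintroduces nonzero vectors in $W^{+}\cap W_{S}^{\perp}$ coming from the $\operatorname{Im}(P)$ side (as soon as $F_{S}^{+}\cap\operatorname{Im}(P)\neq\{0\}$). The right requirement is the balanced one, $F_{S}^{+}$ a complement of $\operatorname{Im}(P)$, which forces $|S|=d$; and the small but crucial fact that this single requirement simultaneously settles the kernel side — through the rank identity for $P_{C}|_{F_{S}^{+}}$ and its adjoint — is what makes the two transversality conditions hold at once.
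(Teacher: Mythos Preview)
Your argument is correct and takes a genuinely different route from the paper. The paper proceeds inductively: starting from $S=\emptyset$, it picks a nonzero $v$ in $E_{\emptyset}=\ker(P_{\emptyset})$, chooses an index $l$ with $\langle v,e_{-l}\rangle\neq0$, and shows via an orthogonality computation using $\alpha(x)\perp v$ that $E_{\{l\}}\subsetneq E_{\emptyset}$; iterating produces a finite $S$ with $\ker P_{S}=0$. You instead select $S$ in one stroke, of size exactly $d=\dim K$, by demanding that $\{P_{C}e_{k}:k\in S\}$ be a basis of $C$, i.e.\ that $F_{S}^{+}$ be a linear complement of $\operatorname{Im}(P)$ in $L^{+}$. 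Your verification then splits cleanly: the transversality $F_{S}^{+}\cap\operatorname{Im}(P)=\{0\}$ kills the $L^{+}$-part of any $w\in W^{+}\cap W_{S}^{\perp}$, and the adjoint/rank trick for $P_{C}|_{F_{S}^{+}}$ and $P_{F_{S}^{+}}|_{C}$, transported by $\alpha$, kills the $L^{-}$-part. Your approach is more conceptual---it identifies precisely the linear-algebraic condition $S$ must satisfy and handles both obstructions simultaneously---while the paper's induction is more hands-on and closer in spirit to the Pressley--Segal treatment it cites. Both yield $|S|=d$; your closing remark about why neither under- nor over-shooting works is a nice observation that the inductive proof leaves implicit.
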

\begin{proof}
    Let $W^{+} \in \Pol_{2}^{g}(H)$ be arbitrary.
    We claim that there exists a finite subset $S$ of the positive integers, such that the projection operator $P_{S}:W^{+} \rightarrow W_{S}$ is an isomorphism.
    Once we have found such an $S$, it follows that $W^{+} = \mathrm{graph}(P^{\perp}_{S} P_{S}^{-1})$, where $P_{S}^{\perp}$ is the orthogonal projection of $W^{+}$ onto $W_{S}^{\perp}$, (c.f.~\cref{exer:VerifyGraphGivesSubspace}).
    The operator $P_{S}^{\perp}P_{S}^{-1}$ is Hilbert-Schmidt, because $P_{S}^{\perp}$ is.

    Now, let $E_{\emptyset} \subset W^{+}$ be the kernel of the projection operator $P: W^{+} \rightarrow L^{+} = W_{\emptyset}$.
    Let $d < \infty$ be its dimension.
    If $d=0$, it follows that $P$ is an isomorphism, because it is Fredholm of index $0$ by \cref{cor:EquivalentFredholmZero}.
    So, assume that $d > 0$, and let $0 \neq v \in E_{\emptyset}$ be a unit vector.
    Choose a positive integer $l$ such that $\langle v, e_{-l} \rangle \neq 0$.
    We now consider the orthogonal projection $P_{\{ l \}}: W^{+} \rightarrow W_{\{l\}}$; and we let $E_{\{ l \}}$ be its kernel.
    We claim that $E_{\{ l \} }$ is a strict subset of $E_{\emptyset}$, and thus that the dimension of $E_{\{ l \} }$ is strictly smaller than the dimension of $E_{\emptyset}$.
    First, observe that $v \notin E_{\{ l \} }$, while $v \in E_{\emptyset}$.
    It thus remains to be shown that $E_{ \{l \} }$ is a subset of $E_{\emptyset}$ in the first place.
    Suppose that $x \in E_{\{ l \} }$, and write $x = \sum_{k} \lambda_{k} e_{k}$.
    The condition that $x \in E_{\{ l \} }$ then implies that
    \begin{equation*}
        \lambda_{k} = 0, \quad k \geqslant 1, k \neq l.
    \end{equation*}
    Now, we have that $v = \sum_{k} \mu_{k} e_{k}$, where $\mu_{k} = 0$ for $k \geqslant 1$, and $\mu_{-l} = \langle v, e_{-l} \rangle \neq 0$.
    We have
    \begin{equation*}
        \alpha(x) = \sum_{k} \overline{\lambda_{-k}} e_{k}.
    \end{equation*}
    We then use that $\alpha(x) \in W^{-} = (W^{+})^{\perp}$ and compute
    \begin{equation*}
        0 = \langle \alpha(x), v \rangle = \sum_{k} \overline{\lambda_{-k}} \mu_{k} = \overline{\lambda_{l}} \mu_{-l}.
    \end{equation*}
    The condition that $\mu_{-l} \neq 0$ then implies that $\lambda_{l} = 0$, whence $x \in E_{\emptyset}$.
    
    By induction (using \cref{cor:EquivalentFredholmZero}) on this dimension, we construct a finite subset of the positive integers $S$, such that $P_{S}:W^{+} \rightarrow W_{S}$ is injective.
    Because this operator is Fredholm of index zero (again by \cref{cor:EquivalentFredholmZero}), it follows that it is an isomorphism, which completes the proof.
\end{proof}

Fix now finite subsets of the positive integers $S_{1}$ and $S_{2}$.
We determine the ``transition functions'' corresponding to the charts $\mc{B}_{2}(W_{S_{1}},W_{S_{1}}^{\perp}) \leftarrow \mc{O}_{S_{1}} \cap \mc{O}_{S_{2}} \rightarrow \mc{B}_{2}(W_{S_{2}},W_{S_{2}}^{\perp})$, following \cite[Proposition 7.1.2]{pressley_loop_2003}.
Let $a,b,c,d$ be operators such that
\begin{equation*}
    \begin{pmatrix}
        a & b \\
        c & d
    \end{pmatrix}: \begin{pmatrix}
        W_{S_{1}} \\
        W_{S_{1}}^{\perp}
    \end{pmatrix} \rightarrow \begin{pmatrix}
        W_{S_{2}} \\
        W_{S_{2}}^{\perp}
    \end{pmatrix}
\end{equation*}
is the identity operator.
A straightforward verification shows that $a$ and $d$ are Fredholm, and that $b$ and $c$ are Hilbert-Schmidt (indeed, even finite-rank).

\begin{proposition} \label{pr:transition_functions_HS_fermion}
    The image of $\mc{O}_{S_{1}} \cap \mc{O}_{S_{2}}$ in $\mc{B}_{2}^{\alpha}(W_{S_{1}},W_{S_{1}}^{\perp})$ consists of those operators $Z_{1}$ such that $a+bZ_{1}$ has a bounded inverse.
    Moreover, the transition function is given by $Z_{1} \mapsto (c+dZ_{1})(a+bZ_{1})^{-1}$.
\end{proposition}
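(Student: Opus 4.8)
The plan is to follow a single polarization through both charts. Fix $W^{+} \in \mc{O}_{S_{1}}$ and write $W^{+} = \mathrm{graph}(Z_{1})$ with $Z_{1} \in \mc{B}_{2}^{\alpha}(W_{S_{1}},W_{S_{1}}^{\perp})$. First I would record that the map $\Psi_{1}: W_{S_{1}} \to W^{+}$, $x \mapsto x + Z_{1}x$, is a bounded linear isomorphism onto $W^{+}$, its inverse being the restriction to $W^{+}$ of the orthogonal projection $H_{\C} \to W_{S_{1}}$. Then, letting $P_{2}: H_{\C} \to W_{S_{2}}$ and $P_{2}^{\perp}: H_{\C}\to W_{S_{2}}^{\perp}$ be the orthogonal projections, I would decompose the vector $x + Z_{1}x$ along $H_{\C} = W_{S_{2}} \oplus W_{S_{2}}^{\perp}$ by means of the block description of the identity operator; this reads off $P_{2}\Psi_{1} = a + bZ_{1}$ and $P_{2}^{\perp}\Psi_{1} = c + dZ_{1}$ as operators $W_{S_{1}} \to W_{S_{2}}$ and $W_{S_{1}} \to W_{S_{2}}^{\perp}$ respectively.

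Next I would invoke the graph criterion already used in \cref{exer:VerifyGraphGivesSubspace} and in the proof of \cref{lem:OpenCoverPolg}: an element $W^{+} \in \Pol_{2}^{g}(H)$ lies in $\mc{O}_{S_{2}}$ if and only if the orthogonal projection $P_{2}|_{W^{+}}: W^{+} \to W_{S_{2}}$ is a bounded linear isomorphism, and in that case $W^{+} = \mathrm{graph}(Z_{2})$ with $Z_{2} = P_{2}^{\perp}|_{W^{+}}\,(P_{2}|_{W^{+}})^{-1}$. Because $\Psi_{1}$ is an isomorphism onto $W^{+}$, the projection $P_{2}|_{W^{+}}$ is invertible exactly when $P_{2}|_{W^{+}}\circ\Psi_{1} = a + bZ_{1}$ is invertible as a map $W_{S_{1}} \to W_{S_{2}}$; this establishes the description of the image of $\mc{O}_{S_{1}} \cap \mc{O}_{S_{2}}$.

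For the transition function itself, assuming $a + bZ_{1}$ invertible, I would note that the element of $W^{+}$ lying over a given $y \in W_{S_{2}}$ is $\Psi_{1}\bigl((a+bZ_{1})^{-1}y\bigr)$, whose $W_{S_{2}}^{\perp}$-component is $(c+dZ_{1})(a+bZ_{1})^{-1}y$, so $Z_{2} = (c+dZ_{1})(a+bZ_{1})^{-1}$. To finish I would check that $Z_{2}$ really lands in $\mc{B}_{2}^{\alpha}(W_{S_{2}},W_{S_{2}}^{\perp})$: it is Hilbert--Schmidt since $c$ is finite rank and $dZ_{1}$ is a bounded operator composed with a Hilbert--Schmidt one, so $c+dZ_{1}$ is Hilbert--Schmidt and post-composition with the bounded operator $(a+bZ_{1})^{-1}$ preserves this; and it satisfies $\alpha Z_{2}\alpha = -Z_{2}^{*}$ automatically, because $\mathrm{graph}(Z_{2}) = W^{+} \in \Pol^{g}(H)$ and a graph of an operator $W_{S_{2}}\to W_{S_{2}}^{\perp}$ lies in $\Pol^{g}(H)$ precisely when that identity holds (the computation preceding the definition of $\mc{O}_{S}$).

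There is no deep obstacle here: the content is the ``identity in block form'' bookkeeping together with the graph criterion, both essentially already in place. The points that deserve a sentence of care are (i) confirming that the block entries have the stated operator-theoretic type --- $b,c$ finite rank and $a,d$ Fredholm --- so that $a + bZ_{1}$ is a finite-rank perturbation of the Fredholm operator $a$ and $Z_{2}$ is genuinely Hilbert--Schmidt, and (ii) verifying that the $\alpha$-symmetry of $Z_{2}$ comes for free from $W^{+} \in \Pol^{g}(H)$ rather than having to be imposed separately.
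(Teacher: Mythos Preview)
Your proposal is correct and follows essentially the same approach as the paper: your isomorphism $\Psi_{1}$ is precisely the paper's $P_{1}^{-1}$, and both arguments identify $a+bZ_{1}$ as $P_{2}\circ P_{1}^{-1}$ and read off the transition map by decomposing $(x,Z_{1}x)$ against $W_{S_{2}}\oplus W_{S_{2}}^{\perp}$. Your version is in fact slightly more thorough, since you make explicit the verification that $Z_{2}$ lands in $\mc{B}_{2}^{\alpha}(W_{S_{2}},W_{S_{2}}^{\perp})$ (the $\alpha$-symmetry, in particular), which the paper leaves implicit.
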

\begin{proof}
    Let $W^{+} \in \mc{O}_{S_{1}} \cap \mc{O}_{S_{2}}$, we then have $\mathrm{graph}(Z_{1}) = W^{+} = \mathrm{graph}(Z_{2})$, for some $Z_{i} \in \mc{B}_{2}^{\alpha}(W_{S_{i}},W_{S_{i}}^{\perp})$.
    The orthogonal projection $P_{i}:W^{+} \rightarrow W_{S_{i}}$ is invertible, with inverse $x \mapsto (x,Z_{i}x)$.
    This implies that the composition $P_{2}P_{1}^{-1}: W_{S_{1}} \rightarrow W_{S_{2}}$ is invertible.
    The calculation
    \begin{align*}
        P_{2}P_{1}^{-1}x = P_{2}(x, Z_{1}x) = ax + bZ_{1}x
    \end{align*}
    shows that $P_{2}P_{1}^{-1} = a + bZ_{1}$.
    
    On the other hand, if $Z_{1} \in \mc{B}_{2}^{\alpha}(W_{S_{1}},W_{S_{1}}^{\perp})$ is such that $a+bZ_{1}$ has a bounded inverse, then we consider the operator $(c+dZ_{1})(a+bZ_{1})^{-1}:W_{S_{2}} \rightarrow W_{S_{2}}^{\perp}$.
    This operator is Hilbert-Schmidt, because both $c$ and $Z_{1}$ are Hilbert-Schmidt.
    Now, let $y \in W_{S_{2}}$ be arbitrary.
    Set $x = (a+bZ_{1})^{-1}y \in W_{S_{1}}$.
    We then compute
    \begin{equation*}
        \begin{pmatrix}
            y \\
            (c+dZ_{1})(a+bZ_{1})^{-1}y
        \end{pmatrix} = \begin{pmatrix}
            (a+bZ_{1})x \\
            (c+dZ_{1})x
        \end{pmatrix} = \begin{pmatrix}
            a & b \\
            c & d
        \end{pmatrix} \begin{pmatrix}
            x \\
            Z_{1}x
        \end{pmatrix}.
    \end{equation*}
    This proves that the graph of $Z_{1}$ is equal to the graph of $(c+dZ_{1})(a+bZ_{1})^{-1}$.
\end{proof}

To prove that the transition functions form a complex atlas, we need an elementary lemma. 
\begin{lemma} \label{le:control_inverse}
    Let $A$ and $A'$ be bounded operators on a Banach space, and assume that $A$ is invertible.
    If $\| A - A' \|<1/ (2 \| A^{-1} \|)$ then $A'$ is invertible and 
    \[ \|(A')^{-1} \| \leq 2 \| A^{-1} \|. \] 
\end{lemma}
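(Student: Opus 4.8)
The plan is to exhibit $(A')^{-1}$ explicitly as a Neumann series. First I would write $A' = A - (A - A') = A\bigl(\1 - A^{-1}(A - A')\bigr)$, so that it suffices to invert the factor $\1 - A^{-1}(A-A')$. Set $E = A^{-1}(A - A')$; then $\|E\| \leq \|A^{-1}\|\,\|A - A'\| < \|A^{-1}\|\cdot \tfrac{1}{2\|A^{-1}\|} = \tfrac12 < 1$. By the standard Neumann series argument, $\1 - E$ is invertible with $(\1-E)^{-1} = \sum_{k \geq 0} E^{k}$, and this series converges in operator norm on the Banach space of bounded operators (which is complete), with
\[
    \bigl\|(\1-E)^{-1}\bigr\| \leq \sum_{k \geq 0} \|E\|^{k} = \frac{1}{1 - \|E\|} \leq \frac{1}{1 - \tfrac12} = 2.
\]

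Next I would assemble the conclusion: since $A$ is invertible by hypothesis and $\1 - E$ is invertible by the above, their product $A' = A(\1 - E)$ is invertible, with $(A')^{-1} = (\1 - E)^{-1} A^{-1}$. Taking norms and using submultiplicativity,
\[
    \bigl\|(A')^{-1}\bigr\| \leq \bigl\|(\1-E)^{-1}\bigr\|\,\|A^{-1}\| \leq 2\|A^{-1}\|,
\]
which is exactly the asserted bound.

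There is essentially no obstacle here: the only point requiring any care is making sure the Neumann series is invoked on the correct operator (I factor $A$ out on the left so that the small perturbation appears as $A^{-1}(A-A')$ rather than $(A-A')A^{-1}$, though either works), and that the hypothesis $\|A - A'\| < 1/(2\|A^{-1}\|)$ is used to get $\|E\| < 1/2$, which in turn yields the clean constant $2$ in the final estimate. This lemma is elementary and will be used in the subsequent verification that the transition functions of the atlas on $\Pol_{2}^{g}(H)$ depend holomorphically on their argument, since invertibility of $a + bZ_{1}$ is an open condition controlled quantitatively by this estimate.
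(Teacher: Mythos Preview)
Your proof is correct and essentially identical to the paper's: both write $A'$ as $A$ times a perturbation of the identity, bound the perturbation by $1/2$ using the hypothesis, and invoke the Neumann series to get the factor of $2$. The only cosmetic difference is that you factor $A$ on the left (working with $A^{-1}(A-A')$) while the paper factors on the right (working with $(A'-A)A^{-1}$), which as you already note makes no difference.
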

\begin{proof}
    Since $\| (A'-A) A^{-1} \| <1/2$, the Neumann series provides an inverse for $I + (A'-A)A^{-1}$ and   
    \[  \left\| \left[ I + (A'- A)A^{-1} \right]^{-1} \right\| \leq \frac{1}{1-\| (A'-A) A^{-1} \|} <2. \]
    Thus 
    \[ (A')^{-1} = A^{-1} \left( I + (A'- A)A^{-1} \right)^{-1}  \]
    and the claim follows directly. 
\end{proof}

\begin{theorem} The transition functions in Proposition \ref{pr:transition_functions_HS_fermion} are holomorphic.  
\end{theorem}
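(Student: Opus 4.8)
The plan is to exhibit the transition function $Z_{1} \mapsto (c+dZ_{1})(a+bZ_{1})^{-1}$ as a composition of maps each of which is holomorphic between open subsets of complex Banach spaces, and to invoke the standard fact that holomorphy (in the sense of Fr\'echet differentiability with complex-linear derivative, equivalently local expansion in a convergent power series) is preserved under composition. The three ingredients are: affine maps are holomorphic; a bounded bilinear map is holomorphic; and operator inversion is holomorphic on the open set of invertible operators. The only genuinely non-formal point is the holomorphy of inversion, together with the bookkeeping that keeps us in the correct \emph{Hilbert--Schmidt} space of operators (with its Hilbert--Schmidt topology) rather than merely in bounded operators.

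First I would fix the domains and codomains. Since $a$ and $d$ are bounded and $b$, $c$ are Hilbert--Schmidt, the map $Z_{1} \mapsto a+bZ_{1}$ is affine, hence holomorphic, from $\mc{B}_{2}^{\alpha}(W_{S_{1}},W_{S_{1}}^{\perp})$ into $\mc{B}(W_{S_{2}},W_{S_{1}})$ (reading operators as acting from $W_{S_{1}}$ to $W_{S_{2}}$), and $Z_{1} \mapsto c+dZ_{1}$ is affine from $\mc{B}_{2}^{\alpha}(W_{S_{1}},W_{S_{1}}^{\perp})$ into $\mc{B}_{2}(W_{S_{1}},W_{S_{2}}^{\perp})$, the target being Hilbert--Schmidt because $Z_{1}$ is Hilbert--Schmidt and $d$ is bounded while $c$ is Hilbert--Schmidt. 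By \cref{pr:transition_functions_HS_fermion} the domain of the transition function is $\{Z_{1} : a+bZ_{1} \text{ is invertible}\}$; since the invertible operators form an open subset of $\mc{B}(W_{S_{1}},W_{S_{2}})$ and $Z_{1}\mapsto a+bZ_{1}$ is continuous, this domain is open in $\mc{B}_{2}^{\alpha}(W_{S_{1}},W_{S_{1}}^{\perp})$, as is needed for the notion of holomorphy to make sense.

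Next I would check that inversion $T \mapsto T^{-1}$ is holomorphic on the open set of invertible bounded operators: around an invertible $T_{0}$, \cref{le:control_inverse} provides a ball on which every $T$ is invertible, and on it the Neumann series gives $T^{-1} = \sum_{n\geq 0}\bigl(-T_{0}^{-1}(T-T_{0})\bigr)^{n}T_{0}^{-1}$, a power series in $T-T_{0}$ converging in operator norm; hence inversion is holomorphic, and so $Z_{1}\mapsto (a+bZ_{1})^{-1}$ is a holomorphic map into $\mc{B}(W_{S_{2}},W_{S_{1}})$. Finally, operator composition $\mc{B}_{2}(W_{S_{1}},W_{S_{2}}^{\perp})\times \mc{B}(W_{S_{2}},W_{S_{1}})\to \mc{B}_{2}(W_{S_{2}},W_{S_{2}}^{\perp})$, $(S,T)\mapsto ST$, is bounded bilinear by the Hilbert--Schmidt ideal estimate $\|ST\|_{2}\leq \|S\|_{2}\|T\|$, hence holomorphic; composing it with the two holomorphic maps constructed above shows $Z_{1}\mapsto (c+dZ_{1})(a+bZ_{1})^{-1}$ is holomorphic, with values in $\mc{B}_{2}^{\alpha}(W_{S_{2}},W_{S_{2}}^{\perp})$ by \cref{pr:transition_functions_HS_fermion}. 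The main (mild) obstacle is simply organizing these observations and the ideal estimate so that every intermediate map lands in the right space with the right topology; once that is set up the argument is purely formal.
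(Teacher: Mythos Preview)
Your proof is correct and takes a genuinely different, though equally standard, route from the paper. The paper argues directly that $\Psi(Z)=(c+dZ)(a+bZ)^{-1}$ is G\^ateaux holomorphic (by writing down the G\^ateaux derivative) and locally bounded in the Hilbert--Schmidt norm (using \cref{le:control_inverse} to control $\|(a+bZ')^{-1}\|$ near a fixed $Z$), then invokes the classical fact that G\^ateaux holomorphic plus locally bounded implies Fr\'echet holomorphic. You instead factor $\Psi$ as a composition of an affine map, operator inversion (holomorphic by the Neumann series, again via \cref{le:control_inverse}), and a bounded bilinear multiplication, each of which is manifestly holomorphic between the appropriate spaces. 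Your approach is more structural and requires no explicit derivative computation; the paper's approach is more hands-on but self-contained in that it avoids appealing to closure under composition. Both ultimately hinge on the same analytic point, namely the Neumann-series control of the inverse and the ideal estimate $\|ST\|_{HS}\le\|S\|_{HS}\|T\|$ to keep everything in the Hilbert--Schmidt topology. (A minor notational slip: you write $\mc{B}(W_{S_2},W_{S_1})$ for the target of $Z_1\mapsto a+bZ_1$, but the operator maps $W_{S_1}\to W_{S_2}$; your parenthetical makes clear you have the correct map in mind.)
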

\begin{proof}
    Let $\Psi:Z \mapsto (c+dZ)(a+bZ)^{-1}$ be the transition map  in the space of Hilbert-Schmidt operators. It is enough to show that $\Psi$ is G\^ateaux differentiable and locally bounded. Denote the Hilbert-Schmidt norm by $\| \cdot \|_{HS}$ and the usual operator norm by $\| \cdot \|$. Two facts we will use are that $\| AB \|_{HS} \leq \| A \| \| B \|_{HS}$ and $\| A \| \leq \| A \|_{HS}$. 
    
    It is easily seen that the map is G\^ateaux holomorphic with G\^ateaux derivative 
    \[  D \Psi(Z;W) = \left[ dW + (c+d Z) (a+bZ)^{-1} bW \right](a+bZ)^{-1}. \] 
    To see that $\Psi$ is locally bounded, fix $Z$ and assume that 
    \[ \| Z' - Z \|<  \frac{1}{\| b\| \| (a + bZ)^{-1} \|}.  \]
    By Lemma \ref{le:control_inverse} with $A = a + bZ$, $A'=a+bZ'$ we then have
    \[  \left\| \left(a + b Z' \right)^{-1} \right\| \leq \frac{1}{2 \| (a + b Z)^{-1} \|}.   \]
    Thus 
    \[ \left\| (c + dZ')\left(a+bZ'\right)^{-1} \right\|_{HS} \leq \left( \| c \|_{HS} + \| d \| \| Z' \|_{HS} \right) \frac{1}{2 \| (a + b Z)^{-1} \|}   \]
    which proves the claim.
\end{proof}

\begin{remark}
The Grassmannian $\Pol_{2}^{g}(H)$ is a rich geometric object.
For example, it carries a ``Pfaffian line bundle'' \cite{borthwick_pfaffian_1992}.
\end{remark}

\section{Motivation for the restricted Grassmannian}\label{sec:HilbertSchmidt}

In this section, we give a sketch of the representation-theoretic and physical motivation for the the restricted spaces. 

Let $H$ be a real Hilbert space, with inner product $g$.
From this data, one can construct a \cstar-algebra, called the \emph{Clifford \cstar-algebra}.
We now give an overview of this construction, together with some aspects of the corresponding representation theory.
A full account of the theory is given \cite{plymen_spinors_1994} and \cite{jorgensen_bogoliubov_1987}.

The complex Clifford algebra of $H$ is the complex $*$-algebra generated by elements of $H_{\C}$, subject to the condition that $vw+wv = g(v,\alpha w)$ and $v^{*} = \alpha(v)$, for all $v,w \in H_{\C}$.
This algebra can be completed to a \cstar-algebra, which we denote by $\Cl(H)$.
For $W^{+} \in \Pol^{g}(H)$ we define $F_{W^{+}}$ to be the Hilbert completion of the exterior algebra of $W^{+}$:
\begin{equation*}
    F_{W^{+}} \defeq \overline{\bigoplus_{n \geqslant 0} \wedge^{n} W^{+}}.
\end{equation*}
We define a map $\pi_{W^{+}}: H_{\C} \rightarrow \mc{B}(F_{W^{+}})$ by setting
\begin{equation*}
    \pi_{W^{+}}(x) w_{1} \wedge ... \wedge w_{n} = x \wedge w_{1} \wedge ... \wedge w_{n}
\end{equation*}
for $x,w_{1},...,w_{n} \in W^{+}$, and $\pi_{W^{+}}(y) = \pi_{W^{+}}(\alpha(y))^{*}$ for $y \in W^{-}$.
The map $\pi_{W^{+}}$ defined in this way admits a unique extension to a $*$-homomorphism $\pi_{W^{+}}: \Cl(H) \rightarrow \mc{B}(F_{W^{+}})$; this is the Fock representation of $\Cl(H)$ corresponding to $W^{+}$, (\cite[Chapter 2]{plymen_spinors_1994}).

It is then natural to attempt to compare the Fock representations corresponding to two elements $W_{1}^{+},W_{2}^{+} \in \Pol^{g}(H)$.
The following result tells us when two such representations should be viewed as equivalent.
\begin{theorem}\label{thm:FermionEquivalence}
    There exists a unitary operator $u: F_{W_{1}^{+}} \rightarrow F_{W_{2}^{+}}$ with the property that
    \begin{equation*}
        \pi_{W_{1}^{+}}(a) = u^{*}\pi_{W_{2}^{+}}(a)u
    \end{equation*}
    for all $a \in \Cl(H)$ if and only if the orthogonal projection $H \rightarrow \alpha(W_{2}^{+})$ restricts to a Hilbert-Schmidt operator $W_{1}^{+} \rightarrow \alpha(W_{2}^{+})$
\end{theorem}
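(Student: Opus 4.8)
The statement is the Shale--Stinespring equivalence criterion for the Fock (quasi-free) representations of the CAR algebra; the plan is to prove it in a self-contained way by building the intertwiner explicitly out of a \emph{squeezed vacuum vector}, the Hilbert--Schmidt hypothesis entering exactly as the condition that this vector have finite norm. \textbf{Reduction to a standard reference polarization.} By \cref{lem:HomogeneousSets} there is $u\in\O(H)$ with $u(W_2^+) = L^+$. Extended complex-linearly, $u$ is unitary on $(H_\C,g)$ and commutes with $\alpha$, hence it induces a Bogoliubov automorphism $\beta_u$ of $\Cl(H)$ and, for every $W^+\in\Pol^g(H)$, a unitary $\wedge u\colon F_{W^+}\to F_{u(W^+)}$ with $(\wedge u)\,\pi_{W^+}(a)\,(\wedge u)^{-1} = \pi_{u(W^+)}(\beta_u a)$. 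Therefore $\pi_{W_1^+}\cong\pi_{W_2^+}$ as representations of $\Cl(H)$ if and only if $\pi_{u(W_1^+)}\cong\pi_{L^+}$, and, $u$ being unitary, the projection $W_1^+\to\alpha(W_2^+)$ is Hilbert--Schmidt if and only if $u(W_1^+)\to L^-$ is. So we may assume $W_2^+ = L^+$; abbreviate $W := W_1^+$, $F := F_{L^+}$, and write $P^\pm$ for the orthogonal projections onto $L^\pm$.

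\textbf{The ``if'' direction.} Assume $P^-|_W\colon W\to L^-$ is Hilbert--Schmidt, so $W\in\Pol_2^g(H)$. Using \cref{lem:OpenCoverPolg} (and replacing $L^+$ by $W_S$ for a finite set $S$ if necessary, which is the image of $L^+$ under a finite basis permutation $\rho\in\O(H)$ and hence changes the reference polarization only by a Bogoliubov unitary as above) we may further assume $W = \operatorname{graph}(Z)$ with $Z\in\mc{B}_2^\alpha(L^+,L^-)$; recall from \cref{ex:PolarCondition} that this forces $\alpha Z\alpha = -Z^*$. Fix an orthonormal basis $\{e_i\}_{i\geqslant1}$ of $L^+$, set $e_{-i}=\alpha e_i$ and $z_{ij}=\langle e_{-i},Z e_j\rangle$; then $z_{ij}=-z_{ji}$ and $\sum_{i,j}|z_{ij}|^2<\infty$, so $\omega_Z:=\sum_{i<j}z_{ij}\,e_i\wedge e_j\in\wedge^2 L^+$ and
\begin{equation*}
    \Psi := \exp(\omega_Z) = \sum_{n\geqslant0}\tfrac1{n!}\,\omega_Z^{\wedge n}\in F,\qquad \|\Psi\|^2 = {\det}(\1+Z^*Z)^{1/2},
\end{equation*}
which is \emph{finite precisely because $Z^*Z$ is trace class}, i.e.\ because $Z$ is Hilbert--Schmidt. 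A direct computation with wedge and contraction operators (using $z_{ij}=-z_{ji}$) shows $\pi_{L^+}(v)\Psi=0$ for all $v\in W^-=\alpha(W)$. One then verifies that
\begin{equation*}
    u\colon F_W\to F,\qquad w_1\wedge\cdots\wedge w_n\longmapsto \|\Psi\|^{-1}\,\pi_{L^+}(w_1)\cdots\pi_{L^+}(w_n)\,\Psi\qquad(w_i\in W)
\end{equation*}
is well defined because $\{\pi_{L^+}(w):w\in W\}$ anticommutes ($W$ is isotropic for $(v,w)\mapsto g(v,\alpha w)$), is isometric because $\pi_{L^+}(w)^*\Psi=\pi_{L^+}(\alpha w)\Psi=0$ for $w\in W$ together with the CAR relations, is surjective because $\pi_{L^+}(\wedge^\bullet L^+)\Psi$ already spans $F$ (triangularity in the Fock grading, or irreducibility of $\pi_{L^+}$, \cite{plymen_spinors_1994}), and intertwines $\pi_W$ and $\pi_{L^+}$ since this is built in on the generating set $W\cup W^-$ of $\Cl(H)$.

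\textbf{The ``only if'' direction.} Conversely, given an intertwiner $u\colon F_W\to F$, put $\Psi:=u(1)$, a unit vector with $\pi_{L^+}(v)\Psi=0$ for all $v\in W^-$. Writing $v=v_++v_-\in L^+\oplus L^-$ and $\Psi=\sum_n\Psi_n$ with $\Psi_n\in\wedge^n L^+$, these annihilation equations read $v_+\wedge\Psi_{n-1}+\iota_{\alpha v_-}\Psi_{n+1}=0$ for all $n\geqslant0$ and $v\in W^-$. Reading them off degree by degree, starting from the lowest nonzero $\Psi_n$: first $W\cap L^-=\ker(P^+|_W)$ is seen to be finite-dimensional (a nonzero element of $\wedge^n L^+$ cannot be divisible by infinitely many independent vectors), and then, after splitting off the finite top form supported on the corresponding finite-dimensional subspace, $\Psi$ is pinned down to be the exponential $\exp(\omega_Z)$ of the bivector of the operator $Z$ recording the position of $W$ relative to $L^+$. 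Finiteness of $\|\Psi\|$ then forces ${\det}(\1+Z^*Z)<\infty$, i.e.\ $Z$ is Hilbert--Schmidt; undoing the reductions, this is exactly the assertion that the projection $W_1^+\to\alpha(W_2^+)$ is Hilbert--Schmidt.

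\textbf{The main obstacle.} The technical heart is twofold. First, the identity $\|\exp(\omega_Z)\|^2={\det}(\1+Z^*Z)^{1/2}$, which is what converts the analytic Hilbert--Schmidt condition into the convergence of a Fredholm determinant. Second, and harder, the structural claim used in the converse: that \emph{every} vector annihilated by $\pi_{L^+}(W^-)$ is a squeezed vacuum up to a finite-dimensional factor. This needs a careful bookkeeping of the creation/contraction relations together with the index-zero Fredholm facts of \cref{lem:FredholmZero} and \cref{cor:EquivalentFredholmZero}; the finite-dimensional corrections (when $P^+|_W$ is Fredholm but not injective) are handled by the same finite basis permutations used in the reduction. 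By contrast, once signs are fixed, the verifications that $\pi_{L^+}(v)\exp(\omega_Z)=0$ and that $u$ is isometric are routine CAR computations.
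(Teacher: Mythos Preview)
The paper does not prove this theorem: immediately after the statement it says ``A proof of this classical result, together with references to the expansive literature on the subject, can be found in \cite[Chapter 3]{plymen_spinors_1994}.'' The theorem is quoted purely as motivation for the Hilbert--Schmidt condition defining the restricted Grassmannian, not as something the paper establishes.

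Your sketch is the standard Shale--Stinespring argument via the squeezed vacuum $\exp(\omega_Z)$, which is essentially the route taken in the cited reference (and in Araki's earlier treatment). The outline is sound: the reduction to $W_2^+=L^+$ via a Bogoliubov unitary is clean; the further reduction using \cref{lem:OpenCoverPolg} and a finite basis swap in $\O(H)$ is correct (the swap $e_l\leftrightarrow e_{-l}$ commutes with $\alpha$, so it really lies in $\O(H)$); the determinant identity $\|\exp(\omega_Z)\|^2=\det(\1+Z^*Z)^{1/2}$ is the right analytic bridge; and the isometry/surjectivity argument via the CAR relations and irreducibility of $\pi_{L^+}$ is the expected one. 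In the converse, your divisibility observation (``a nonzero element of $\wedge^n L^+$ cannot be divisible by infinitely many independent vectors'') is the right mechanism for bounding $\dim(W\cap L^-)$, and the identification of the annihilated vector as a Gaussian is the crux. You have correctly flagged the two genuinely technical points yourself; as a self-contained proof the ``only if'' paragraph would need substantial expansion, but as an outline there is no wrong turn.
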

A proof of this classical result, together with references to the expansive literature on the subject, can be found in \cite[Chapter 3]{plymen_spinors_1994}.

If $H$ is a real Hilbert space with symplectic form $\omega$, then one defines the Heisenberg (Lie) algebra to be $H \times \R$, equipped with the bracket
\begin{equation*}
    [(v,t),(w,s)] = (0, \omega(v,w)).
\end{equation*}
Given a positive symplectic polarization $H_{\C} = W^{+} \oplus W^{-}$, one obtains a representation of the Heisenberg algebra on the Hilbert completion of the symmetric algebra of $W^{+}$, similar to the construction above.
However, this representation is by \emph{unbounded} operators. In spite of this, the situation is entirely analogous:
If $H$ is finite-dimensional, then all such representations are unitarily equivalent; this is the Stone-von Neumann theorem.
The equivalence problem in the infinite-dimensional case was settled by D.~Shale \cite{shale_linear_1962}.
Indeed, the Stone-von Neumann theorem is superseded by the result that two positive symplectic polarizations $W_{1}^{\pm}$ and $W_{2}^{\pm}$ give unitarily equivalent representations, if and only if they are in the same Hilbert-Schmidt class.
This statement is not quite precise, because we have not explained what a representation by unbounded operators is.
There are several ways to deal with this issue, but this would take us too far afield. 
The reader can find the subject treated in for example \cite{lion_weil_1980,ottesen_infinite_1995,habermann_introduction_2006}.

\section{Sewing and \texorpdfstring{$\mathrm{Diff}(\mathbb{S}^1)$}{Diff(S1)}} \label{se:sewing}

In this section, we return to Example \ref{ex:smooth_example_bosonicH}, and give the Grassmannian of polarizations a geometric interpretation in terms of sewing.  
This sewing interpretation arises in conformal field theory, see \cite{bleuler_definition_1988,huang_two-dimensional_1995}. This section can be treated as an extended example. This example appears in many contexts. 

We first give a summary. Let $\overline{\mathbb{C}}$ denote the Riemann sphere, 
\[  \mathbb{D}_+ = \{z \in \mathbb{C} \,: \, |z|<1 \}, \ \ \ \text{and} \ \ \ \mathbb{D}_-= \{ z \in \overline{\mathbb{C}} \, :\, |z|>1 \} \cup \{ \infty \}.  \]
Every diffeomorphism $\phi \in \mathrm{Diff}(\mathbb{S}^1)$ gives rise to a conformal map $f$ from the unit disk into the sphere, obtained by sewing $\disk_-$ to $\disk_+$ using $\phi$ to identify points on their boundaries.  The image of $f$ is bounded by a smooth Jordan curve representing the seam generated by the choice of $\phi$. If $\phi$ is the identity map, the seam is just the unit circle in the Riemann sphere. 

Now let $\Sigma$ be the complement of the closure of the image of $f$. Recall that $\phi$ induces a composition operator $\mathcal{C}_\phi$ on $\bose$, which is a bounded symplectomorphism. and that $W_\phi = \mathcal{C}_\phi W_+ \in \Pol^\omega(\bose)$. We then have that $W_\phi$ can be interpreted as pull-back by $f$ of the set of boundary values of holomorphic functions on $\Sigma$.  

To fill in this summary, we need a result known as conformal welding. It originated in quasiconformal Teichm\"uller theory in the 1960s (see \cite{lehto_univalent_1987} and references therein), and independently in other contexts including conformal field theory. We give the description in the smooth case because it allows a simpler presentation in terms of more well-known theorems. 

The conformal welding theorem (in the smooth case) says the following. 
\begin{theorem}[smooth conformal welding] Let $\phi \in \mathrm{Diff}(\mathbb{S}^1)$. There are holomorphic one-to-one functions $f:\mathbb{D}_+ \rightarrow \overline{\mathbb{C}}$, $g: \mathbb{D}_- \rightarrow \overline{\mathbb{C}}$, which extend smoothly and bijectively to $\mathbb{S}^1$, such that $f(\mathbb{S}^1)=g(\mathbb{S}^1)$ and $\phi = \left. g^{-1} \circ f \right|_{\mathbb{S}^1}$. 

These are unique up to post-composition by a M\"obius transformation; that is, any other such pair of maps is given by $T \circ f$, $T \circ g$. 
\end{theorem}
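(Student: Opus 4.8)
The plan is to build the welded sphere by solving a Beltrami equation and to read off $f$ and $g$ as restrictions of the resulting quasiconformal homeomorphism; the smoothness of $\phi$ is used only to upgrade everything to smooth regularity at the seam. First I would extend $\phi$ to an orientation-preserving smooth diffeomorphism $F\colon\mathbb{D}_-\to\mathbb{D}_-$ which equals the identity near $\infty$ (possible since $\phi$ and $\phi^{-1}$ are smooth; e.g.\ interpolate radially between $\phi$ at $|z|=1$ and the identity for large $|z|$). Let $\mu$ denote the Beltrami coefficient $\partial_{\bar z}F/\partial_z F$ on $\mathbb{D}_-$ and set $\mu=0$ on $\mathbb{D}_+$; then $\|\mu\|_\infty<1$, $\mu$ vanishes near $\infty$, and $\mu$ is smooth up to $\mathbb{S}^1$ \emph{from each side}. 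Conceptually $\mu$ is exactly the complex structure on $\overline{\mathbb{C}}$ obtained by gluing $\mathbb{D}_-$ to $\mathbb{D}_+$ along $\mathbb{S}^1$ via $\phi$: it agrees with the standard structure on $\mathbb{D}_+$ and with $F_\ast(\text{standard})$ on $\mathbb{D}_-$.

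Next I would invoke the measurable Riemann mapping theorem to obtain a quasiconformal homeomorphism $G\colon\overline{\mathbb{C}}\to\overline{\mathbb{C}}$ with $\partial_{\bar z}G=\mu\,\partial_z G$ (no normalization is needed, since the conclusion allows a M\"obius ambiguity anyway). Put $f=G|_{\mathbb{D}_+}$, which is injective and holomorphic because $\mu\equiv 0$ there, and $g=G\circ F^{-1}|_{\mathbb{D}_-}$. Since $G|_{\mathbb{D}_-}$ and $F$ both solve the Beltrami equation with coefficient $\mu|_{\mathbb{D}_-}$ and $F$ is a homeomorphism, $G\circ F^{-1}$ is holomorphic on $F(\mathbb{D}_-)=\mathbb{D}_-$, and it is injective as a composition of injections. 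The welding identity is then a direct computation: $f(\mathbb{S}^1)=G(\mathbb{S}^1)=g(\mathbb{S}^1)$, and for $z\in\mathbb{S}^1$ one has $g^{-1}(f(z))=F\bigl(G^{-1}(G(z))\bigr)=F(z)=\phi(z)$. (Alternatively, one can first put a complex structure on the abstract glued sphere $X$ via isothermal coordinates near the seam and then apply the uniformization theorem; this is equivalent.)

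For the smooth boundary extension, I would appeal to the boundary regularity theory for the Beltrami equation: because $\mu$ is smooth up to the smooth curve $\mathbb{S}^1$ from each side, $G$ restricted to each closed region $\overline{\mathbb{D}_+}$ and $\overline{\mathbb{D}_-}$ is smooth up to $\mathbb{S}^1$ (this is a Kellogg--Warschawski-type statement; in the smooth category one may simply quote it). Consequently $G(\mathbb{S}^1)$ is a smooth Jordan curve, $f$ extends smoothly and bijectively to $\mathbb{S}^1$, and so does $g=G\circ F^{-1}$ since $F$ is a smooth diffeomorphism up to the boundary. \emph{This regularity step is the main obstacle}, and it is precisely the point at which restricting to smooth $\phi$ pays off, since one avoids the full quasisymmetric welding theory and instead invokes classical boundary-regularity theorems.

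Finally, for uniqueness, suppose $(f_1,g_1)$ is another such pair. Define $T\colon\overline{\mathbb{C}}\to\overline{\mathbb{C}}$ by $T=f_1\circ f^{-1}$ on $f(\overline{\mathbb{D}_+})$ and $T=g_1\circ g^{-1}$ on $g(\overline{\mathbb{D}_-})$. An orientation argument---using that $\phi$ is orientation-preserving, so the two Jordan domains $f_1(\mathbb{D}_+)$ and $g_1(\mathbb{D}_-)$ lie on opposite sides of the curve $f_1(\mathbb{S}^1)=g_1(\mathbb{S}^1)$---shows that $f(\overline{\mathbb{D}_+})$ and $g(\overline{\mathbb{D}_-})$ cover $\overline{\mathbb{C}}$ and meet only along the common smooth boundary curve. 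On that curve the two defining formulas for $T$ agree, because $g_1^{-1}\circ f_1=\phi=g^{-1}\circ f$ on $\mathbb{S}^1$. Hence $T$ is a homeomorphism of $\overline{\mathbb{C}}$ that is holomorphic off the smooth arc $f(\mathbb{S}^1)$; by removability of smooth arcs it is holomorphic on all of $\overline{\mathbb{C}}$, hence a M\"obius transformation, and $f_1=T\circ f$, $g_1=T\circ g$, as claimed.
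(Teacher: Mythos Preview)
Your argument is correct at the same sketch level as the paper's, but it follows a genuinely different route. The paper forms the abstract topological sphere $S=\mathrm{cl}\,\mathbb{D}_+\sqcup\mathrm{cl}\,\mathbb{D}_-/\!\sim$, asserts that $S$ carries a (unique) complex structure compatible with the two halves, invokes uniformization to get a biholomorphism $\Psi:S\to\overline{\mathbb{C}}$, and sets $f=\Psi|_{\mathbb{D}_+}$, $g=\Psi|_{\mathbb{D}_-}$; uniqueness is dispatched in one line by noting that any biholomorphism of $\overline{\mathbb{C}}$ is M\"obius. You instead extend $\phi$ to a diffeomorphism $F$ of $\overline{\mathbb{D}_-}$, manufacture a Beltrami coefficient that is zero on $\mathbb{D}_+$ and equal to $\mu_F$ on $\mathbb{D}_-$, and apply the measurable Riemann mapping theorem; your $f=G|_{\mathbb{D}_+}$ and $g=G\circ F^{-1}$ then do the job, and your uniqueness argument via removability of the smooth seam is a fleshed-out version of the paper's one-liner. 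You even flag the paper's method as your parenthetical ``alternative'' (complex structure on the glued sphere plus uniformization).

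The trade-offs are as follows. The paper's approach is lighter on machinery---no measurable Riemann mapping theorem---but it hides the real work in the unproven assertion that the glued sphere admits a compatible complex structure (one must actually produce a holomorphic chart across the seam, which in the smooth category is where the analysis lives). Your approach is more explicit and has the virtue that it is \emph{exactly} the argument that generalizes to quasisymmetric $\phi$ (the paper acknowledges this in the remark following the theorem, citing Lehto); the price is that you must import the Ahlfors--Bers theorem and then, as you correctly isolate, a Kellogg--Warschawski-type boundary regularity statement to get smoothness of $G$ up to $\mathbb{S}^1$ from each side. Both proofs are honest sketches; yours simply places the black box at a different (and arguably more standard) location.
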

\begin{proof} 
We give a sketch of a proof. Given $\phi \in \mathrm{Diff}(\mathbb{S}^1)$, we treat it as a parametrization of the boundary of the Riemann surface $\mathbb{D}_-$. Sew on $\disk_+$ by identifying points on $\partial \disk_+$ and $\partial \disk_-$ under $\phi$. One then obtains a topological sphere $S = \mathrm{cl} \,\disk_+ \sqcup \mathrm{cl} \, \disk_-/\sim$, where $p \in \partial \disk_+$ is equivalent to $q \in \partial \disk_-$ if and only if $q = \phi(p)$. $S$ can be given a unique complex manifold structure compatible with that on $\disk_+$ and $\disk_-$. By the uniformization theorem, there is a biholomorphism $\Psi:S \rightarrow \sphere$. Set 
\[  f= \left. \Psi \right|_{\disk_+}, \ \ \ g = \left. \Psi \right|_{\disk_-}.  \]
It follows from continuity of $\Psi$ together with the definition of the equivalence relation $\sim$ that $\phi = g^{-1} \circ f$. 

    Uniqueness can be obtained from the fact that any biholomorphism of the Riemann sphere is a M\"obius transformation. 
\end{proof}
\begin{remark}
    In fact, the original version in quasiconformal Teichm\"uller theory was valid more generally for quasisymmetries of $\mathbb{S}^1$ (see Remark \ref{re:universal_Teich_space}). This is the theorem usually referred to as the conformal welding theorem.
\end{remark}

The proof of the smooth conformal welding theorem shows that in general, given a disk $\disk_-$ whose boundary is equipped with a parameterization $\phi \in \mathrm{Diff}(\mathbb{S}^1)$ one can sew on a disk $\disk_+$. This resulting Riemann surface is the Riemann sphere, but the seam is now a smooth Jordan curve $\Gamma$. Let $\Sigma = \Psi(\disk_-)$ denote the copy of $\disk_-$ in $\sphere$; the parameterization $\phi$ now is represented equivalently by the boundary values of $f$, which is a smooth function taking $\mathbb{S}^1$ to $\Gamma=f(\mathbb{S}^1)=g(\mathbb{S}^1)$.  

Without loss of generality, assume that $\infty \in \Sigma$. Let 
\[  \mathcal{D}_\infty(\Sigma) = \left\{ h:\Sigma \rightarrow \mathbb{C} \text{ holomorphic } \,:\, \iint_{\Sigma} |h'|^2 <\infty, \ h(\infty)=0 \right\}   \]
be the Dirichlet space. Given $h \in \mathcal{D}_\infty(\Sigma)$, if we assume that $h$ has a smooth extension to the boundary $\partial \Sigma$, then $h \circ f$ is a smooth function on $\mathbb{S}^1$, and in particular is in $\bose_{\mathbb{C}}$. In fact, it can be shown that $h$ extends to the boundary and $h \circ f$ makes sense for any $h \in \mathcal{D}_\infty(\Sigma)$, and furthermore $h \circ f -h(f(0))$ is in $\bose_{\mathbb{C}}$ \cite{schippers_analysis_2022}.
(In fact this holds for any quasisymmetry $\phi$, cf Remark \ref{re:universal_Teich_space}.)  We then have 
\begin{theorem} For $\phi \in \mathrm{Diff}(\mathbb{S}^1)$, if $W_\phi = \mathcal{C}_\phi L^+$, then 
\begin{equation*}
    W_\phi = f^\pullb \mathcal{D}_\infty(\Sigma)  : = \{ h \circ f -h(f(0)) \,:\, h \in \mathcal{D}_\infty(\Sigma) \}.
\end{equation*}
\end{theorem}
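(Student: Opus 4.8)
The plan is to describe the two sides as concrete spaces of functions on $\mathbb{S}^1$ and identify them using the single boundary relation $f = g\circ\phi$ supplied by conformal welding. Fix the smooth welding maps $f\colon\mathbb{D}_+\to\overline{\mathbb{C}}$ and $g\colon\mathbb{D}_-\to\overline{\mathbb{C}}$ associated with $\phi$, normalised (using the M\"obius freedom in the welding theorem) so that $g$ fixes $\infty$; in particular $\infty\in\Sigma$, where $\Sigma=g(\mathbb{D}_-)$ is the exterior Jordan domain. We regard $\bose_{\mathbb{C}}$ as $\dot{H}^{1/2}(\mathbb{S}^1)$ and $L^+$ as in Example~\ref{ex:bosonExample}.

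First I would record an ``exterior'' description of $L^+$. As a subspace of $\bose_{\mathbb{C}}\simeq\dot{H}^{1/2}(\mathbb{S}^1)$, the space $L^+$ consists of the Fourier series supported on negative frequencies, and the linear map $\sum_{n<0}a_ne^{in\theta}\mapsto\sum_{n<0}a_nz^n$ identifies $L^+$, via boundary values, with the Dirichlet space $\dot{\mathcal{D}}_\infty(\mathbb{D}_-)=\{F\colon\mathbb{D}_-\to\mathbb{C}\text{ holomorphic}:F(\infty)=0,\ \iint_{\mathbb{D}_-}|F'|^2<\infty\}$; the two summability conditions coincide after an elementary computation of $\iint_{\mathbb{D}_-}|F'|^2$ in terms of the Laurent coefficients. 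Since $g\colon\mathbb{D}_-\to\Sigma$ is a biholomorphism fixing $\infty$, conformal invariance of the Dirichlet integral makes $h\mapsto h\circ g$ a bijection $\mathcal{D}_\infty(\Sigma)\to\dot{\mathcal{D}}_\infty(\mathbb{D}_-)$, with $h(\infty)=0$ transported to $(h\circ g)(\infty)=0$. Passing to boundary values --- legitimate since $g$ extends smoothly to $\mathbb{S}^1$ and, by the regularity statement recalled just before the theorem, $h$ extends continuously to $\partial\Sigma$ --- yields
\[
L^+=\bigl\{\,(h\circ g)|_{\mathbb{S}^1}\ :\ h\in\mathcal{D}_\infty(\Sigma)\,\bigr\}
\]
as an equality of subsets of $\bose_{\mathbb{C}}$.

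Next I would apply $\mathcal{C}_\phi$ (Example~\ref{ex:smooth_example_bosonicH}). The welding identity $f=g\circ\phi$ on $\mathbb{S}^1$ gives $\bigl((h\circ g)|_{\mathbb{S}^1}\bigr)\circ\phi=h\circ f$ on $\mathbb{S}^1$; since $\mathcal{C}_\phi$ sends $F$ to $F\circ\phi$ corrected by the unique additive constant that returns it to $\bose_{\mathbb{C}}$,
\[
\mathcal{C}_\phi\bigl((h\circ g)|_{\mathbb{S}^1}\bigr)=h\circ f-c_h,
\]
where $c_h$ is the constant for which $h\circ f-c_h\in\bose_{\mathbb{C}}$. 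On the other hand the cited regularity result asserts precisely that $h\circ f-h(f(0))\in\bose_{\mathbb{C}}$; as $\bose_{\mathbb{C}}$ contains no nonzero constant and $c_h$ and $h(f(0))$ differ by a constant, $c_h=h(f(0))$. Hence $\mathcal{C}_\phi\bigl((h\circ g)|_{\mathbb{S}^1}\bigr)=h\circ f-h(f(0))$, and applying $\mathcal{C}_\phi$ to the set-equality of the previous paragraph gives
\[
W_\phi=\mathcal{C}_\phi L^+=\{\,h\circ f-h(f(0))\ :\ h\in\mathcal{D}_\infty(\Sigma)\,\}=f^{\pullb}\mathcal{D}_\infty(\Sigma),
\]
which is the assertion.

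The only inputs beyond unwinding definitions are the conformal welding theorem (established in the excerpt) and the boundary regularity of $\mathcal{D}_\infty(\Sigma)$ under the welding map (from \cite{schippers_analysis_2022}). The step I expect to need the most care is the bookkeeping of additive constants --- matching the mean-correction implicit in $\mathcal{C}_\phi$ with the normalisation $h(f(0))$ in $f^{\pullb}$ --- together with keeping consistent the two competing ``extension'' conventions (holomorphic extension to $\mathbb{D}_-$ versus anti-holomorphic extension to $\mathbb{D}_+$, and interior versus exterior of the welding curve). The argument above avoids computing the constant at all by exploiting that $\bose_{\mathbb{C}}$ carries no constants, so the constant is forced once both sides are known to lie in it.
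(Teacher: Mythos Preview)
The paper states this theorem without proof; it merely assembles the ingredients (the smooth conformal welding theorem and the boundary-regularity fact from \cite{schippers_analysis_2022} that $h\circ f-h(f(0))\in\bose_{\mathbb C}$) and then records the statement. Your argument is exactly the proof those ingredients are pointing to: identify $L^{+}$ with $\dot{\mathcal D}_\infty(\mathbb D_-)$ by boundary values, transport to $\mathcal D_\infty(\Sigma)$ via the biholomorphism $g$ using conformal invariance of the Dirichlet integral, and then use the welding relation $f=g\circ\phi$ on $\mathbb S^{1}$ to turn precomposition by $\phi$ into $h\mapsto h\circ f$. So your approach is not merely compatible with the paper's---it is the proof the paper's exposition implicitly outlines.

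Two small points worth noting. First, the formula for $\mathcal C_\phi$ in Example~\ref{ex:smooth_example_bosonicH} literally subtracts the mean of $f$ rather than of $f\circ\phi$; since inputs already lie in $\dot H^{1/2}$ this is evidently a typo, and your reading (subtract the unique constant landing the result back in $\bose_{\mathbb C}$) is the intended one. Second, your device for pinning down the constant---observing that $\bose_{\mathbb C}$ contains no nonzero constants, so the two normalisations are forced to agree---is clean and avoids having to interpret $h(f(0))$ directly (note that $f(0)$ lies in the \emph{complement} of $\Sigma$, so ``$h(f(0))$'' really means the value at $f(0)$ of the harmonic extension of $h|_{\partial\Sigma}$ into the interior domain, which by the mean-value property coincides with the zeroth Fourier coefficient of $h\circ f$). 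Your argument handles this correctly without needing to unpack that abuse of notation.
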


More generally, in his sketch of a definition of conformal field theory, Segal \cite{bleuler_definition_1988} considered the category whose objects are Riemann surfaces of genus $g$ with $n$ closed, boundary curves circles endowed with boundary parametrizations $(\phi_1,\ldots,\phi_n)$. After sewing on copies of the disk, one obtains a compact surface $\mathscr{R}$, holomorphic maps $f=(f_1,\ldots,f_n)$ representing the parametrizations, and $\Sigma$ can be identified with the complement of the closures of the images of $f_1(\disk_+),\ldots,f_n(\disk_+)$. The sets $f^*\mathcal{D}(\Sigma)$ play a role in the construction; the boundary parameterizations are a means to obtain Fourier series from the boundary values of elements of $\mathcal{D}(\Sigma)$, and the induced polarizations represent the positive and negative Fourier modes.

The moduli space $\widetilde{\mathcal{M}}(g,n)$ space of surfaces with boundary is as follows.
Two elements $(\Sigma,\phi_1,\ldots,\phi_n)$, $(\Sigma',\phi_1',\ldots,\phi_n')$ are equivalent if there is a conformal map $F:\Sigma \rightarrow \Sigma'$ such that $\phi_k' = F \circ \phi_k$ for $k=1,\ldots,n$. 

If the parameterizations are taken to be smooth in the definition of $\widetilde{M}(g,n)$, then since all type $(0,1)$ surfaces are equivalent to $\disk_-$ we can identify 
\[  \widetilde{M}(0,1) \cong \mathrm{Diff}(\mathbb{S}^1) /\text{M\"ob}(\mathbb{S}^1).    \] 

 The moduli space $\widetilde{\mathcal{M}}(0,1)$, as well as the universal Teichm\"uller space can be ebmedded in the Grassmannian of polarizations $\Pol^\omega(\bose)$. 
To see this, recall from Remark \ref{re:universal_Teich_space} that the stabilizer of $L^+$ in $\mathrm{Diff}(\mathbb{S}^1)$ is the set $\text{M\"ob}(\mathbb{S}^1)$ of M\"obius transformations preserving $\mathbb{S}^1$. This gives the following.
\begin{corollary}
 We have a well-defined injective map 
 \begin{align*}
    \widetilde{M}(0,1) &  \rightarrow \Pol^{\omega}(\bose)\\
    [(\disk_-,\phi)] & \mapsto W_\phi.
 \end{align*}
\end{corollary}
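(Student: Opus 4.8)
The plan is to present the map as the composite of the identification $\widetilde{M}(0,1)\cong\mathrm{Diff}(\mathbb{S}^1)/\text{M\"ob}(\mathbb{S}^1)$ with the injection $[\phi]\mapsto W_\phi$ already obtained in Example~\ref{ex:smooth_example_bosonicH}; well-definedness and injectivity then come for free, and the only real task is to make the identification precise. First I would check that every class in $\widetilde{M}(0,1)$ has a representative $(\disk_-,\phi)$ with $\phi\in\mathrm{Diff}(\mathbb{S}^1)$. Given a representative $(\Sigma,\psi)$ — so $\Sigma$ is a genus-$0$ bordered Riemann surface with one boundary circle and $\psi\colon\mathbb{S}^1\to\partial\Sigma$ a smooth parametrization — its interior is simply connected and hyperbolic, so by uniformization it is biholomorphic to $\disk_-$; because $\partial\Sigma$ is a smooth Jordan curve, such a biholomorphism $G$ extends smoothly and bijectively across the boundary, by the same Schwarz-reflection argument used in the proof of the smooth conformal welding theorem above. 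Setting $\phi\defeq(G|_{\partial\Sigma})\circ\psi\in\mathrm{Diff}(\mathbb{S}^1)$, the map $G$ is a conformal equivalence $(\Sigma,\psi)\to(\disk_-,\phi)$, so $[(\Sigma,\psi)]=[(\disk_-,\phi)]$.

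Next I would identify the equivalence relation. A different choice of uniformizing map replaces $G$ by $F\circ G$ with $F\in\Aut(\disk_-)$, hence replaces $\phi$ by post-composition with an element of $\text{M\"ob}(\mathbb{S}^1)$, since $\Aut(\disk_-)$ restricted to $\mathbb{S}^1$ is exactly $\text{M\"ob}(\mathbb{S}^1)$; likewise $(\disk_-,\phi)\sim(\disk_-,\phi')$ in $\widetilde{M}(0,1)$ precisely when $\phi'=m\circ\phi$ for some $m\in\text{M\"ob}(\mathbb{S}^1)$. Therefore $[(\Sigma,\psi)]\mapsto[\phi]$ is a well-defined bijection $\widetilde{M}(0,1)\to\mathrm{Diff}(\mathbb{S}^1)/\text{M\"ob}(\mathbb{S}^1)$, with inverse $[\phi]\mapsto[(\disk_-,\phi)]$. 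By Remark~\ref{re:universal_Teich_space} and Example~\ref{ex:smooth_example_bosonicH}, $\text{M\"ob}(\mathbb{S}^1)$ is exactly the stabilizer of $L^+$ for the action $\phi\mapsto\mathcal{C}_\phi L^+$, and since $\mathcal{C}_{\phi_1\circ\phi_2}=\mathcal{C}_{\phi_2}\mathcal{C}_{\phi_1}$ one checks at once that $W_\phi=\mathcal{C}_\phi L^+$ depends only on the coset of $\phi$. Hence precomposing the injection of Example~\ref{ex:smooth_example_bosonicH} with the bijection above yields exactly the map $[(\disk_-,\phi)]\mapsto W_\phi$ of the statement, and it is well defined and injective because that injection is.

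The only step carrying genuine analytic content is the boundary regularity of the uniformizing map $G$: this is what confines us to the smooth category, and it is supplied verbatim by the conformal welding argument already given. Everything else is bookkeeping with cosets, and the needed injectivity of $\phi\mapsto W_\phi$ on cosets was recorded in Example~\ref{ex:smooth_example_bosonicH}, so I expect no real difficulty beyond assembling these pieces carefully.
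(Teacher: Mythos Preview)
Your proposal is correct and follows exactly the route the paper takes: the paper does not give a formal proof of this corollary but states it as an immediate consequence of the identification $\widetilde{M}(0,1)\cong\mathrm{Diff}(\mathbb{S}^1)/\text{M\"ob}(\mathbb{S}^1)$ (justified by the remark that all type $(0,1)$ surfaces are equivalent to $\disk_-$) together with the fact, recalled from Remark~\ref{re:universal_Teich_space}, that $\text{M\"ob}(\mathbb{S}^1)$ is the stabilizer of $L^+$. Your write-up simply fleshes out those two sentences, including the coset bookkeeping via $\mathcal{C}_{\phi_1\circ\phi_2}=\mathcal{C}_{\phi_2}\mathcal{C}_{\phi_1}$, and invokes the injection already recorded in Example~\ref{ex:smooth_example_bosonicH}; nothing is missing and nothing diverges from the paper's argument.
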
 

The universal Teichm\"uller space $T(0,1)$ is a moduli space containing the Teichm\"uller spaces of all Riemann surfaces covered by the disk.
It is classically known to be modelled by $\text{QS}(\mathbb{S}^1)/\text{M\"ob}(\mathbb{S}^1)$. The embedding in Remark \ref{re:universal_Teich_space} is as follows. 
\begin{theorem} 
    There is a well-defined injective map \cite{nag_teichmuller_1995} from $T(0,1)=\text{QS}(\mathbb{S}^1)/\text{M\"ob}(\mathbb{S}^1)$ to $\Pol^\omega(\bose)$. 
 \begin{align*}
    T(0,1) &  \rightarrow \Pol^{\omega}(\bose)\\
    [\phi] & \mapsto W_\phi.
 \end{align*}
 This map is holomorphic with respect to the classical Banach manifold structures on $T(0,1)$ and $\mathcal{D}(\bose)$ \cite[Theorem B1]{takhtajan_weil-petersson_2006}.
\end{theorem}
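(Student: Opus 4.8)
The plan is to run exactly the argument of \cref{ex:smooth_example_bosonicH} and \cref{re:universal_Teich_space}, replacing ``$\mathrm{Diff}(\mathbb{S}^1)$'' by ``$\mathrm{QS}(\mathbb{S}^1)$'' throughout, and then to reduce the holomorphicity assertion to the theorem of Takhtajan--Teo. The three facts I would take as given are: (i) for $\phi\in\mathrm{QS}(\mathbb{S}^1)$ the composition operator $\mathcal{C}_\phi$ of \cref{ex:smooth_example_bosonicH} is a bounded symplectomorphism of $\bose$ with bounded inverse $\mathcal{C}_{\phi^{-1}}$ (the Nag--Sullivan/Vodopy'anov theorem recalled in \cref{re:universal_Teich_space}); (ii) $\phi\mapsto\mathcal{C}_\phi$ is a contravariant homomorphism, $\mathcal{C}_{\phi\circ\psi}=\mathcal{C}_\psi\mathcal{C}_\phi$, read off the defining formula; and (iii) the stabilizer of $L^+\in\Pol^\omega(\bose)$ under the $\Sp(\bose)$-action is the unitary subgroup, which by \cref{re:universal_Teich_space} (cf.\ \cref{exer:stabilizer}) is $\mathrm{M\ddot{o}b}(\mathbb{S}^1)$. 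The easy inclusion in (iii) is direct: a Möbius automorphism $m$ of $\mathbb{D}_+$ preserves holomorphicity and, by conformal invariance of the Dirichlet integral, the Dirichlet class, so $\mathcal{C}_m$ fixes $L^-$ and hence $L^+=\alpha(L^-)$; the reverse inclusion is the classical hard direction, which I would quote.

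Granting (i)--(iii), well-definedness and injectivity are formal. By (i) and the $\Sp(\bose)$-action on $\Pol^\omega(\bose)$ established earlier, $W_\phi=\mathcal{C}_\phi L^+\in\Pol^\omega(\bose)$. If $[\phi_1]=[\phi_2]$ in $T(0,1)$, i.e.\ $\phi_2=m\circ\phi_1$ with $m\in\mathrm{M\ddot{o}b}(\mathbb{S}^1)$, then (ii) gives $\mathcal{C}_{\phi_2}=\mathcal{C}_{\phi_1}\mathcal{C}_m$, so $W_{\phi_2}=\mathcal{C}_{\phi_1}\mathcal{C}_m L^+=\mathcal{C}_{\phi_1}L^+=W_{\phi_1}$ by (iii); hence $[\phi]\mapsto W_\phi$ descends to $T(0,1)=\mathrm{QS}(\mathbb{S}^1)/\mathrm{M\ddot{o}b}(\mathbb{S}^1)$. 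Conversely, $W_{\phi_1}=W_{\phi_2}$ gives $\mathcal{C}_{\phi_1\circ\phi_2^{-1}}L^+=\mathcal{C}_{\phi_2}^{-1}\mathcal{C}_{\phi_1}L^+=L^+$, so $\phi_1\circ\phi_2^{-1}\in\mathrm{M\ddot{o}b}(\mathbb{S}^1)$ by (iii), i.e.\ $[\phi_1]=[\phi_2]$; this is injectivity.

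For holomorphicity I would compose with the biholomorphism $\Pol^\omega(\bose)\to\mathfrak{D}(\bose)$ of \cref{prop:GrassmannianIsSiegelDisk} (using the complex Banach structure on the Siegel disk as a domain in a Banach space, established above) and show that $[\phi]\mapsto Z_\phi$ is holomorphic, where $W_\phi=\mathrm{graph}(Z_\phi)$ and $Z_\phi$ is, by \cref{ex:smooth_example_bosonicH}, the Grunsky operator of the conformal welding map $f_\phi\colon\mathbb{D}_+\to\overline{\mathbb{C}}$. Here I would use the classical description of $T(0,1)$: the projection $M(\mathbb{D}_-)\to T(0,1)$ from the open unit ball $M(\mathbb{D}_-)$ of $L^\infty(\mathbb{D}_-)$ is a holomorphic submersion with local holomorphic sections, so it suffices to verify that $\mu\mapsto Z_{\phi_\mu}$ is holomorphic on $M(\mathbb{D}_-)$. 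By the Ahlfors--Bers theorem the normalized quasiconformal solution $w^\mu$ of the Beltrami equation, hence the conformal map $f_\mu=w^\mu|_{\mathbb{D}_+}$ and all its Taylor coefficients, depend holomorphically on $\mu$; the Grunsky coefficients are universal polynomials in those, and the claim is that they assemble into a holomorphic map into the Banach space underlying $\mathfrak{D}(\bose)$, which one checks via G\^ateaux holomorphy plus local boundedness in operator (indeed Hilbert--Schmidt) norm, exactly as in the proof that the fermionic transition functions are holomorphic. This last estimate --- controlling the Grunsky operator in operator norm along a holomorphic family of quasiconformal deformations --- is the genuine obstacle, and is precisely the content of \cite[Theorem~B1]{takhtajan_weil-petersson_2006}; I would cite it rather than reprove it.
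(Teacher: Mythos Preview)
Your proposal is correct, and in fact goes further than the paper itself. The paper does not supply a proof of this theorem: it is stated as a result from the literature, with the well-definedness and injectivity attributed to Nag--Sullivan \cite{nag_teichmuller_1995} and the holomorphicity to Takhtajan--Teo \cite[Theorem~B1]{takhtajan_weil-petersson_2006}, and no further argument is given. Your sketch for well-definedness and injectivity is exactly the natural extension of the smooth argument in \cref{ex:smooth_example_bosonicH} to the quasisymmetric setting, using the Nag--Sullivan/Vodop'yanov boundedness result quoted in \cref{re:universal_Teich_space}; and for holomorphicity you correctly identify the genuine analytic content and cite the same Takhtajan--Teo theorem the paper does. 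There is nothing to correct.
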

\begin{remark} 
  If one uses quasisymmetric parametrizations in the definition of Segal's moduli space $\widetilde{M}(0,1)$, one then sees that it can be identified naturally with the universal Teichm\"uller space $T(0,1)$.  
    The desirability of this extension to quasisymmetries is strongly motivated by Nag-Sullivan/Vodop'yanov's result that the quasisymmetries produce precisely the composition operators which are bounded symplectomorphisms (see Remark \ref{re:universal_Teich_space}). It is remarkable that the link between these two spaces - a geometric fact - is established by an analytic condition. This analytic condition in turn is motivated by an algebraic requirement; namely, the requirement that the representation of quasisymmetries be bounded. Interestingly, this analytic condition was in place for independent reasons in Teichm\"uller theory decades before the result of Nag-Sullivan/Vodopy'anov.  
\end{remark}
\begin{remark}
The association between the Teichm\"uller space and the moduli space of Segal holds for arbitrary surfaces of genus $g$ with $n$ boundary curves.  In general, the Segal moduli space $\widetilde{M}(g,n)$, if extended to allow quasisymmetric parametrizations, is a quotient of the Teichm\"uller space $T(g,n)$ by a finite-dimensional modular group  \cite{radnell_quasisymmetric_2006}. (This modular group is trivial in the case $(g,n)=(0,1)$).  
\end{remark}

\begin{remark}
The extension of the symplectic action of $\mathrm{Diff}(\mathbb{S}^1)$ to quasisymmetries (resulting in the embedding of universal Teichm\"uller space), was recognized by Nag and Sullivan \cite{nag_teichmuller_1995}. The problem of determining which $\phi$ result generate elements of the restricted Grassmannian has roots in work of Nag and Verjovsky \cite{nag_rm_1990}, who considered the convergence of a natural generalization of the classical Weil-Petersson pairing on the tangent space to the universal Teichm\"uller space. This eventually gave birth to the Weil-Petersson universal Teichm\"uller space, now widely studied.  Takhtajan and Teo  \cite{takhtajan_weil-petersson_2006} showed that the embeddings of the universal Teichm\"uller space and Weil-Petersson Teichm\"uller space into $\Pol^\omega(\bose)$ and $\Pol_2^\omega(\bose)$ are holomorphic. The interpretation in terms of pulling back boundary values of functions in the Dirichlet space requires some justification \cite{schippers_analysis_2022}.  
For the induced representation on symmetric Fock space, see \cite{segal_unitary_1981} in the smooth case. This was extended to the Weil-Petersson class universal Teichm\"uller space by A. Serge'ev \cite{sergeev_lectures_2014}. A quantization procedure on the classical universal Teichm\"uller space is also described there and in references therein. 
\end{remark}

\section{Solutions} \label{se:solutions}

 \noindent {\bf{Exercise} \ref{exer:one_two_then_J_preserves}}. By definition, we have
 \[  g(Jv,Jw)= \omega(Jv,J^2w) = \omega(w,Jv) =g(v,w) \]
 and similarly using Proposition \ref{pr:three_properties_compatible} part 2, 
 \[  \omega(Jv,Jw) = g(-v,Jw) = -\omega(v,J^2w) = \omega(v,w).  \] 

\medskip \noindent  
   {\bf{Exercise} \ref{exer:two_of_three_gives_compat}}.
    \begin{enumerate}[label=\alph*)]
        \item A symplectic form that makes the triple $(g,J,\omega)$ compatible, if it exists, must be given by the equation $\omega(v,w) = g(Jv,w)$.
        It is clear that the induced map $\ph_{\omega}$ is  invertible, because $\ph_{\omega} = \ph_{g} J$.
        One sees that $\omega$ is anti-symmetric if and only if $J$ is skew-adjoint with respect to $g$ by the computation $\omega(v,w) = g(Jv,w) = -g(Jw,v) = \omega(w,v)$.   
        \item In this case, the equation $J = \ph_{g}^{-1} \ph_{\omega}$ defines a complex structure.
        \item We compute $\omega(Jv,w) = -\omega(w,Jv) = -g(w,v) = -g(v,w) = -\omega(v,Jw)$.
    \end{enumerate}

\medskip \noindent {\bf{Exercise} \ref{exer:example_not_inner}}. 
    If $(g,J,\omega)$ is a compatible triple, then replacing $J$ by $-J$ will give an example as required.


\medskip \noindent {\bf{Exercise} \ref{exer:hodge_well_defined_symplectic}}.
    Let $\delta$ be closed and $d\phi$ be exact.  By Stokes' theorem, since $d\delta =0$ 
    \[ \omega(\delta,d\phi) = \frac{1}{\pi} \iint_{\mathscr{R}} \delta \wedge d\phi = - \iint_{\mathscr{R}} d (\phi \delta) =0    \]
    where the final equality is because $\mathscr{R}$ has no boundary. Thus given any closed one-forms $\beta,\gamma$, with harmonic one-form representatives $\hat{\beta},\hat{\gamma}$ such that
    \[ \beta = \hat{\beta} + d\phi, \ \ \gamma = \hat{\gamma}+ d\psi, \]
    we have $\omega(\hat{\beta},d\psi)=\omega(d\phi,\hat{\gamma})=\omega(d\phi,d\psi)=0$ which proves the claim. 

\medskip \noindent {{\bf Exercise  \ref{exer:smooth_symplectic_Hb}}}. 
 One can work in the complex $L^2(\mathbb{S}^1)$ space. 
 The Fourier series of $df_2$ is
 \[ df_2 = \sum_{n \in \Z \setminus \{0\}} i n b_n e^{i n \theta}. \]
 This can be justified (for example) by using the fact that since $df_2$ is smooth, it converges uniformly; therefore, the Fourier series of $f_2$ can be derived from that of $df_2$ by integrating term by term.  Since $f_2$ is real,
 \[ \omega(\{a_n\},\{b_n\} ) = \frac{1}{2\pi } \int f_1 d\overline{f_2} =   \frac{1}{2\pi} \int_0^{2\pi} \left( \sum_{n \in \Z \setminus \{0\}} a_n e^{in \theta} \right) \left( \sum_{m \in \Z \setminus \{0\}} - im \overline{b_m} e^{-im \theta} \right).  \]
 Now using $\overline{b_m}=b_{-m}$ and the fact that $\{ e^{i m \theta} \}_{n \in \Z \setminus \{0 \}}$ is an orthonormal basis proves the claim.
    
\medskip \noindent {\bf{Exercise} \ref{exer:standard}}.
    By Proposition \ref{pr:three_properties_compatible} we have $\omega(v,w)=g(v,w)$.  Thus 
    \begin{align*}
     \omega(x_k,x_l) & = g(x_k,Jx_l) = g(x_k,y_l)=0 \\
     \omega(y_k,y_l) & = g(y_k,J y_l) = - g(y_k,x_l) =0 \\
     \omega(x_k,y_l) & =g(x_k,Jy_l) = -g(x_k,x_l) = - \delta_{kl} 
    \end{align*}
    where $\delta_{kl}$ is the Kronecker delta function. 

\medskip \noindent {\bf{Exercise} \ref{exer:two_models_complexify}}. 
    We first show that $\Psi$ is complex linear:
    \[  \Psi(Jv)=\frac{1}{\sqrt{2}} \left(Jv - i J^2 v \right) = \frac{1}{\sqrt{2}} i \left( v-iJv \right) = i \Psi(v).   \]
    In other words $\Psi$ is complex linear with respect to the complex structure $H_J$. 

    Clearly $\Psi(v)=0$ implies $v=0$. Dimension counting implies that it is surjective.

\medskip \noindent {\bf{Exercise} \ref{ex:Hermitian}}.
  By Exercise \ref{exer:one_two_then_J_preserves} $g(Jv,Jw)=g(v,w)$ and $\omega(Jv,Jw)=\omega(v,w)$ for all $v,w$. We compute
  \begin{align*}
   g_+(\Psi(v),\Psi(w)) & = \frac{1}{2} g_+(v-iJv,w-iJw) \\
   & = \frac{1}{2} \left( g(v,w) + g(iJv,iJw) \right) - \frac{1}{2}  \left( g(v,iJw) + g(iJv,w) \right)\\
   & = \frac{1}{2} \left( g(v,w) + g(Jv,Jw) \right) + \frac{1}{2}  \left( i g(v,Jw) -i g(Jv,w) \right)\\
   & = g(v,w) - i g(Jv,w) = g(v,w) - i \omega(v,w) \\
   & = \langle v,w \rangle.
  \end{align*}  

\medskip \noindent {\bf{Exercise}   \ref{exer:standard_polarization_Hb}}.
  The expression for $\omega$ is just the (implicit) complex linear extension of (\ref{eq:omega_in_Hb}).  Similarly, the expression for $g$ is just the sesquilinear extension of (\ref{eq:g_in_Hb}). We have 
  \[ J \{ b_m \} =    \{ \hat{b}_m \} \]
  where
  \[ \hat{b}_m = \left\{ \begin{array}{rl} -i \, \overline{b_{-m}} & m >0 \\ i \, \overline{b_{-m}} & m < 0. \end{array} \right. \]
  So we compute 
  \begin{align*}
   \omega(\{a_n\},J \alpha \{b_n\}) & = -i \sum_{n=-\infty}^\infty n a_n \hat{b}_{-n} \\
   & = -i \sum_{n=1}^\infty n a_n (i \overline{b_n}) -i \sum_{n=-\infty}^{-1} -i n a_n \overline{b_n} \\
   & = g(\{a_n\},\{b_n \}).
  \end{align*}
  The fact that $L^+ \oplus L^-$ is a direct sum decomposition follows from the definition of $\bose$.
  Assume that $a_n = 0$ for all $n<0$ and $b_n = 0$ for all $n>0$.  Then 
  \[  g(\{a_n\},\{b_n \}) = \sum_{n=-\infty}^\infty n a_n \overline{b_n} = 0. \]
  On the other hand, if $a_n=0,b_n=0$ for all $n<0$ and $b_n = 0$ for all $n<0$, 
  \[ \omega(\{a_n\},\{b_n\}) = \sum_{n=-\infty}^\infty n a_n b_{-n} =0 \]
  and similarly if $a_n=0,b_n=0$ for all $n>0$.  

\medskip \noindent {\bf{Exercise} \ref{exer:Kpol_other_half_pos_def}}.  Let $v,w \in W^-$, and observe that 
\[ i\omega(v,\alpha{w})= \overline{-i\omega(\alpha{v},w)}. \]
Since $\alpha{v},\alpha{w} \in W^+$, the claim follows from the fact that $g$ is positive-definite on $W^+$. 

\medskip \noindent {\bf{Exercise} \ref{exer:unitaries_are_same}}.
    Assume that $u \in U(H)$. Then for all $v,w \in H$ 
    \[  g_J(uv,uw) = g(uv,uw) - i\omega(uv,uw)=g(v,w)-i\omega(v,w) = g_J(u,v).  \]
    Conversely, if $g_J(uv,uw) = g_J(v,w)$
    for all $v,w \in H$, then $u$ must preserve the real and imaginary parts of $g_J$.  So $u \in \O(H) \cap \Sp(H)$.  

\medskip \noindent {\bf{Exercise} \ref{exer:stabilizer}}.
    An operator $u \in \Sp(H)$ preserves $L^{+}$ if and only if it commutes with $J$, which it does if and only if $u \in \O(H)$; whence the stabilizer of $L^{+} \in \Pol^{\omega}(H)$ is $\U(H)$.
    Let $W^{+} \in \Pol^{\omega}(H)$ be arbitrary.
    Pick an orthonormal basis $\{ e_{i} \}$ for $W^{+}$ and observe that $\{ \alpha e_{i} \}$ is an orthonormal basis for $\alpha(W^{+})$.
    Let $u$ be the complex-linear extension of the map that sends $l_{i}$ to $e_{i}$ and $\alpha l_{i}$ to $\alpha e_{i}$.
    It is straightforward to see that $u \in \Sp(H)$, and that $u(L^{+}) = W^{+}$.
    
\medskip \noindent {\bf{Exercise}
\ref{exer:VerifyGraphGivesSubspace}}.
Let $x \in W^{+}$ be arbitrary.
Set $y = P^{+}x$.
We then have $x = (P^{+}x, P^{-}x) = (y, P^{-} (P^{+}|_{W^{+}})^{-1}y)$, thus $x \in \mathrm{graph}(Z)$.
On the other hand, let $y \in L^{+}$ be arbitrary, and set $x = (P^{+}|_{W^{+}})^{-1}$.
We then have $(y,Zy) = (P^{+}x,ZP^{+}x) = (P^{+}x,P^{-}x) = x$.

\medskip \noindent {\bf{Exercise} \ref{exer:holomorphic_one_forms}}.  
The one-forms $\eta_k$ were assumed to be harmonic, so $\beta_k$ are all harmonic. By definition of $Z$, for each $k=1,\ldots,\mathfrak{g}$ there is a holomorphic one-form whose periods agree with those of $\beta_k$. By uniqueness of the harmonic representative $\beta_k$ is holomorphic. 

It remains to be shown that $\beta_k$ are linearly independent. But if 
\[ \sum_{k=1}^{\mathfrak{g}} \lambda_k \beta_k =0, \]
then integrating over each of the curves $a_k$, using the definition of $\eta_k$ we obtain that $\lambda_k=0$ for $k=1,\ldots,\mathfrak{g}$. 

\medskip \noindent {\bf{Exercise} \ref{exer:sympletic_inverse}}. Since $u$ is invertible, we need only show that the expression in equation (\ref{eq:symp_inverse_formula}) is a left inverse for $u$. This follows immediately from equation (\ref{eq:symplectomorphism_identities}). 

\medskip \noindent {\bf{Exercise} \ref{exer:complete_Hilbert_Schmidt_char}}. 
  We compute using Proposition \ref{le:top_left_invertible_symplecto} that
  \[   u^{-1} J u - J = i \left( \begin{array}{cc} a^* a + b^* b -\1 & a^* \alpha b \alpha + b^* \alpha a \alpha  \\ - \alpha b^* \alpha a - \alpha a^* \alpha b & - \alpha ( b^* b + a^* a -\1 ) \alpha \end{array} \right).  \]

  By equation (\ref{eq:symplectomorphism_identities}) we have 
  \[ a^{*}\alpha b \alpha = b^{*} \alpha a \alpha \ \ \text{and} \ \ a^*a - \1 = b^*b.  \]
  Inserting these in the above proves the claim.

\medskip \noindent {\bf{Exercise} \ref{ex:EquivalenceRelation}}.
    We have that $W_{1}^{+} \rightarrow \alpha(W_{1}^{+})$ is the zero map, thus $\sim$ is reflexive.
    Suppose that $W_{1}^{+} \sim W_{2}^{+}$.
    Denote by $P_{i}^{\pm}$ the orthogonal projection $H \rightarrow W_{i}^{\pm}$.
    We have that $\iota_{i}^{\pm} = (P_{i}^{\pm})^{*}: W_{i}^{\pm} \rightarrow H$ is the inclusion.
    This means that the orthogonal projection $W_{1}^{+} \rightarrow \alpha(W_{2}^{+})$ factors as $P_{2}^{-} \iota_{1}^{+}$; by assumption, this operator is Hilbert-Schmidt, thus so is its adjoint $(P_{2}^{-} \iota_{1}^{+})^{*} = P_{1}^{+} \iota_{2}^{-}$.
    Conjugating this operator by $\alpha$, we obtain another Hilbert-Schmidt operator
    \begin{equation*}
        \alpha P_{1}^{+} \iota_{2}^{-} \alpha = \alpha P_{1}^{+} \alpha^{2} \iota_{2}^{-} \alpha = P_{1}^{-} \iota_{2}^{+},
    \end{equation*}
    but the expression on the right-hand side is nothing but the orthogonal projection $W_{2}^{+} \rightarrow \alpha(W_{1}^{+})$, so $W_{2}^{+} \sim W_{1}^{+}$, i.e.~the relation is symmetric.
    Now, suppose that $W_{1}^{+} \sim W_{2}^{+}$ and $W_{2}^{+} \sim W_{3}^{+}$.
    This implies that the operators $P_{2}^{-}\iota_{1}^{+}$ and $P_{3}^{-}\iota_{2}^{+}$ are Hilbert-Schmidt.
    We have
    \begin{equation*}
        P_{3}^{-}\iota_{1}^{+} = P_{3}^{-}(\iota_{2}^{+}P_{2}^{+} + \iota_{2}^{-}P_{2}^{-})\iota_{1}^{+} = P_{3}^{-} \iota_{2}^{+} P_{2}^{+} \iota_{1}^{+} + P_{3}^{-} \iota_{2}^{-} P_{2}^{-} \iota_{1}^{+}.
    \end{equation*}
    It follows that $P_{3}^{-} \iota_{1}^{+}$ is Hilbert-Schmidt, and we are done. 

\medskip \noindent  {\bf{Exercise} \ref{ex:PolarCondition}}.
Indeed, if $(x,y)$ is perpendicular to both $\gra(Z)$ and $\alpha(\gra(Z))$ we have
\begin{align*}
    0&= \langle (x,y), (v, Zv) \rangle + \langle (x,y),(\alpha Zw,\alpha w) \rangle = \langle (x,y), (v, Zv) \rangle + \langle (x,y),(-Z^{*} \alpha w,\alpha w) \rangle \\
    &= \langle x, v - Z^{*} \alpha w \rangle + \langle y, Zv + \alpha w \rangle
\end{align*}
for all $v, w \in L^{+}$.
By setting $v=x$ and $\alpha w = -Zx$ we obtain
\begin{equation*}
    0 = \langle x, x + Z^{*} Z x \rangle = \| x \|^{2} + \| Z x \|^{2}.
\end{equation*}
which implies that $x=0$.
Similarly, one shows that $y=0$.

\small
\bibliographystyle{alphaurlmod}
\bibliography{bibfile}

\end{document}